\documentclass[a4paper,10pt]{article}

\usepackage{microtype}

\usepackage{setspace}

\AtBeginDocument{%
	\setlength{\abovedisplayskip}{7pt plus 2pt minus 2pt}%
	\setlength{\belowdisplayskip}{7pt plus 2pt minus 2pt}%
	\setlength{\abovedisplayshortskip}{5pt plus 2pt}%
	\setlength{\belowdisplayshortskip}{5pt plus 2pt minus 2pt}%
}

\usepackage[
a4paper,			
top=27mm,			
bottom=29mm,		
left=30mm,			
right=30mm,			
headheight=14pt,	%
headsep=8mm,		%
footskip=12mm		%
]{geometry}

\usepackage[T1]{fontenc}

\usepackage[UTF8,scheme=plain,fontset=none]{ctex}

\usepackage{amsmath,mathtools}

\usepackage{amssymb}

\usepackage{amsthm}

\allowdisplaybreaks[4]

\usepackage{graphicx}

\usepackage{subcaption}

\usepackage{booktabs}

\usepackage{array}

\usepackage{enumitem}

\usepackage{xcolor}

\usepackage{titlesec}

\usepackage{fancyhdr}

\usepackage{etoolbox}

\usepackage{xparse}

\usepackage{algorithm}

\usepackage{algorithmicx}

\usepackage{aliascnt} 

\definecolor{PaperLinkBlue}{HTML}{1F4E79}

\usepackage[
unicode=true,			
colorlinks=true,		
linkcolor=PaperLinkBlue,
citecolor=PaperLinkBlue,
urlcolor=PaperLinkBlue,	
pdfborder={0 0 0}		
]{hyperref}

\usepackage[
nameinlink,		
noabbrev,		
capitalize		
]{cleveref}

\usepackage[numbers,sort&compress]{natbib}  

\titleformat{\section}
{\large\bfseries}		
{\thesection.}			
{0.65em}				
{}

\titleformat{\subsection}
{\normalsize\bfseries}
{\thesubsection.}
{0.65em}
{}

\titleformat{\subsubsection}
{\normalsize\itshape}
{\thesubsubsection.}
{0.65em}
{}

\titlespacing*{\section}
{0pt}
{2.2ex plus 0.5ex minus 0.2ex}
{0.9ex}

\titlespacing*{\subsection}
{0pt}
{1.8ex plus 0.4ex minus 0.2ex}
{0.6ex}

\titlespacing*{\subsubsection}
{0pt}
{1.5ex plus 0.3ex minus 0.2ex}
{0.4ex}

\numberwithin{equation}{section}

\theoremstyle{plain}

\newtheorem{theorem}{Theorem}[section]

\newaliascnt{lemma}{theorem}
\newtheorem{lemma}[lemma]{Lemma}
\aliascntresetthe{lemma}

\newaliascnt{proposition}{theorem}
\newtheorem{proposition}[proposition]{Proposition}
\aliascntresetthe{proposition}

\newaliascnt{corollary}{theorem}

\aliascntresetthe{corollary}

\theoremstyle{definition}

\newaliascnt{definition}{theorem}
\newtheorem{definition}[definition]{Definition}
\aliascntresetthe{definition}

\newaliascnt{assumption}{theorem}
\newtheorem{assumption}[assumption]{Assumption}
\aliascntresetthe{assumption}

\newaliascnt{example}{theorem}
\newtheorem{example}[example]{Example}
\aliascntresetthe{example}

\theoremstyle{remark}

\newaliascnt{remark}{theorem}
\newtheorem{remark}[remark]{Remark}
\aliascntresetthe{remark}

\crefname{theorem}{Theorem}{Theorems}
\crefname{lemma}{Lemma}{Lemmas}
\crefname{proposition}{Proposition}{Propositions}
\crefname{corollary}{Corollary}{Corollaries}
\crefname{definition}{Definition}{Definitions}
\crefname{assumption}{Assumption}{Assumptions}
\crefname{example}{Example}{Examples}
\crefname{remark}{Remark}{Remarks}
\crefname{equation}{Equation}{Equations}
\crefname{section}{Section}{Sections}
\crefname{algorithm}{Algorithm}{Algorithms}

\setlist{
	topsep=0.5em,		
	itemsep=0.2em,		
	parsep=0pt,			
	partopsep=0pt,		
}

\setlist[itemize,1]{
	label=\textbullet,
	leftmargin=2em,
	labelsep=0.6em
}

\setlist[itemize,2]{
	label=\textendash,
	leftmargin=2em,
	labelsep=0.6em
}

\setlist[itemize,3]{
	label=\(\circ\),
	leftmargin=2em,
	labelsep=0.6em
}

\setlist[enumerate,1]{
	label={\upshape(\roman*)},
	ref=\roman*,
	leftmargin=2.6em,
	labelsep=0.6em
}

\setlist[enumerate,2]{
	label=(\alph*),
	ref=\alph*,
	leftmargin=2.4em,
	labelsep=0.6em
}

\newcommand{\RR}{\mathbb{R}}
\newcommand{\NN}{\mathbb{N}}

\newcommand{\QQ}{\mathbb{Q}}

\renewcommand{\SS}{\mathbb{S}}
\newcommand{\LT}{L^2}
\newcommand{\LTC}{L^2(0,T;\RR^m)}
\newcommand{\LB}{L^\beta}

\newcommand{\LBC}{L^\beta(0,T;\RR^m)}
\newcommand{\LI}{L^\infty}
\newcommand{\LIC}{L^\infty(0,T;\RR^m)}

\newcommand{\eps}{\varepsilon}

\newcommand{\dd}{\mathop{}\!\mathrm{d}}

\DeclarePairedDelimiter{\norm}{\lVert}{\rVert}

\DeclarePairedDelimiter{\abs}{\lvert}{\rvert}

\DeclarePairedDelimiterX{\inner}[2]
{\langle}
{\rangle}
{#1,#2}

\DeclareMathOperator*{\esssup}{ess\,sup}

\providecommand{\PaperTitleText}{}
\providecommand{\ShortTitleText}{}
\providecommand{\ShortAuthorsText}{}
\providecommand{\PaperAuthorsText}{}
\providecommand{\PaperAffiliationsText}{}
\providecommand{\PaperAbstractText}{}
\providecommand{\PaperKeywordsText}{}
\providecommand{\PaperMSCText}{}
\providecommand{\PaperPDFSubjectText}
{Mathematical research article}

\NewDocumentCommand{\PaperTitle}{+m}{%
	\renewcommand{\PaperTitleText}{#1}%
}

\NewDocumentCommand{\ShortTitle}{+m}{%
	\renewcommand{\ShortTitleText}{#1}%
}

\NewDocumentCommand{\ShortAuthors}{+m}{%
	\renewcommand{\ShortAuthorsText}{#1}%
}

\NewDocumentCommand{\PaperAuthors}{+m}{%
	\renewcommand{\PaperAuthorsText}{#1}%
}

\NewDocumentCommand{\PaperAffiliations}{+m}{%
	\renewcommand{\PaperAffiliationsText}{#1}%
}

\NewDocumentCommand{\PaperAbstract}{+m}{%
	\renewcommand{\PaperAbstractText}{#1}%
}

\NewDocumentCommand{\PaperKeywords}{+m}{%
	\renewcommand{\PaperKeywordsText}{#1}%
}

\NewDocumentCommand{\PaperMSC}{+m}{%
	\renewcommand{\PaperMSCText}{#1}%
}

\providecommand{\PaperAbstractLabelText}{Abstract}

\providecommand{\PaperKeywordsLabelText}{Keywords}

\providecommand{\PaperMSCLabelText}
{Mathematics Subject Classification (2020)}

\newcommand{\AffMark}[1]{%
	\textsuperscript{%
		\hyperlink{aff:#1}{#1}%
	}%
}

\newcommand{\CorMark}{%
	\textsuperscript{%
		,\hyperlink{corresponding-author}{*}%
	}%
}

\newcommand{\PaperTemplateWarnIfEmpty}[2]{%
	\ifdefempty{#1}{%
		\PackageWarning
		{paper-template}
		{Paper field `#2' is empty}%
	}{}%
}

\AtBeginDocument{%
	\PaperTemplateWarnIfEmpty
	{\PaperTitleText}
	{PaperTitle}%
	
	\PaperTemplateWarnIfEmpty
	{\ShortTitleText}
	{ShortTitle}%
	
	\PaperTemplateWarnIfEmpty
	{\ShortAuthorsText}
	{ShortAuthors}%
	
	\PaperTemplateWarnIfEmpty
	{\PaperAuthorsText}
	{PaperAuthors}%
	
	\PaperTemplateWarnIfEmpty
	{\PaperAffiliationsText}
	{PaperAffiliations}%
	
	\PaperTemplateWarnIfEmpty
	{\PaperAbstractText}
	{PaperAbstract}%
	
	\PaperTemplateWarnIfEmpty
	{\PaperKeywordsText}
	{PaperKeywords}%
	
	\PaperTemplateWarnIfEmpty
	{\PaperMSCText}
	{PaperMSC}%
}

	\NewDocumentCommand{\MakePaperTitle}{}{%
		\thispagestyle{paperfirstpage}%
		
		\begin{center}
			
			{\fontsize{17}{21}\selectfont
				\bfseries
				\PaperTitleText
				\par
			}
			
			\vspace{1.2em}
			
			{\large
				\PaperAuthorsText
				\par
			}
			
			\vspace{0.85em}
			
			\begin{minipage}{0.92\textwidth}
				\centering
				\small
				\PaperAffiliationsText
			\end{minipage}
			
		\end{center}
		
		\vspace{1.25em}
		
		\begin{center}
			
			\begin{minipage}{0.92\textwidth}
				\small
				
				\noindent
				\textbf{\PaperAbstractLabelText.}
				\enspace
				\PaperAbstractText
				\par
				
				\vspace{0.65em}
				
				\noindent
				\textbf{\PaperKeywordsLabelText.}
				\enspace
				\PaperKeywordsText
				\par
				
				\vspace{0.35em}
				
				\noindent
				\textbf{\PaperMSCLabelText.}
				\enspace
				\PaperMSCText
				
			\end{minipage}
			
		\end{center}
		
		\vspace{1.2em}
	}

	\fancypagestyle{paperbody}{%
		\fancyhf{}%
		
		\fancyhead[L]{%
			\small
			\itshape
			\ShortTitleText
		}%
		
		\fancyhead[R]{%
			\small
			\ShortAuthorsText
		}%
		
		\fancyfoot[C]{%
			\thepage
		}%
		
	}

	\fancypagestyle{paperfirstpage}{%
		\fancyhf{}%
		
		\fancyfoot[C]{%
			\thepage
		}%
		
	}
	
		\AtBeginDocument{%
			\hypersetup{
				pdftitle={\PaperTitleText},
				pdfauthor={\ShortAuthorsText},
				pdfsubject={\PaperPDFSubjectText},
				pdfkeywords={\PaperKeywordsText}
			}%
		}

\PaperTitle{Convergence Analysis of Newton Methods for Nonlinear Optimal Control Problems}

\ShortTitle{}

\ShortAuthors{}

\PaperAuthors{%
	Yushu Feng\AffMark{1}
	\quad and \quad
	Haisen Zhang\AffMark{2}\CorMark
}

\PaperAffiliations{%
	\hypertarget{aff:1}{}\textsuperscript{1}\,
	School of Mathematics, Sichuan University, Chengdu, China.\\[0.25em]
	\hypertarget{aff:2}{}\textsuperscript{2}\,
	School of Mathematical Sciences, Sichuan Normal University, Chengdu, China.\\[0.25em]
	\hypertarget{corresponding-author}{}\textsuperscript{*}\,
	Corresponding author:
	\href{mailto:haisenzhang@yeah.net}{\texttt{haisenzhang@yeah.net}}.
}

\PaperAbstract{%
	The main purpose of this paper is to establish the local quadratic convergence of Newton's method for nonlinear optimal control problems.
	Under suitable smoothness assumptions, the cost functional is first shown to be twice G{\^a}teaux differentiable with respect to the control. An explicit operator representation of its second derivative is derived, and its continuity in different control spaces is examined. The second-order coercivity condition in $\LTC$ is then shown to be equivalent to the existence of a strongly regular solution to an associated matrix Riccati differential equation, thereby expressing an infinite-dimensional quadratic-form condition in terms of the solvability of a matrix differential equation. On this basis, local quadratic convergence of Newton's method in $\LIC$ is established using linear-quadratic optimal control theory. Under suitable structural assumptions, local quadratic convergence in $\LTC$ is also established. Explicit estimates of the corresponding convergence neighborhoods are derived, and the theoretical results are illustrated by numerical examples.
}

\PaperKeywords{%
	nonlinear optimal control problems, Newton's method, second-order coercivity conditions, Riccati equation, local quadratic convergence
}

\PaperMSC{49M15, 49K15, 65K10}

\begin{document}

\MakePaperTitle


\section{Introduction}

Given $T>0$, consider the nonlinear control system
\begin{equation}\label{system}
	\left\{
	\begin{aligned}
		& \dot{x}(t) = b(t,x(t),u(t)), \quad t \in [0,T], \\
		& x(0) = x_0,
	\end{aligned}
	\right.
\end{equation}
and the cost functional
\begin{equation}\label{cost}
	J(u(\cdot)) = \int_{0}^{T}f(t,x(t),u(t)) \dd t + g(x(T)).
\end{equation}
Here, $u: [0,T] \to \mathbb{R}^m$ is the control and $x: [0,T] \to \mathbb{R}^n$ is the corresponding state. We consider the following deterministic optimal control problem: find $\bar{u(\cdot)} \in \LTC$ such that
\begin{equation}\label{DOCP}
	J(\bar{u(\cdot)})
	= \underset{u(\cdot)\in \LTC}{\inf}J(u(\cdot)).
\end{equation}

Newton-type methods are widely used in nonlinear optimization and optimal control, largely because of their local quadratic convergence. For problem \eqref{DOCP}, two common formulations lead to the following methods.
\begin{enumerate}
	\item Newton's method: problem \eqref{DOCP} is viewed as an unconstrained optimization problem on the infinite-dimensional function space $\LTC$, and the cost functional is approximated directly to second order. At each iteration, the Newton direction formally satisfies
	\begin{equation*}
		J''(u^k(\cdot)) v^k(\cdot) = -J'(u^k(\cdot)),
	\end{equation*}
	and it can be obtained by solving a linear-quadratic optimal control problem. The iteration is specified in \cref{Algorithm-DOCP-Newton}.
	
	\item Lagrange--Newton and SQP methods: problem \eqref{DOCP} is viewed as a structured constrained optimization problem in which the state and control are optimization variables and the state equation is an equality constraint. Introduce the adjoint equation
	\begin{equation}\label{adjoint}
		\left\{
		\begin{aligned}
			& \dot{p}(t)
			= -b_x(t,x(t),u(t))^\top p(t)-f_x(t,x(t),u(t)), \quad t\in[0,T], \\
			& p(T) = g_x(x(T)),
		\end{aligned}
		\right.
	\end{equation}
	and the Hamiltonian
	\begin{equation} \label{Hamiltonian}
		H(t,x,u,p) = b(t,x,u)^\top p + f(t,x,u).
	\end{equation}
	The stationarity condition is
	\begin{equation}\label{FNC}
		H_u(t,\bar{x}(t),\bar{u}(t),\bar{p}(t)) = 0,
		\quad \text{for a.e. }t \in [0,T],
	\end{equation}
	where $\bar{u(\cdot)}\in\LTC$, and $\bar{x}(\cdot),\bar{p}(\cdot)$ denote the corresponding state and adjoint, respectively. The Lagrange--Newton method applies Newton's method directly to the optimality system consisting of the state equation \eqref{system}, the adjoint equation \eqref{adjoint}, and the stationarity condition \eqref{FNC} \cite{Tapia1974,AtkinsonHan2005}. Each iteration therefore requires the solution of a linearized optimality system. Sequential quadratic programming (SQP), in turn, linearizes the state equation \eqref{system} and constructs a quadratic subproblem by taking a second-order approximation of the Lagrangian. Each SQP iteration also requires the solution of a linear-quadratic optimal control problem; see \cref{RSQP} for the precise formulation. Under suitable conditions, the first-order optimality system of the SQP subproblem coincides with the linearized optimality system of the Lagrange--Newton method. Thus, despite their different formulations, the two methods are equivalent. We refer to both as SQP methods below. Further details can be found in \cite{Tapia1978,Powell1978,Alt1990,Powell1978MP,PalonmaresMagasatian1976,Han1976} and the survey \cite{Boggs1995}.
\end{enumerate}

The local quadratic convergence of Newton's method in an infinite-dimensional space is usually established under two conditions: bounded invertibility of the second derivative at the solution and local Lipschitz continuity of the second derivative in the same space. The former often follows from a second-order coercivity condition. In optimal control, the integral quadratic structure of the second variation makes $\LTC$ the natural space for the second-order coercivity condition; see \cref{LSSC}. On the other hand, $J$ is typically twice Fr{\'e}chet differentiable with respect to $u(\cdot)$ in $\LIC$, where its second derivative is locally Lipschitz continuous. This difference between the norms used for smoothness and coercivity is known as the two-norm discrepancy; see \cite{Maurer1981,AltMalanowski1993,HoppeNeitzel2021,Troltzsch2010}. A similar difficulty arises in the convergence analysis of SQP methods for optimal control: second-order coercivity is imposed with respect to a weaker norm on the state--control space, whereas second-order smoothness is established with respect to a stronger norm.

\subsection{Related work}

Early applications of Newton-type methods to optimal control focused primarily on the construction and numerical implementation of the algorithms, including Newton's method \cite{KellyKoppMoyer1963,Mitter1966}, the Lagrange--Newton method \cite{KoppMcGill1964,McGill1965,SchleyLee1967}, and SQP methods \cite{Machielsen1988}. Subsequent work developed a systematic theory of local quadratic convergence for SQP methods based on second-order smoothness and stability of the subproblems. Two main approaches address the two-norm discrepancy. In the first approach, coercivity is imposed in the weaker norm, whereas unique solvability and stability of the subproblems are established in the stronger space. Convergence is then obtained from smoothness in the stronger norm. The work of Maurer \cite{Maurer1981} and Malanowski \cite{Malanowski1993} laid the foundations for this approach. In the second approach, additional structural assumptions allow the required smoothness to be established in the weaker norm, in which coercivity is already available. Alt \cite{Alt1990} studied local quadratic convergence of infinite-dimensional SQP methods and applied the results to optimal control problems with dynamics affine in the control and a cost functional quadratic in the control, obtaining convergence in the weaker norm. For more general problems, Alt and Malanowski \cite{AltMalanowski1993} and Dontchev et al.\ \cite{DontchevHagerPoore1995} established local quadratic convergence of SQP methods in the stronger norm, and Alt \cite{Alt1994} also derived estimates of the convergence radius. These analyses were subsequently extended to PDE-constrained optimal control, taking into account the structure and regularity of the governing equations. Early examples include semilinear elliptic boundary control \cite{Heinkenschloss1996}, phase-field control \cite{HeinkenschlossTroltzsch1998}, and semilinear parabolic control \cite{Troltzsch1999}.

For a class of weakly singular optimal control problems, Ito and Kunisch \cite{ItoKunisch2000} used regularity of the state and adjoint equations, together with second-order smoothness of the cost functional $J(x(\cdot),u(\cdot))$ in the weaker state--control space $X_1 \times U$ and uniform invertibility of the linearized optimality system in the weaker state--control--adjoint space $X_1 \times U \times Z_1$. Under these and related assumptions, they proved local quadratic convergence of the controls in $U$ and of the state--control--adjoint triples in the stronger space $X_2 \times U \times Z_2$. Hinze and Kunisch \cite{HinzeKunisch2001} considered optimal control of the two-dimensional unsteady Navier--Stokes equations with the control entering linearly. For a separable cost functional $J(x(\cdot),u(\cdot)) = J_1(x(\cdot)) + J_2(u(\cdot))$ satisfying second-order smoothness assumptions in the joint state--control variable, they used the smoothness of the control-to-state map to prove local quadratic convergence of Newton's method and SQP methods, under assumptions including positive semidefiniteness of $J_{xx}$ and coercivity of $J_{uu}$ at the solution.

Further results on Newton-type methods for optimal control have been obtained in recent years. Hoppe and Neitzel \cite{HoppeNeitzel2021} established convergence in a stronger norm for an SQP method applied to the optimal control of quasilinear parabolic equations. Hehl and Neitzel \cite{HehlNeitzel2024} proved convergence in both weaker and stronger norms for time-discretized optimal control problems governed by a regularized fracture model. Gobet and Grangereau \cite{GobetGrangereau2022} considered unconstrained stochastic optimal control problems with random coefficients and cost functions. The dynamics have no diffusion term, and the drift is affine in the control. Under assumptions including joint convexity in the state and control and strong convexity in the control, they established local quadratic convergence of Newton's method in the space of uniformly bounded processes $\mathbb{H}^{\infty,\infty}(0,T;\RR^m)$.

Casas and Mateos \cite{CasasMateos2026} studied an SQP method that iterates only on the control for box-constrained optimal control problems whose first derivative has the form $J'(u(\cdot))=\kappa u(\cdot) + \Phi(u(\cdot))$, with $\kappa > 0$. They assumed second-order smoothness of the cost functional on a problem-dependent space $\LBC$, together with a compact extension of $\Phi'$ to $\LTC$, successive gains in regularity, and local Lipschitz continuity into $\LIC$. Combining these properties with a no-gap second-order sufficient optimality condition on the critical cone and strict complementarity, they proved local quadratic convergence of the control sequence in $L^q(0,T;\RR^m)$ for every $q \in [\beta, \infty]$. Their framework requires finite box constraints when $\beta > 2$. The unconstrained case corresponds to $\beta = 2$ with no finite bounds on the control, in which case the iteration reduces to Newton's method.

The convergence of SQP methods for optimal control has thus been studied extensively. Although convergence of Newton's method has also been studied, establishing local quadratic convergence for general nonlinear optimal control problems merits further study. In the Newton iteration considered here, the original nonlinear state equation is solved after each control update. In contrast, a standard SQP iteration updates the state and control simultaneously, and the updated pair generally does not satisfy the original nonlinear dynamics. Provided that the state equation is solved exactly, every state--control pair produced by Newton's method is dynamically feasible. Even if the iteration is terminated early, the computed state trajectory remains consistent with the current control; see \cref{RSQP}. This property motivates the present convergence analysis, in which the two-norm discrepancy is addressed. A further difficulty is that the second-order coercivity condition \eqref{SSC} required by Newton's method is a functional inequality in an infinite-dimensional space and is difficult to verify directly in applications.

\subsection{Main results}

First, we prove that the cost functional $J$ is twice G{\^a}teaux differentiable on $\LTC$ and derive an explicit operator representation of $J''$. We examine the different continuity properties of $J''$ in $\LIC$ and $\LTC$ and give sufficient conditions for local Lipschitz continuity in $\LTC$.
Second, we characterize second-order coercivity in $\LTC$ by the existence of a strongly regular solution to an associated matrix Riccati equation. The resulting equivalence reduces the verification of an infinite-dimensional quadratic-form condition to the solvability of a matrix differential equation.
Finally, under the second-order coercivity condition, we use linear-quadratic optimal control theory to address the two-norm discrepancy and establish local quadratic convergence of Newton's method in $\LIC$. When $b_{uu}$ and $f_{uu}$ are independent of the control, we also establish local quadratic convergence in $\LTC$. Explicit estimates for the corresponding radii of convergence are also obtained.

\subsection{Organization of the paper}

The remainder of the paper is organized as follows. Section~2 introduces the notation, assumptions, and preliminary results. Section~3 studies second-order differentiability of the cost functional, an operator representation of its second derivative, and the continuity of this derivative. Section~4 gives a Riccati characterization of second-order coercivity. Section~5 presents the Newton iteration, establishes its local quadratic convergence, and gives estimates for the radii of convergence and numerical examples.


\section{Notation and preliminaries}

\subsection{Notation}

The following notation is used throughout the paper.

$\triangleright$ {\itshape Norms and inner products.} For a normed space $X$, its norm is denoted by $\norm{\cdot}_X$. If $X$ is a Hilbert space, its inner product is denoted by $\inner{\cdot}{\cdot}_X$. In Euclidean spaces, we use $\abs{\cdot}$ and $\inner{\cdot}{\cdot}$ for the norm and inner product, respectively. For $x\in X$ and $\delta>0$, let $B_X(x,\delta)$ denote the closed ball in $X$ with center $x$ and radius $\delta$.

$\triangleright$ {\itshape Linear operators.} For normed spaces $X,Y$, let $\mathbb{L}(X,Y)$ denote the space of bounded linear operators from $X$ to $Y$, equipped with the norm
\begin{equation*}
	\norm{\varphi}_{\mathbb{L}(X,Y)}
	= \underset{x \in X \backslash \{0\}}{\sup}\frac{\norm{\varphi x}_Y}{\norm{x}_X}.
\end{equation*}
We write $\mathbb{L}(X)=\mathbb{L}(X,X)$.

$\triangleright$ {\itshape Function spaces.} For $\beta\in[1,\infty)$, let $\LB$ and $\LI$ denote the spaces of $\beta$-integrable and essentially bounded functions from $[0,T]$ to $\mathbb{R}^m$, respectively, equipped with the norms
\begin{equation*}
	\norm{\varphi}_\beta \triangleq
	\left(
		\int_{0}^{T}\abs{\varphi(t)}^\beta \dd t
	\right)^\frac{1}{\beta}, \quad
	\norm{\varphi}_\infty\triangleq 
	\underset{t \in [0,T]}{\esssup}
	\abs{\varphi(t)}.
\end{equation*}

$\triangleright$ {\itshape Derivatives.} For a function $\varphi(x_1,x_2,\cdots,x_n)$, we write $\varphi_{x_i}$ for its partial derivative with respect to $x_i$ and $\varphi_{x_ix_j}$ for the partial derivative of $\varphi_{x_i}$ with respect to $x_j$. The Hessian with respect to $(x_i,x_j)$ and its quadratic form are denoted by
\begin{equation*}
	\varphi_{(x_i,x_j)^2} \triangleq
	\left(\begin{matrix}
		\varphi_{x_ix_i} & \varphi_{x_ix_j} \\
		\varphi_{x_jx_i} & \varphi_{x_jx_j}
	\end{matrix}\right) \quad
	\text{and} \quad
	\varphi_{(x_i,x_j)^2}(y_i,y_j)^2
	= \left(\begin{matrix}
		y_i \\
		y_j \\
	\end{matrix}\right)^\top
	\varphi_{(x_i,x_j)^2}
	\left(\begin{matrix}
		y_i \\
		y_j \\
	\end{matrix}\right).
\end{equation*}
For $k\in\NN$, let $u(\cdot),\bar{u(\cdot)},u^k(\cdot)\in\LT$, and let $x(\cdot),\bar x(\cdot),x^k(\cdot)$ and $p(\cdot),\bar p(\cdot),p^k(\cdot)$ be their corresponding states and adjoints, respectively. For $t\in[0,T]$ and $i\in\{x,u,xx,xu,ux,uu,(x,u)^2\}$, we use the shorthand notation
\begin{equation}\label{simple}
	\left\{
	\begin{aligned}
		& b_i[t] = b_i(t,x(t),u(t)), \quad
		  f_i[t] = f_i(t,x(t),u(t)), \quad
		  H_i[t] = H_i(t,x(t),u(t),p(t)), \\
		& \overline{b_i}[t]
		= b_i(t,\bar{x}(t),\bar{u}(t)), \quad
		  \overline{f_i}[t]
		= f_i(t,\bar{x}(t),\bar{u}(t)), \quad
		  \overline{H_i}[t]
		= H_i(t,\bar{x}(t),\bar{u}(t),\bar{p}(t)) \\
		& b_i^k[t] = b_i(t,x^k(t),u^k(t)), \quad
		  f_i^k[t] = f_i(t,x^k(t),u^k(t)), \quad
		  H_i^k[t] = H_i(t,x^k(t),u^k(t),p^k(t)).
	\end{aligned}
	\right.
\end{equation}

\subsection{Preliminaries}

We impose the following assumptions on problem \eqref{DOCP}.
\begin{assumption}\label{A1}
	The mapping $b$ is measurable, and $b(t,\cdot,\cdot)$ is twice differentiable on $\mathbb{R}^n\times \mathbb{R}^m$ for every $t\in[0,T]$. There exists a constant $L>0$, independent of $t$, such that, for all $x,\overline{x}\in\mathbb{R}^n$ and $u,\overline{u}\in\mathbb{R}^m$,
	\begin{equation*}
		\left\{
		\begin{aligned}
			& \abs{b(t,x,u)} \le L(\abs{x}+\abs{u}+1), \quad
			\abs{b_x(t,x,u)}+\abs{b_u(t,x,u)} \le L, \\
			& \abs{b_{(x,u)^2}(t,x,u)} \le L, \quad
			\abs{
				b_{(x,u)^2}(t,x,u)-b_{(x,u)^2}(t,\bar{x},\bar{u})
			}
			\le L(\abs{x-\bar{x}}+\abs{u-\bar{u}}).
		\end{aligned}
		\right.
	\end{equation*}
\end{assumption}

\begin{assumption}\label{A2}
	The mappings $f$ and $g$ are measurable. For every $t\in[0,T]$, $f(t,\cdot,\cdot)$ is twice differentiable on $\mathbb{R}^n\times\mathbb{R}^m$, and $g$ is twice differentiable on $\mathbb{R}^n$. There exists a constant $L>0$, independent of $t$, such that, for all $x,\bar{x}\in\mathbb{R}^n$ and $u,\bar{u}\in\mathbb{R}^m$,
	\begin{equation*}
		\left\{
		\begin{aligned}
			& \abs{f(t,x,u)} \le L(\abs{x}^2+\abs{u}^2+1),~
			\abs{f_x(t,x,u)}+\abs{f_u(t,x,u)}
			\le L(\abs{x}+\abs{u}+1),	\\
			& \abs{f_{(x,u)^2}(t,x,u)} \le L,~
			\abs{
				f_{(x,u)^2}(t,x,u)-f_{(x,u)^2}(t,\bar{x},\bar{u})
			}
			\le L(\abs{x-\bar{x}}+\abs{u-\bar{u}}), \\
			& \abs{g(x)}\le L(\abs{x}^2+1),~
			\abs{g_x(x)} \le L(\abs{x}+1), \\
			& \abs{g_{xx}(x)} \le L,~
			\abs{g_{xx}(x)-g_{xx}(\bar{x})}
			\le L\abs{x-\bar{x}}.
		\end{aligned}
		\right.
	\end{equation*}
\end{assumption}
\newcommand{\assume}{\cref{A1,A2} } \newcommand{\Assume}{\cref{A1,A2} }

Under \assume, equality of mixed partial derivatives yields
\begin{equation*}
	H_{ux}(t,x,u,p)=H_{xu}(t,x,u,p)^\top, \quad \forall
	(t,x,u,p)\in [0,T]\times\mathbb{R}^n\times\mathbb{R}^m\times\mathbb{R}^n.
\end{equation*}

By standard ordinary differential equation theory, under \assume, the state equation \eqref{system} and the adjoint equation \eqref{adjoint} admit unique solutions for every control $u(\cdot) \in \LT$. 
\begin{definition}\label{D-feasible-group}
	Let $u(\cdot) \in \LT$. If $x(\cdot)$ and $p(\cdot)$ are the state and adjoint corresponding to $u(\cdot)$, respectively, then $(x(\cdot),u(\cdot),p(\cdot))$ is called an admissible triple. If $\bar{u(\cdot)}$ solves problem \eqref{DOCP}, the corresponding triple $(\bar{x}(\cdot),\bar{u(\cdot)},\bar{p}(\cdot))$ is called an optimal triple.
\end{definition}

Given $u(\cdot) \in \LT$ and its corresponding state $x(\cdot)$, define the state variation $y(\cdot) \in C([0,T];\RR^n)$ by
\begin{equation}\label{variable}
	\left\{
	\begin{aligned}
		& \dot{y}(t)
		= b_x(t,x(t),u(t)) y(t)
		+ b_u(t,x(t),u(t)) v(t), \quad t \in [0,T], \\
		& y(0) = 0.
	\end{aligned}
	\right.
\end{equation}

\begin{lemma}[{Second-order sufficient condition \cite{BonnansSilva2012}}]\label{LSSC}
	Suppose that \assume hold, and let $(\bar{x}(\cdot),\bar{u(\cdot)},\bar{p}(\cdot))$ be an admissible triple satisfying the stationarity condition \eqref{FNC}. Assume that there exists a constant $\alpha>0$ such that, for every $v(\cdot)\in \LT$,
	\begin{equation}\label{SSC}
		\int_{0}^{T}H_{(x,u)^2}(t,\bar{x}(t),\bar{u}(t),\bar{p}(t))
		(y(t),v(t))^2dt
		+ y(T)^\top g_{xx}(\bar{x}(T))y(T)
		\ge \alpha\norm{v}_2^2,
	\end{equation}
	where $y(\cdot)$ is the corresponding state variation. Then there exists $\delta>0$ such that, for all $v(\cdot)\in B_{\LI}(0,\delta)$,
	\begin{equation*}
		J(\bar{u(\cdot)}+v(\cdot)) 
		\ge J(\bar{u(\cdot)}) + \frac{\alpha}{2}\alpha\norm{v}_2^2.
	\end{equation*}
\end{lemma}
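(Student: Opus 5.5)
The plan is a second-order Taylor expansion of $J$ at $\bar u$ along $v$, in which stationarity kills the first-order term and \eqref{SSC} controls the second-order term. The remainder is bounded by $C\norm{v}_\infty\norm{v}_2^2$. Note that the conclusion as typeset reads $\frac{\alpha}{2}\alpha$; I will aim for the bound $J(\bar u+v)\ge J(\bar u)+\frac{\alpha}{4}\norm{v}_2^2$. Since $\frac{\alpha}{4}\ge\frac{\alpha^2}{2}$ whenever $\alpha\le\frac12$, the typeset constant then follows after shrinking $\alpha$, because \eqref{SSC} remains valid for any smaller $\alpha$.

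\textbf{Step 1: state estimates.} Fix $v\in\LI$ and write $u=\bar u+v$, with $x$ the corresponding state, $\delta x=x-\bar x$, and $y$ the solution of \eqref{variable} at $\bar u$. Define $z=\delta x-y$. Gronwall's inequality together with the Lipschitz bounds of \cref{A1} gives $\norm{\delta x}_\infty\le C\norm{v}_1\le C\norm{v}_2$ and $\norm{y}_\infty\le C\norm{v}_2$. The remainder $z$ satisfies $\dot z=\overline{b_x}z+r$, where $r$ is the first-order Taylor remainder of $b$. Since $b_{(x,u)^2}$ is bounded, $\abs{r}\le C(\abs{\delta x}^2+\abs{v}^2)$, and hence $\norm{z}_\infty\le C\norm{v}_2^2$.

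\textbf{Step 2: Lagrangian form of the cost difference.} Using the adjoint $\bar p$ from \eqref{adjoint}, write
$$J(u)-J(\bar u)=\int_0^T\big[H(t,x,u,\bar p)-H(t,\bar x,\bar u,\bar p)-\bar p^\top(\dot x-\dot{\bar x})\big]\dd t+g(x(T))-g(\bar x(T)).$$
Integrating $\bar p^\top\dot{\delta x}$ by parts, using $\delta x(0)=0$, $\bar p(T)=g_x(\bar x(T))$ and $\dot{\bar p}=-\overline{H_x}$, turns the right-hand side into a sum of second-order Taylor remainders of $H$ in $(x,u)$ and of $g$ in $x$. The linear terms cancel because of the adjoint equation and \eqref{FNC}, so
$$J(u)-J(\bar u)=\tfrac12\int_0^T\overline{H_{(x,u)^2}}(\delta x,v)^2\dd t+\tfrac12\delta x(T)^\top\overline{g_{xx}}\,\delta x(T)+R,$$
where the Lipschitz continuity of the second derivatives in \cref{A1,A2}, with $\bar p$ bounded, gives
$$\abs{R}\le C\int_0^T(\abs{\delta x}+\abs{v})^3\dd t+C\abs{\delta x(T)}^3\le C(\norm{\delta x}_\infty+\norm{v}_\infty)\norm{v}_2^2.$$

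\textbf{Step 3: replace $\delta x$ by $y$ and apply \eqref{SSC}.} Substitute $\delta x=y+z$ in the quadratic form. The cross terms are bounded by $C\norm{z}_\infty(\norm{y}_\infty+\norm{v}_1)\le C\norm{v}_2^3\le C'\norm{v}_\infty\norm{v}_2^2$, since $\norm{v}_2\le\sqrt T\norm{v}_\infty$. The term quadratic in $z$ is bounded in the same way. Applying \eqref{SSC} to the remaining quadratic form gives
$$J(u)-J(\bar u)\ge\tfrac{\alpha}{2}\norm{v}_2^2-C\norm{v}_\infty\norm{v}_2^2.$$
Choosing $\delta=\alpha/(4C)$ yields $J(u)-J(\bar u)\ge\frac{\alpha}{4}\norm{v}_2^2$ for all $v\in B_{\LI}(0,\delta)$.

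\textbf{Main obstacle.} The delicate part is the cubic remainder $R$, since it must be controlled by $\norm{v}_\infty\norm{v}_2^2$ and not by $\norm{v}_2^3$ alone. This is the two-norm discrepancy, and it is exactly why the neighborhood is taken in $\LI$. It requires using the uniform Lipschitz bounds on $H_{(x,u)^2}$, with $\bar p\in L^\infty$, pointwise before integrating.
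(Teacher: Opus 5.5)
Your Steps 1--3 are correct and give a complete proof of the intended statement. The paper does not prove \cref{LSSC}; it only cites Bonnans--Silva, whose analysis rests on first- and second-order expansions of the state and cost. Your argument is the deterministic version of that expansion, with the cubic remainder controlled pointwise by $\norm{v}_\infty\norm{v}_2^2$.

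For comparison, the paper's own Section~3 gives a shorter derivation. Since $J'(\bar u)=0$, one has $J(\bar u+v)-J(\bar u)=\int_0^1(1-\theta)\inner{J''(\bar u+\theta v)v}{v}_{\LT}\dd\theta$. \cref{TSOD}(ii) identifies $\inner{J''(\bar u)v}{v}_{\LT}$ with the left-hand side of \eqref{SSC}. \cref{TSODC}(i) with $\beta=2$ then bounds the error by $\tfrac{C_3}{6}(1+\norm{\bar u}_1+T\norm{v}_\infty)\norm{v}_\infty\norm{v}_2^2$; its proof only uses $u-\bar u\in\LI$, so $\bar u\in\LT$ is enough. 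That route yields an explicit $\delta$ and never needs the linearization error $z=\delta x-y$. Yours is self-contained and does not depend on the operator representation of $J''$.

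The one step to fix is your reduction to the typeset constant $\frac{\alpha}{2}\alpha$. Replacing $\alpha$ by $\alpha'=\min\{\alpha,\tfrac12\}$ only yields $J(\bar u+v)\ge J(\bar u)+\tfrac{\alpha'^2}{2}\norm{v}_2^2$, not the bound with the $\alpha$ from the hypothesis. That bound is false in general. Take $b\equiv0$, $g\equiv0$, $f=\tfrac{\alpha}{2}\abs{u}^2$ and $\bar u=0$: then \eqref{SSC} holds with this $\alpha$, while $J(v)-J(0)=\tfrac{\alpha}{2}\norm{v}_2^2<\tfrac{\alpha^2}{2}\norm{v}_2^2$ whenever $\alpha>1$ and $v\neq0$.

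So the conclusion must be read as a typo. What your estimate $J(\bar u+v)-J(\bar u)\ge(\tfrac{\alpha}{2}-C\norm{v}_\infty)\norm{v}_2^2$ actually proves is the correct version: for every $\kappa<\tfrac{\alpha}{2}$ there is $\delta>0$ with $J(\bar u+v)\ge J(\bar u)+\kappa\norm{v}_2^2$ on $B_{\LI}(0,\delta)$. This contains $\kappa=\tfrac{\alpha^2}{2}$ precisely when $\alpha<1$, obtained by choosing $C\delta\le\tfrac{\alpha-\alpha^2}{2}$ rather than by shrinking $\alpha$.

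A minor point: the term quadratic in $z$ is of size $\norm{v}_2^4\le T\norm{v}_\infty^2\norm{v}_2^2$. Bounding it by $C\norm{v}_\infty\norm{v}_2^2$ requires $\norm{v}_\infty\le1$, so take $\delta=\min\{1,\alpha/(4C)\}$.
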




\section{Second-order analysis of the cost functional}

This section studies the second-order differentiability of the cost functional in problem \eqref{DOCP} with respect to the control and the Lipschitz continuity of its second derivative.

\subsection{First-order differentiability of the cost functional}

\begin{lemma}\label{LExpH}
	Suppose that \Assume hold. Let $(x(\cdot),u(\cdot), p(\cdot))$ and $(\bar{x}(\cdot),\bar{u(\cdot)},\bar{p}(\cdot))$ be two admissible triples. Then there exists a constant $C_1 > 0$, depending only on $T,L,x_0$, such that the following estimates hold, with \eqref{Ex}-\eqref{Ep} valid for every $t \in [0,T]$ and \eqref{EH} valid for almost every $t \in [0,T]$:
	\begin{align}
		\label{Ex}
		&	\abs{x(t)} \le C_1 (1+\norm{u}_1),
		&&	\abs{x(t)-\bar{x}(t)} \le C_1 \norm{u-\bar{u}}_1, \\
		\label{Ep}
		&	\abs{p(t)} \le C_1 (1+\norm{u}_1), \quad
		&&	\abs{p(t)-\bar{p}(t)} \le C_1 (1 + \norm{u}_1) \norm{u-\bar{u}}_1, \\
		\label{EH}
		&	\abs{H_i[t]} \le C_1 (1+\norm{u}_1), 
		&&	\abs{H_i[t]-\overline{H_{i}}[t]} \le C_1 (1 + \norm{u}_1)(\norm{u-\bar{u}}_1+\abs{u(t)-\bar{u}(t)}).
	\end{align}
	Here $i \in \{xx,xu,ux,uu\}$. More precisely, $C_1$ is given by
	\begin{equation}\label{C1}
		\left\{
		\begin{aligned}
			& C_x = \max\left\{e^{LT}(\abs{x_0}+LT),~Le^{LT}\right\}, \\
			& K_p = Le^{LT} (1+C_x) (T+1), \\
			& C_p = \max \Bigl\{ K_p,  Le^{LT} \bigl[ C_x + ( C_xT+1) (1+K_p)\bigr]\Bigr\}, \\
			& C_H = \max \{ L(C_p+C_x+C_pC_x) , L(1+C_p) \}, \\
			& C_1 = \max \{ C_x, C_p, C_H \}.
		\end{aligned}
		\right.
	\end{equation}
\end{lemma}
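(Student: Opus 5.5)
The plan is to use Gronwall's inequality on the state and adjoint equations, then insert these bounds into the Lipschitz estimates for the second derivatives in \cref{A1,A2}. The constants in \eqref{C1} are arranged so that each step feeds the next: first $C_x$, then $K_p$ and $C_p$, then $C_H$.

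\textbf{State estimates \eqref{Ex}.} Write $x(t)=x_0+\int_0^t b(s,x(s),u(s))\dd s$. The growth bound in \cref{A1} gives
\begin{equation*}
\abs{x(t)}\le \abs{x_0}+LT+L\norm{u}_1+L\int_0^t\abs{x(s)}\dd s,
\end{equation*}
so Gronwall yields $\abs{x(t)}\le e^{LT}(\abs{x_0}+LT+L\norm{u}_1)\le C_x(1+\norm{u}_1)$. For the difference, the bound $\abs{b_x}+\abs{b_u}\le L$ and the mean value theorem give
\begin{equation*}
\abs{x(t)-\bar x(t)}\le L\int_0^t\abs{x-\bar x}\dd s+L\norm{u-\bar u}_1,
\end{equation*}
hence $\abs{x(t)-\bar x(t)}\le Le^{LT}\norm{u-\bar u}_1\le C_x\norm{u-\bar u}_1$.

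\textbf{Adjoint estimates \eqref{Ep}.} Integrate the adjoint equation \eqref{adjoint} backward from $T$. Use $\abs{b_x}\le L$, $\abs{f_x}\le L(\abs{x}+\abs{u}+1)$ and $\abs{g_x}\le L(\abs{x}+1)$. Together with the bound on $\abs{x}$ and $\int\abs{u}=\norm{u}_1$, this gives
\begin{equation*}
\abs{p(t)}\le L(1+C_x)(T+1)(1+\norm{u}_1)+L\int_t^T\abs{p}\dd s,
\end{equation*}
and backward Gronwall gives $\abs{p(t)}\le K_p(1+\norm{u}_1)$.

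For the difference $p-\bar p$, split
\begin{equation*}
b_x^\top p-\bar b_x^\top\bar p=(b_x-\bar b_x)^\top p+\bar b_x^\top(p-\bar p).
\end{equation*}
Since $b_x$ is Lipschitz in $(x,u)$ with constant $L$ (this follows from $\abs{b_{(x,u)^2}}\le L$), and $f_x$ is Lipschitz likewise by $\abs{f_{(x,u)^2}}\le L$, we get
\begin{equation*}
\abs{b_x-\bar b_x}\le L(C_x\norm{u-\bar u}_1+\abs{u-\bar u}).
\end{equation*}
Integrating against $\abs{p}\le K_p(1+\norm{u}_1)$ and adding the $f_x$ and $g_x$ terms produces a forcing of size
\begin{equation*}
L\bigl[C_x+(C_xT+1)(1+K_p)\bigr](1+\norm{u}_1)\norm{u-\bar u}_1 .
\end{equation*}
Gronwall then gives $C_p$. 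The bound on $\abs{p}$ uses $K_p\le C_p$.

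\textbf{Hamiltonian estimates \eqref{EH}.} We have $H_i=b_i^\top p+f_i$ for $i\in\{xx,xu,ux,uu\}$, understood as the contraction with $p$. The bounds $\abs{b_i},\abs{f_i}\le L$ give $\abs{H_i}\le L(C_p+1)(1+\norm{u}_1)$. For the difference, write
\begin{equation*}
H_i-\bar H_i=(b_i-\bar b_i)^\top p+\bar b_i^\top(p-\bar p)+(f_i-\bar f_i),
\end{equation*}
and use the Lipschitz bounds in \cref{A1,A2} for the second derivatives:
\begin{equation*}
\abs{b_i-\bar b_i}+\abs{f_i-\bar f_i}\lesssim L(C_x\norm{u-\bar u}_1+\abs{u(t)-\bar u(t)}).
\end{equation*}
Combined with the $p$ bounds, collecting constants gives $C_H=\max\{L(C_p+C_x+C_pC_x),L(1+C_p)\}$. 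This estimate holds only a.e. because it involves the pointwise value $u(t)$. Finally set $C_1=\max\{C_x,C_p,C_H\}$.

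\textbf{Main obstacle.} The only delicate point is the bookkeeping of constants. The goal is to match exactly the stated forms of $K_p$ and $C_p$, in particular the factors $(T+1)$ and $(C_xT+1)(1+K_p)$. Conceptually there is no obstacle: all estimates are routine applications of Gronwall's inequality combined with the uniform Lipschitz and boundedness hypotheses. Where a factor like $1+\norm{u}_1$ has to absorb a constant, I will use $1\le 1+\norm{u}_1$.
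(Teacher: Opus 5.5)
Your proposal is correct and follows the paper's own route: Gronwall for $x$ and $x-\bar x$, backward Gronwall for $p$ and $p-\bar p$ using the split $(b_x-\overline{b_x})^\top p+\overline{b_x}^\top(p-\bar p)$, and the same decomposition of $H_i-\overline{H_i}$. Your intermediate constants match $C_x$, $K_p$, $C_p$ and $C_H$ exactly. The only thing to tighten is the $\lesssim$ in the Hamiltonian step: replace it by the explicit bound $\abs{b_i-\overline{b_i}}\abs{p(t)}+\abs{f_i-\overline{f_i}}\le L\bigl(C_x\norm{u-\bar u}_1+\abs{u(t)-\bar u(t)}\bigr)\bigl(\abs{p(t)}+1\bigr)$, which is precisely what yields the stated $C_H$.
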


\begin{proof}
	Integrating the state equation \eqref{system} gives
	\begin{align*}
		\abs{x(t)}
		\le & \abs{x_0} + \int_{0}^{t} \abs{b(s,x(s),u(s))} \dd s \\
		\le & \abs{x_0}
		+ L \int_{0}^{t} (\abs{x(s)}+\abs{u(s)}+1)\dd s \\
		\le & \abs{x_0} + L (\norm{u}_1+T) + L \int_{0}^{t} \abs{x(s)}\dd s,
	\end{align*}
	Gronwall's inequality yields
	\begin{equation}\label{Ex.1}
		\abs{x(t)}
		\le e^{Lt} (\abs{x_0}+LT+L\norm{u}_1)
		\le e^{LT} (\abs{x_0}+LT) + Le^{LT}\norm{u}_1.
	\end{equation}
	Similarly, \eqref{system} and \cref{A1} give
	\begin{align*}
		\abs{x(t)-\bar{x}(t)}
		\le & \int_{0}^{t} \abs{\dot{x}(s)-\dot{\bar{x}}(s)} \dd s \\
		=	& \int_{0}^{t} \abs{b(s,x(s),u(s))-b(s,\bar{x}(s),\bar{u}(s))} \dd s \\
		\le & L \int_{0}^{t} (\abs{x(s)-\bar{x}(s)}+\abs{u(s)-\bar{u}(s)})\dd s \\
		\le & L\norm{u-\bar{u}}_1 + L \int_{0}^{t} \abs{x(s)-\bar{x}(s)}\dd s, 
	\end{align*}
	Applying Gronwall's inequality gives
	\begin{equation}\label{Ex.2}
		\abs{x(t)-\bar{x}(t)}
		\le Le^{Lt}\norm{u-\bar{u}}_1
		\le Le^{LT}\norm{u-\bar{u}}_1.
	\end{equation}
	Setting $C_x = \max\left\{e^{LT}(\abs{x_0}+LT),~Le^{LT}\right\}$ and combining \eqref{Ex.1}-\eqref{Ex.2}, we obtain
	\begin{equation}\label{Ex.3}
		\abs{x(t)} \le C_x (1+\norm{u}_1), \quad
		\abs{x(t)-\bar{x}(t)} \le C_x \norm{u-\bar{u}}_1.
	\end{equation}
	
	Under \assume, integrating \eqref{adjoint} backward and using \cref{Ex.3} gives
	\begin{align*}
		\abs{p(t)}
		\le & \abs{g_x(x(T))}
		+ \int_{t}^{T} (\abs{b_x[s]^\top p(s)} + \abs{f_x[s]})\dd s \\
		\le & L (\abs{x(T)}+1)
		+ \int_{t}^{T}
		\Bigl[
		L \abs{p(s)} + L ( \abs{x(s)} + \abs{u(s)} + 1 )
		\Bigr] \dd s \\
		\le & L ( 1 + C_x + C_x \norm{u}_1 )
		+ L \int_{0}^{T} ( 1 + C_x + C_x \norm{u}_1 + \abs{u(s)} )\dd s
		+ L \int_{t}^{T} \abs{p(s)}\dd s \\
		=	& L (1+C_x) (T+1) + L ( 1 + C_x + C_x T ) \norm{u}_1
		+ L \int_{t}^{T} \abs{p(s)} \dd s.
	\end{align*}
	Applying the backward Gronwall inequality to the preceding estimate gives
	\begin{align}
		\abs{p(t)}
		&\le L \Bigl[ (1+C_x)(T+1)+(1+C_x+C_xT)\norm{u}_1 \Bigr]
		   \left(1+L\int_t^T e^{L(s-t)}\dd s\right) \notag\\
		&= L e^{L(T-t)}\Bigl[ (1+C_x)(T+1)+(1+C_x+C_xT)\norm{u}_1 \Bigr] \notag\\
		&\le L e^{LT}\Bigl[ (1+C_x)(T+1)+(1+C_x+C_xT)\norm{u}_1 \Bigr] \notag\\
		&\le K_p(1+\norm{u}_1).\label{Ep.1}
	\end{align}
	where $K_p = Le^{LT} (1+C_x) (T+1)$. Similarly, \eqref{adjoint} and \assume give
	\begin{align*}
		\abs{p(T) - \bar{p}(T)}
		=	& \abs{g_x(x(T))-g_x(\bar{x}(T))}
		\le	L \abs{x(T)-\bar{x}(T)}
		\le L C_x \norm{u-\bar{u}}_1, \\
		\abs{\dot{p}(t)-\dot{\bar{p}}(t)}
		\le & \abs{b_x[t]^\top p(t) - \overline{b_x}[t]^\top \bar{p}(t)}
			+ \abs{f_x[t]-\overline{f_x}[t]} \\
		\overset{(a)}{\le}
			& \abs{b_x[t] - \overline{b_x}[t]} \abs{p(t)}
			+ \abs{\overline{b_x}[t]} \abs{p(t)-\bar{p}(t)}
			+ \abs{f_x[t] - \overline{f_x}[t]} \\
		\le & L\abs{p(t)-\bar{p}(t)}
			+ L ( \abs{x(t)-\bar{x}(t)} + \abs{u(t)-\bar{u}(t)} ) ( \abs{p(t)}+1 ) \\
		\le & L\abs{p(t)-\bar{p}(t)}
			+ L ( C_x\norm{u-\bar{u}}_1 + \abs{u(t)-\bar{u}(t)} ) ( \abs{p(t)}+1 ),
	\end{align*}
	The estimate in $(a)$ will be used repeatedly below. Consequently,
	\begin{align*}
		\abs{p(t)-\bar{p}(t)}
		\le & \abs{p(T)-\bar{p}(T)}
		+ \int_{t}^{T} \abs{\dot{p}(s)-\dot{\bar{p}}(s)} \dd s \\
		\le & L \int_{t}^{T} \abs{p(s)-\bar{p}(s)} \dd s
		+ L C_x\norm{u-\bar{u}}_1 \\
		&	+ L \int_{0}^{T} ( C_x\norm{u-\bar{u}}_1 + \abs{u(s)-\bar{u}(s)} ) ( \abs{p(s)}+1 ) \dd s \\
		\le & L \int_{t}^{T} \abs{p(s)-\bar{p}(s)} \dd s
		+ L \Bigl[ C_x + ( C_xT+1) (1+K_p+K_p\norm{u}_1) \Bigr] \norm{u-\bar{u}}_1,
	\end{align*}
	Applying Gronwall's inequality gives
	\begin{equation}\label{Ep.2}
		\abs{p(t)-\bar{p}(t)} \le Le^{LT} \Bigl[ C_x + ( C_xT+1) (1+K_p+K_p\norm{u}_1) \Bigr] \norm{u-\bar{u}}_1.
	\end{equation}
	Setting $C_p = \max \Bigl\{ K_p,  Le^{LT} \bigl[ C_x + ( C_xT+1) (1+K_p)\bigr]\Bigr\}$ and using \eqref{Ep.1}-\eqref{Ep.2}, we obtain
	\begin{equation}\label{Ep.3}
		\abs{p(t)} \le C_p (1+\norm{u}_1), \quad
		\abs{p(t)-\bar{p}(t)} \le C_p (1 + \norm{u}_1) \norm{u-\bar{u}}_1.
	\end{equation}
	
	Finally, we estimate the derivatives of the Hamiltonian. For any $i\in\{xx,xu,ux,uu\}$, \assume and \eqref{Ex.3}, \eqref{Ep.3} imply
	\begin{equation}\label{EH.1}
		\abs{H_i[t]} \le \abs{b_i[t]} \abs{p(t)} + \abs{f_i[t]} \le K_H(1+\norm{u}_1),
	\end{equation}
	where $K_H = L(C_p+1)$. Similarly,
	\begin{equation}\label{EH.2}
		\begin{aligned}
			& \abs{H_i[t] - \overline{H_{i}}[t]} \\
			\le & \abs{b_i[t]^\top p(t) - \overline{b_i}[t]^\top \bar{p}(t)} 
				+ \abs{f_i[t] - \overline{f_i}[t]} \\
			\le & L C_p ( 1+\norm{u}_1 ) \norm{u-\bar{u}}_1
				+ L ( C_x\norm{u-\bar{u}}_1+\abs{u(t)-\bar{u}(t)} ) 
				  ( 1+C_p+C_p\norm{u}_1 ).
		\end{aligned}
	\end{equation}
	Setting $C_H = \max \{ L(C_p+C_x+C_pC_x) , L(1+C_p) \}$ and using the preceding estimate together with \eqref{EH.1}, we obtain
	\begin{equation}\label{EH.3}
		\abs{H_i[t]} \le C_H (1+\norm{u}_1), \quad
		\abs{H_i[t]-\overline{H_{i}}[t]} \le C_H (1 + \norm{u}_1)(\norm{u-\bar{u}}_1+\abs{u(t)-\bar{u}(t)}).
	\end{equation}
	Thus, setting $C_1 = \max \{ C_x, C_p, C_H \}$, we obtain \eqref{Ex}-\eqref{C1} directly from \eqref{Ex.3}, \eqref{Ep.3}-\eqref{EH.3}.
\end{proof}

\begin{lemma}[First-order state variation {\cite[Lemma 2.2 (Chapter 3)]{YongZhou1999}}]\label{Lxy}
	Suppose that \Assume hold. Let $(x(\cdot),u(\cdot),p(\cdot))$ be an admissible triple and fix $v(\cdot) \in \LT$. Set $u^\eps = u + \eps v$, and let $x^\eps$ be the state corresponding to $u^\eps$. Then
	\begin{equation}\label{x-limit}
		\underset{\eps \to 0}{\lim} ~ \underset{t \in [0,T]}{\sup}
		\abs*{ \frac{x^\eps(t) - x(t)}{\eps} - y(t) }
		= 0,
	\end{equation}
	where $y(\cdot)$ solves the first-order variational equation \eqref{variable}.
\end{lemma}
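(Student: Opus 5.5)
We argue directly from the integral form of the state equation. We compare the difference quotient with the linearized dynamics and close with Gronwall's inequality. Set
\begin{equation*}
	z^\eps(t) = \frac{x^\eps(t)-x(t)}{\eps} - y(t), \qquad \eps \neq 0 .
\end{equation*}
Subtracting the integral forms of \eqref{system} for $u^\eps$ and $u$, and using the mean value theorem in integral form (justified because $b(t,\cdot,\cdot)$ is $C^1$ with bounded derivatives by \cref{A1}), we write
\begin{equation*}
	\frac{b(s,x^\eps(s),u^\eps(s))-b(s,x(s),u(s))}{\eps}
	= \tilde b_x^\eps(s)\,\frac{x^\eps(s)-x(s)}{\eps} + \tilde b_u^\eps(s)\,v(s).
\end{equation*}
Here
\begin{equation*}
	\tilde b_x^\eps(s)=\int_0^1 b_x\bigl(s,x(s)+\theta(x^\eps(s)-x(s)),u(s)+\theta\eps v(s)\bigr)\dd\theta,
\end{equation*}
and $\tilde b_u^\eps$ is defined analogously. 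Subtracting \eqref{variable} then gives
\begin{equation*}
	z^\eps(t) = \int_0^t \tilde b_x^\eps(s) z^\eps(s)\dd s
	+ \int_0^t \bigl(\tilde b_x^\eps(s)-b_x[s]\bigr) y(s)\dd s
	+ \int_0^t \bigl(\tilde b_u^\eps(s)-b_u[s]\bigr) v(s)\dd s .
\end{equation*}

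\textbf{Step 1 (Gronwall).} Since $\abs{\tilde b_x^\eps}\le L$, Gronwall's inequality yields
\begin{equation*}
	\sup_{t\in[0,T]}\abs{z^\eps(t)} \le e^{LT}\bigl(R_1^\eps+R_2^\eps\bigr),
\end{equation*}
where
\begin{equation*}
	R_1^\eps=\int_0^T \abs{\tilde b_x^\eps-b_x[s]}\,\abs{y(s)}\dd s,
	\qquad
	R_2^\eps=\int_0^T \abs{\tilde b_u^\eps-b_u[s]}\,\abs{v(s)}\dd s .
\end{equation*}

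\textbf{Step 2 (remainders).} The second derivatives $b_{(x,u)^2}$ are bounded by $L$, so $b_x$ and $b_u$ are $L$-Lipschitz in $(x,u)$. By \cref{LExpH}, \eqref{Ex}, applied with $\bar u=u^\eps$,
\begin{equation*}
	\abs{x^\eps(s)-x(s)}\le C_1\abs{\eps}\norm{v}_1 .
\end{equation*}
Hence
\begin{equation*}
	\abs{\tilde b_x^\eps(s)-b_x[s]}+\abs{\tilde b_u^\eps(s)-b_u[s]}
	\le L\bigl(C_1\abs{\eps}\norm{v}_1+\abs{\eps}\abs{v(s)}\bigr).
\end{equation*}
The state variation $y$ is bounded on $[0,T]$, and $v\in\LT\subset L^1$. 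Therefore
\begin{equation*}
	R_1^\eps+R_2^\eps \le L\abs{\eps}\Bigl[\norm{y}_\infty\bigl(C_1T\norm{v}_1+\norm{v}_1\bigr) + C_1\norm{v}_1^2 + \norm{v}_2^2\Bigr],
\end{equation*}
which is $O(\abs{\eps})$. This gives \eqref{x-limit}, in fact with a linear rate.

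\textbf{Main obstacle.} The delicate point is the term $\abs{\eps}\abs{v(s)}^2$ in $R_2^\eps$. Because $v$ is only in $\LT$, not $\LI$, uniform continuity of $b_u$ cannot be applied pointwise. Integrability is nonetheless exactly what square-integrability of $v$ provides, and the global Lipschitz bound on $b_{(x,u)^2}$ in \cref{A1} removes any need for a dominated convergence argument. If only continuity of $b_u$ were assumed, one would instead use dominated convergence with the dominating function $2L\abs{v}$, since $\tilde b_u^\eps\to b_u[s]$ pointwise a.e.
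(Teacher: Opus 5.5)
The paper gives no proof of this lemma; it only cites Yong--Zhou. Your argument is correct and complete. It is the same mechanism the paper itself uses one page later for the adjoint in \cref{Lppsi}, applied to the state equation. Both arguments use averaged derivatives along $x+\theta(x^\eps-x)$ and $u+\theta\eps v$, the Lipschitz bounds on $b_x,b_u$ that follow from \cref{A1}, the estimate $\abs{x^\eps(t)-x(t)}\le C_1\abs{\eps}\norm{v}_1$ from \eqref{Ex}, and Gronwall. Using \cref{LExpH} here is legitimate, since its proof does not depend on \cref{Lxy}.

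Compared with the bare citation, your route gives a quantitative statement. Since $\norm{y}_\infty\le Le^{LT}\norm{v}_1$, your final bound reads $\sup_t\abs{z^\eps(t)}\le K\abs{\eps}\norm{v}_2^2$, with $K$ depending only on $T,L,x_0$. Nothing in the argument requires $\eps$ to be small, so you may take $\eps=1$ and $v=w$. This gives $\sup_t\abs{x^{u+w}(t)-x(t)-y_w(t)}\le K\norm{w}_2^2$, i.e.\ Fr{\'e}chet differentiability of $u\mapsto x$ from $\LT$ into $C([0,T];\RR^n)$ with a quadratic remainder. Inserting this rate into the term $C_1(1+\norm{u}_1)\abs{X^\eps(t)-y(t)}$ of \eqref{ppsi.4} and into \eqref{ppsi.3} would also make \cref{Lppsi}, and hence \eqref{SOD.3}, quantitative. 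A purely qualitative argument based on dominated convergence, as you note, needs only continuity and boundedness of $b_x,b_u$ rather than the second-derivative bounds, but it yields only the $o(1)$ statement.
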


\begin{proposition}\label{PFOD}
	Suppose that \Assume hold. Then $J$ is Fr{\'e}chet differentiable, and the Riesz representative $J'(u(\cdot)) \in \LT$ of its first derivative is given by
	\begin{equation}\label{FOD}
		J'(u(\cdot))(t) = H_u[t], \quad \forall u(\cdot) \in \LT, \text{a.e. }t\in[0,T].
	\end{equation}
\end{proposition}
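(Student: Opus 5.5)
The plan is to prove Fréchet differentiability directly, by expanding $J(u+v)-J(u)$ to first order with the adjoint trick and bounding the remainder by $C\norm{v}_2^2$. We do not pass through the Gâteaux limit of \cref{Lxy}. Fix $u(\cdot)\in\LT$ and $v(\cdot)\in\LT$, and let $\tilde x$ be the state corresponding to $u+v$. Set $\xi=\tilde x-x$, and let $p$ be the adjoint of $u$ from \eqref{adjoint}. By \cref{LExpH} (estimate \eqref{Ex}) we have $\sup_t\abs{\xi(t)}\le C_1\norm{v}_1\le C_1\sqrt{T}\norm{v}_2$.

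First, write
\begin{equation*}
J(u+v)-J(u)=\int_0^T\bigl[f(t,\tilde x,u+v)-f(t,x,u)\bigr]\dd t+g(\tilde x(T))-g(x(T)).
\end{equation*}
Next, add the zero term $\int_0^T p^\top\bigl[b(t,\tilde x,u+v)-b(t,x,u)-\dot\xi\bigr]\dd t$. Integrating $\int_0^T p^\top\dot\xi\,\dd t$ by parts, and using $\xi(0)=0$, $p(T)=g_x(x(T))$ and $\dot p=-H_x[t]$, gives
\begin{equation*}
J(u+v)-J(u)=\int_0^T\bigl[H(t,\tilde x,u+v,p)-H(t,x,u,p)-H_x[t]\xi\bigr]\dd t+g(\tilde x(T))-g(x(T))-g_x(x(T))^\top\xi(T).
\end{equation*}

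Taylor's formula with integral remainder then gives $H(t,\tilde x,u+v,p)-H(t,x,u,p)=H_x[t]\xi+H_u[t]v+R(t)$. Since $\abs{H_{(x,u)^2}}\le L(1+\abs{p})$ and $p$ is bounded by \eqref{Ep}, we get $\abs{R(t)}\le C(1+\norm{u}_1)(\abs{\xi(t)}^2+\abs{v(t)}^2)$. Similarly, $\abs{g(\tilde x(T))-g(x(T))-g_x(x(T))^\top\xi(T)}\le \tfrac L2\abs{\xi(T)}^2$. Combining these with the bound on $\xi$ yields
\begin{equation*}
\abs*{J(u+v)-J(u)-\int_0^T H_u[t]^\top v(t)\dd t}\le C(1+\norm{u}_1)\norm{v}_2^2=o(\norm{v}_2).
\end{equation*}

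It remains to check that $v\mapsto\int_0^T H_u[t]^\top v\,\dd t$ is a bounded functional on $\LT$, i.e.\ that $H_u[\cdot]\in\LT$. This holds because $\abs{H_u[t]}\le L\abs{p(t)}+L(\abs{x(t)}+\abs{u(t)}+1)$ by \assume, with $x,p$ bounded, so the bound is an $\LT$ function. The Riesz representation then gives \eqref{FOD}.

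The main obstacle is justifying the Taylor remainder pointwise with $v$ only in $\LT$, not $\LI$. The key point is that the uniform bound on $H_{(x,u)^2}$, which requires bounded $p$, makes the remainder quadratic in $\abs{v(t)}$ with no higher powers, so it integrates to $\norm{v}_2^2$. Measurability of all the integrands is routine.
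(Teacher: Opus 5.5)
Your proof is correct, but it takes a genuinely different route from the paper.

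The paper goes through the G\^ateaux derivative. It uses the linearized state $y$ from \cref{Lxy} and dominated convergence to compute the directional derivative. It then converts this to $\int_0^T\inner{H_u[t]}{v(t)}\dd t$ using the duality identity between \eqref{variable} and \eqref{adjoint}. Finally, it shows that $u\mapsto H_u[\cdot]$ is continuous from $\LT$ to $\LT$, via a pointwise difference estimate of the type in \eqref{EH}, and concludes by the standard fact that a continuous G\^ateaux derivative is a Fr\'echet derivative.

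You instead pair the adjoint with the exact increment $\xi=\tilde x-x$. This cancels the state-linear terms without ever linearizing the dynamics. The uniform bounds on $b_{(x,u)^2}$, $f_{(x,u)^2}$ and $g_{xx}$ then give the remainder bound $C(1+\norm{u}_1)\norm{v}_2^2$ directly. These bounds hold at every intermediate point, with $p$ frozen at the adjoint of $u$ and bounded. The Taylor formula with integral remainder is legitimate here because the Hessians are Lipschitz, hence continuous.

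What your route buys:
\begin{itemize}
\item It is shorter and self-contained: no \cref{Lxy}, no limit passage, and no G\^ateaux-to-Fr\'echet theorem.
\item It gives an explicit quadratic first-order Taylor estimate, uniform on $L^1$-bounded sets.
\item It makes transparent why no two-norm discrepancy arises at first order: the remainder is purely quadratic in $\abs{v(t)}$.
\end{itemize}

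What the paper's route buys is an explicit proof that $J'$ is continuous, together with the variation/duality machinery that is reused almost verbatim for the second derivative in \cref{Lppsi,TSOD}. Your uniform remainder bound also recovers local Lipschitz continuity of $J'$, by the standard three-point argument comparing expansions at $u$ and $u+d$ with $\norm{h}_2=\norm{d}_2$. So nothing is lost relative to the paper's conclusion.
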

\begin{proof}
	We have $H_u[\cdot] \in \LT$. For any $v(\cdot) \in \LT$, let $y(\cdot),u^\eps(\cdot),x^\eps(\cdot)$ be defined as in \cref{Lxy}. Set
	\begin{equation*}
		x^{\eps,\theta} = x + \theta (x^\eps-x), \quad u^{\eps,\theta} = u + \theta \eps v,
	\end{equation*}
	The definition of $J$ gives
	\begin{align*}
		& J(u^\eps(\cdot)) - J(u(\cdot)) \\
		=	& \int_{0}^{T}
			  \Bigl(
				f(t,x^\eps(t),u^\eps(t)) - f[t]
			  \Bigr) \dd t
			+ g(x^\eps(T)) - g(x(T)) \\
		=	& \int_{0}^{T}
			  \inner*{
				\int_{0}^{1} f_x(t,x^{\eps,\theta}(t),u^{\eps,\theta}(t)) \dd \theta
			  }{ x^\eps(t)-x(t) } \dd t
			+ \eps\int_{0}^{T}
		  		\inner*{ \int_{0}^{1} f_u(t,x^{\eps,\theta}(t),u^{\eps,\theta}(t)) \dd \theta
		  	  }{v(t)} \dd t \\
		  & + \inner*{
				\int_{0}^{1}g_x(x^{\eps,\theta}(T))\dd \theta
			  }{ (x^\eps(T)-x(T)) }.
	\end{align*}
	By \assume, \cref{Lxy}, and the Lebesgue dominated convergence theorem,
	\begin{equation}\label{PFOD.1}
		\underset{\eps \to 0}{\lim}
		\frac{ J(u^\eps(\cdot)) - J(u(\cdot)) }{\eps}
		= \int_{0}^{T}
		  \Bigl(
			\inner{f_x[t]}{y(t)}
			+ \inner{f_u[t]}{v(t)}
		  \Bigr)\dd t
		+ \inner{g_x(x(T))}{y(T)}.
	\end{equation}
	On the other hand, \eqref{variable} and \eqref{adjoint} yield
	\begin{equation}\label{PFOD.2}
		\begin{aligned}
			\int_{0}^{T} \inner{b_u[t]^\top p(t)}{v(t)} \dd t
			= & \int_{0}^{T} \inner{p(t)}{b_u[t] v(t)} \dd t \\
			= & \int_{0}^{T}\Bigl(
			\inner{p(t)}{\dot{y}(t)} - \inner{b_x[t]^\top p(t)}{y(t)}
			\Bigr) \dd t \\
			= & \int_{0}^{T}\Bigl(
			\inner{p(t)}{\dot{y}(t)} + \inner{\dot{p}(t)}{y(t)} + \inner{f_x[t]}{y(t)}
			\Bigr) \dd t \\
			= & \int_{0}^{T} \inner{f_x[t]}{y(t)} \dd t
			+ \inner{g_x(x(T))}{y(T)}.
		\end{aligned}
	\end{equation}
	Substituting \eqref{PFOD.2} into \eqref{PFOD.1} gives
	\begin{equation}\label{PFOD.3}
		\underset{\eps \to 0}{\lim}
		\frac{ J(u^\eps(\cdot)) - J(u(\cdot)) }{\eps}
		= \int_{0}^{T} \inner{H_u[t]}{v(t)} \dd t.
	\end{equation}
	Moreover, \cref{LExpH} and \assume imply
	\begin{equation}\label{EHu}
		\abs{H_u[t]}
		\le \abs{b_u[t]} \abs{p(t)} + \abs{f_u[t]}
		\le L [ (C_x+C_p)( 1+\norm{u}_1 ) + \abs{u(t)} + 1 ],
	\end{equation}
	This shows that $H_u[\cdot] \in \LT$. Hence, by \eqref{PFOD.3}, $J$ is G{\^a}teaux differentiable, and its G{\^a}teaux derivative has Riesz representative $J'(u(\cdot)) \in \LT$.
	
	We next prove that $J'$ is continuous. Let $(\bar{x}(\cdot),\bar{u(\cdot)},\bar{p}(\cdot))$ be another admissible triple. By \eqref{PFOD.3} and \assume,
	\begin{align*}
		\underset{\norm{u-\bar{u}}_2 \to 0}{\lim}
		\norm{ J'(u(\cdot)) - J'(\bar{u(\cdot)}) }_2^2
		=	& \underset{\norm{u-\bar{u}}_2 \to 0}{\lim}
			  \int_{0}^{T}\abs{ H_u[t] - \overline{H_u}[t] }^2 \dd t \\
		\overset{(a)}{\le}
			& \underset{\norm{u-\bar{u}}_2 \to 0}{\lim}
			  \int_{0}^{T} C_1^2 ( 1+\norm{u}_1 )^2
			  ( \norm{u-\bar{u}}_1 + \abs{u(t) - \bar{u}(t)} )^2 \dd t \\
		\overset{(b)}{\le}
			& \underset{\norm{u-\bar{u}}_2 \to 0}{\lim}
			  C_1^2 ( 1+\norm{u}_1 )^2 ( T+1 )^2 \norm{u-\bar{u}}_2^2 \\
		=	& 0.
	\end{align*}
	Here $(a)$ is obtained as in the derivation of \eqref{EH.2}, and $(b)$ follows from H{\"o}lder's inequality. The continuity of the G{\^a}teaux derivative implies that $J$ is Fr{\'e}chet differentiable.
\end{proof}

\subsection{Second-order differentiability of the cost functional}

For given $u(\cdot),v(\cdot) \in \LT$, let $y(\cdot)$ solve \eqref{variable}, and define the adjoint variation $\psi(\cdot) \in C([0,T];\RR^n)$ by
\begin{equation}\label{psi}
	\left\{
	\begin{aligned}
		& \dot{\psi}(t)
		= - b_x[t]^\top\psi(t) - H_{xx}[t]y(t) - H_{xu}[t]v(t), \quad \text{a.e. }t\in[0,T], \\
		& \psi(T) = g_{xx}(x(T))y(T).
	\end{aligned}
	\right.
\end{equation}
Equations \eqref{variable} and \eqref{psi} show that $y(\cdot)$ and $\psi(\cdot)$ depend linearly on the direction $v(\cdot)$.

\begin{lemma}\label{LEypsi}
	Suppose that \Assume hold. Let $(x(\cdot),u(\cdot),p(\cdot))$ and $(\bar{x}(\cdot),\bar{u(\cdot)},\bar{p}(\cdot))$ be admissible triples, and fix $v(\cdot) \in \LT$. Denote the corresponding state variations by $y(\cdot),\bar{y}(\cdot)$ and the adjoint variations by $\psi(\cdot),\bar{\psi}(\cdot)$. Then there exists a constant $C_2>0$, depending only on $T,L,x_0$, such that, for every $t \in [0,T]$,
	\begin{align}
		\label{Ey}
		&	\abs{y(t)} \le C_2\norm{v}_1, \quad
		&&	\abs{y(t)-\bar{y}(t)} \le C_2 \norm{u-\bar{u}}_2 \norm{v}_2, \\
		\label{Epsi}
		&	\abs{\psi(t)} \le C_2(1+\norm{u}_1) \norm{v}_1, \quad
		&&	\abs{\psi(t)-\bar{\psi}(t)} \le C_2 (1+\norm{u}_1) \norm{u-\bar{u}}_2 \norm{v}_2.
	\end{align}
	More precisely, $C_2$ is given by
	\begin{equation}\label{C2}
		\left\{
		\begin{aligned}
			& C_{y}
			= Le^{LT} \left[ ( C_1 + Le^{LT} + C_1LT e^{LT} )T + 1 \right], \\
			& K_\psi
			= e^{LT} [ LC_y + C_1 ( C_yT + 1 ) ], \\
			& C_\psi
			= \max\Bigl\{K_\psi,e^{LT}\bigl[LC_y(C_1T+1)+LK_\psi T(C_1T+1) \\
			&\hspace{9em}+C_1(C_yT^2+2C_yT+T+1)\bigr]\Bigr\}, \\
			& C_2 = \max \{ C_y, C_\psi \}.
		\end{aligned}
		\right.
	\end{equation}
\end{lemma}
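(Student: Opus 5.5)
The plan is to repeat the Gronwall-type argument of \cref{LExpH}, now applied to the linear equations \eqref{variable} and \eqref{psi}. Throughout, the bounds $\abs{b_x},\abs{b_u}\le L$ and the estimates \eqref{Ex}--\eqref{EH} of \cref{LExpH} serve as the input.

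\textbf{Step 1: bound on $y$.} Integrate \eqref{variable} and use $\abs{b_x[s]}+\abs{b_u[s]}\le L$ to get
\[
\abs{y(t)}\le L\int_0^t\abs{y(s)}\dd s+L\norm{v}_1 .
\]
Gronwall then gives $\abs{y(t)}\le Le^{LT}\norm{v}_1$.

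\textbf{Step 2: the difference $y-\bar y$.} Write
\[
\dot y-\dot{\bar y}=\overline{b_x}(y-\bar y)+(b_x-\overline{b_x})y+(b_u-\overline{b_u})v .
\]
Since $b_{(x,u)^2}$ is bounded by $L$, we have
\[
\abs{b_x[s]-\overline{b_x}[s]}+\abs{b_u[s]-\overline{b_u}[s]}\le L\bigl(C_1\norm{u-\bar u}_1+\abs{u(s)-\bar u(s)}\bigr),
\]
using \eqref{Ex} for the state difference. The forcing term is then bounded by
\[
\int_0^T L\bigl(C_1\norm{u-\bar u}_1+\abs{u-\bar u}\bigr)\bigl(\abs{y}+\abs{v}\bigr)\dd s .
\]
Insert $\abs{y}\le Le^{LT}\norm{v}_1$. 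Then apply Cauchy--Schwarz to $\int\abs{u-\bar u}\,\abs{v}$, and use $\norm{\cdot}_1\le\sqrt T\norm{\cdot}_2$ (or cruder bounds) everywhere else. This produces $\norm{u-\bar u}_2\norm{v}_2$ times a constant, and Gronwall yields a constant of the form $C_y$ in \eqref{C2}.

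A remark on the mixed norms: the first estimate in \eqref{Ey} is stated with $\norm{v}_1$, but the difference estimate must be in $L^2$, because the pointwise factor $\abs{u(s)-\bar u(s)}$ multiplies $\abs{v(s)}$. This is exactly why $L^2$ appears there.

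\textbf{Step 3: bound on $\psi$.} Integrate \eqref{psi} backward from $T$. The terminal value satisfies $\abs{\psi(T)}\le L\abs{y(T)}$, and by \eqref{EH} we have $\abs{H_{xx}},\abs{H_{xu}}\le C_1(1+\norm{u}_1)$. Hence
\[
\abs{\psi(t)}\le L\int_t^T\abs{\psi}\dd s+LC_y\norm{v}_1+C_1(1+\norm{u}_1)\bigl(TC_y\norm{v}_1+\norm{v}_1\bigr).
\]
Backward Gronwall gives $K_\psi(1+\norm{u}_1)\norm{v}_1$.

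\textbf{Step 4: the difference $\psi-\bar\psi$.} Split the difference of right-hand sides into the following terms:
\[
\overline{b_x}^\top(\psi-\bar\psi),\quad (b_x-\overline{b_x})^\top\psi,\quad (H_{xx}-\overline{H_{xx}})y,\quad \overline{H_{xx}}(y-\bar y),\quad (H_{xu}-\overline{H_{xu}})v .
\]
The terminal difference is handled the same way, via $\abs{g_{xx}(x(T))-g_{xx}(\bar x(T))}\le LC_1\norm{u-\bar u}_1$ together with Step 2. For the $H$-differences, use \eqref{EH}; the factor $\abs{u(s)-\bar u(s)}$ is paired with $\abs{y}$ or $\abs{v}$ and controlled by Cauchy--Schwarz as in Step 2. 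The term with $\psi$ uses Step 3. One point needs care: the coefficient $\overline{H_{xx}}$ carries $(1+\norm{\bar u}_1)$ instead of $(1+\norm u_1)$. I would avoid this by writing the split with the \emph{unbarred} coefficients multiplying the differences, i.e. $H_{xx}(y-\bar y)$ and $(H_{xx}-\overline{H_{xx}})\bar y$, so that only $(1+\norm{u}_1)$ appears. The same device applies to $b_x$.

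\textbf{Main obstacle.} Step 4 is the main obstacle, and the difficulty is bookkeeping rather than a new idea: keeping every product linear in $(1+\norm{u}_1)$, and ensuring that each term is reduced to $\norm{u-\bar u}_2\norm{v}_2$ with constants matching $C_\psi$. Finally, set $C_2=\max\{C_y,C_\psi\}$.
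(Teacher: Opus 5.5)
Your proposal is correct and is essentially the paper's proof. It uses the same four Gronwall steps, and it pairs $\abs{u(s)-\bar u(s)}$ with $\abs{v(s)}$ by Cauchy--Schwarz before applying $\norm{\cdot}_1\le\sqrt{T}\norm{\cdot}_2$. In Step~4 it uses exactly the paper's split $\overline{b_x}^\top(\psi-\bar\psi)+(b_x-\overline{b_x})^\top\psi+H_{xx}(y-\bar y)+(H_{xx}-\overline{H_{xx}})\bar y+(H_{xu}-\overline{H_{xu}})v$, which yields the constants in \eqref{C2}.

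One small caution: for the $b_x$ terms, keep the split you first listed rather than applying the ``same device'' literally. The bound $\abs{\overline{b_x}}\le L$ is uniform, and it is $\psi$, not $b_x$, whose bound depends on the control. Writing $(b_x-\overline{b_x})^\top\bar\psi$ would therefore bring in a factor $1+\norm{\bar u}_1$.
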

\begin{proof}
	Integrating \eqref{variable} and using \cref{A1}, we obtain
	\begin{equation*}
		\abs{y(t)}
		\le \int_{0}^{t}\abs{\dot{y}(s)}\dd s
		\le \int_{0}^{t}(
		\abs{b_x[s]} \abs{y(s)}
		+ \abs{b_u[s]}\abs{v(s)}
		) \dd s \\
		\le L \int_{0}^{t} \abs{y(s)} \dd s + L\norm{v}_1,
	\end{equation*}
	Gronwall's inequality therefore gives
	\begin{equation}\label{Ey.1}
		\abs{y(t)} \le Le^{Lt}\norm{v}_1 \le Le^{LT}\norm{v}_1.
	\end{equation}
	Similarly,
	\begin{align*}
		& \abs{\dot{y}(t)-\dot{\bar{y}}(t)} \\
		\le & \abs{b_x[t] y(t) - \overline{b_x}[t] \bar{y}(t)}
			+ \abs{b_u[t]- \overline{b_u}[t]} \abs{v(t)} \\
		\le & \abs{b_x[t]} \abs{y(t)-\bar{y}(t)}
			+ \abs{b_x[t] - \overline{b_x}[t]} \abs{\bar{y}(t)}
			+ L (
				\abs{x(t)-\bar{x}(t)}
				+ \abs{u(t)-\bar{u}(t)}
			  ) \abs{v(t)} \\
		\le & L \abs{y(t)-\bar{y}(t)}
			+ L ( \abs{x(t)-\bar{x}(t)} + \abs{u(t)-\bar{u}(t)} )
			  ( \abs{\bar{y}(t)} + \abs{v(t)} ) \\
		\le & L \abs{y(t)-\bar{y}(t)}
			+ L ( C_1\norm{u-\bar{u}}_1 + \abs{u(t)-\bar{u}(t)} )
			  ( Le^{LT}\norm{v}_1 + \abs{v(t)} ),
	\end{align*}
	H{\"o}lder's inequality then gives
	\begin{align*}
		& \abs{y(t)-\bar{y}(t)} \\
		\le & L \int_{0}^{T}
			  ( C_1\norm{u-\bar{u}}_1+\abs{u(s)-\bar{u}(s)} ) 
			  ( Le^{LT}\norm{v}_1+\abs{v(s)} ) \dd s
			+ L \int_{0}^{t} \abs{y(s)-\bar{y}(s)} \dd s \\
		\le & L
			  \left[
				( C_1 + Le^{LT} + C_1LT e^{LT} ) \norm{u-\bar{u}}_1 \norm{v}_1
				+ \norm{u-\bar{u}}_2 \norm{v}_2
			  \right]
			+ L \int_{0}^{t} \abs{y(s)-\bar{y}(s)} \dd s,
	\end{align*}
	Applying Gronwall's inequality gives
	\begin{equation}\label{Ey.3}
		\abs{y(t)} \le C_y\norm{v}_1, \quad
		\abs{y(t)-\bar{y}(t)} \le C_y \norm{u-\bar{u}}_2 \norm{v}_2,
	\end{equation}
	where $C_{y} = Le^{LT} \left[ ( C_1 + Le^{LT} + C_1LT e^{LT} )T + 1 \right]$.
	
	By \assume and \eqref{Ey.3},
	\begin{align*}
		\abs{\psi(t)}
		\le & \abs{g_{xx}(x(T))} \abs{y(T)}
			+ \int_{t}^{T}
			  \Bigl(
				\abs{b_x[s]}\abs{\psi(s)}
				+ \abs{H_{xx}[s]}\abs{y(s)}
				+ \abs{H_{xu}[s]}\abs{v(s)}
			  \Bigr) \dd s \\
		\le & LC_{y}\norm{v}_1
			+ L \int_{t}^{T} \abs{\psi(s)} \dd s
			+ C_1 \int_{0}^{T}
			  ( 1 + \norm{u}_1) ( C_{y}\norm{v}_1+\abs{v(s)} ) \dd s \\
		\le & [ C_1 ( 1 + \norm{u}_1 ) ( C_yT + 1 ) + LC_y ] \norm{v}_1
			+ L \int_{t}^{T} \abs{\psi(s)} \dd s,
	\end{align*}
	Applying Gronwall's inequality gives
	\begin{equation}\label{Epsi.1}
		\abs{\psi(t)}
		\le K_\psi ( 1 + \norm{u}_1 )\norm{v}_1,
	\end{equation}
	where $K_\psi = e^{LT} \left[ C_1(C_{y}T+1) + LC_{y} \right]$.
	Similarly,
	\begin{align*}
		\abs{\psi(T)-\bar{\psi}(T)} 
		\le & \abs{g_{xx}(x(T))-g_{xx}(\bar{x}(T))} \abs{y(T)}
			+ \abs{g_{xx}(\bar{x}(T))} \abs{y(T) - \bar{y}(T)} \\
		\le & LC_1C_y \norm{u-\bar{u}}_1 \norm{v}_1
			+ LC_y\norm{u-\bar{u}}_2 \norm{v}_2 \\
		\le & LC_y ( C_1T + 1 ) \norm{u-\bar{u}}_2 \norm{v}_2, \\
			  \abs{ \dot{\psi}(t)- \dot{\bar{\psi}}(t) }
		\le & \abs{ b_x[t]^\top\psi(t) - \overline{b_x}[t]^\top \bar{\psi}(t) }
			+ \abs{ H_{xx}[t]y(t) - \overline{H_{xx}}[t] \bar{y}(t) }
			+ \abs{ H_{xu}[t] - \overline{H_{xu}}[t] } \abs{v(t)} \\
		\le & \abs{ b_x[t] - \overline{b_x}[t] } \abs{\psi(t)}
			+ \abs{\overline{b_x}[t]} \abs{\psi(t)-\bar{\psi}(t)}
			+ \abs{ H_{xx}[t]} \abs{y(t)-\bar{y}(t)} \\
		  & + \abs{ H_{xx}[t] - \overline{H_{xx}}[t]} \abs{\bar{y}(t)}
			+ \abs{ H_{xu}[t] - \overline{H_{xu}}[t] }\abs{v(t)} \\
		\le & \Bigl\{
				C_1C_{y}\norm{u-\bar{u}}_2\norm{v}_2
				+ C_1\bigl( \norm{u-\bar{u}}_1 + \abs{u(t)-\bar{u}(t)} \bigr)
				  \bigl( C_y\norm{v}_1+\abs{v(t)} \bigr) \\
		  &		+ LK_\psi
				  \bigl( C_1\norm{u-\bar{u}}_1 + \abs{u(t)-\bar{u}(t)} \bigr)
				  \norm{v}_1
			  \Bigr\} (1+\norm{u}_1)
			+ L\abs{\psi(t)-\bar{\psi}(t)},
	\end{align*}
	H{\"o}lder's inequality gives
	\begin{align*}
		& \abs{\psi(t)-\bar{\psi}(t)} \\
		\le & LC_1C_y \norm{u-\bar{u}}_1\norm{v}_1
		+ LC_y\norm{u-\bar{u}}_2\norm{v}_2
		+ L \int_{t}^{T} \abs{\psi(s)-\bar{\psi}(s)} \dd s \\
		& + \int_{0}^{T}\Bigl\{
		C_1C_{y}\norm{u-\bar{u}}_2\norm{v}_2
		+ C_1 \bigl( \norm{u-\bar{u}}_1 + \abs{u(s)-\bar{u}(s)} \bigr)
		\bigl( C_y\norm{v}_1+\abs{v(s)} \bigr) \\
		&	\qquad
		+ LK_\psi \bigl( C_1\norm{u-\bar{u}}_1 + \abs{u(s)-\bar{u}(s)} \bigr)
		\norm{v}_1
		\Bigr\} (1+\norm{u}_1) \dd s \\
		\le & \Bigl\{ (1+\norm{u}_1) \Bigl[ C_1(C_yT+C_y+1) + LK_\psi(C_1T+1) \Bigr] + LC_1C_y \Bigr\}
		\norm{u-\bar{u}}_1\norm{v}_1 \\
		&	+ \bigl[C_1(C_yT+1)(1+\norm{u}_1)+LC_y\bigr] \norm{u-\bar{u}}_2\norm{v}_2
		+ L \int_{t}^{T} \abs{\psi(s)-\bar{\psi}(s)} \dd s \\
		\le & (1+\norm{u}_1)\Bigl\{ T\Bigl[ C_1(C_yT+C_y+1) + LK_\psi(C_1T+1) \Bigr] \\
		& \qquad + LC_y(C_1T+1)+C_1(C_yT+1) \Bigr\}
		\norm{u-\bar{u}}_2\norm{v}_2 \\
		& +  L \int_{t}^{T} \abs{\psi(s)-\bar{\psi}(s)} \dd s,
	\end{align*}
	Applying Gronwall's inequality gives
	\begin{equation}\label{Epsi.2}
		\begin{aligned}
		\abs{\psi(t)-\bar{\psi}(t)}
		\le {}& e^{LT}
			\Bigl\{
				(1+\norm{u}_1)
				\Bigl[ LC_y(C_1T+1)+LK_\psi T(C_1T+1) \\
		& \qquad + C_1(C_yT^2+2C_yT+T+1)\Bigr]
			\Bigr\}
		\norm{u-\bar{u}}_2\norm{v}_2.
		\end{aligned}
	\end{equation}
	Setting $C_\psi = \max\{K_\psi,e^{LT}[LC_y(C_1T+1)+LK_\psi T(C_1T+1)+C_1(C_yT^2+2C_yT+T+1)]\}$ and combining \eqref{Epsi.1}-\eqref{Epsi.2}, we obtain
	\begin{equation}\label{Epsi.3}
		\abs{\psi(t)} \le C_\psi(1+\norm{u}_1) \norm{v}_1, \quad
		\abs{\psi(t)-\bar{\psi}(t)} \le C_\psi (1+\norm{u}_1) \norm{u-\bar{u}}_2 \norm{v}_2
	\end{equation}
	Set
	$C_2=\max \{ C_{y}, C_{\psi} \}$. Combining \eqref{Ey.3} and \eqref{Epsi.3}, and noting that $t\in[0,T]$ is arbitrary, yields \eqref{Ey} and \eqref{Epsi}.
\end{proof}

\begin{lemma}\label{Lppsi}
	Suppose that \Assume hold. Let $(x(\cdot),u(\cdot),p(\cdot))$ be an admissible triple and fix $v(\cdot) \in \LT$. Set $u^\eps = u + \eps v$, let $x^\eps(\cdot),p^\eps(\cdot)$ be the state and adjoint corresponding to $u^\eps(\cdot)$, and let $y(\cdot), \psi(\cdot)$ be defined by \eqref{variable} and \eqref{psi}, respectively. Then
	\begin{equation}\label{ppsi}
		\underset{\eps \to 0}{\lim} \underset{t \in [0,T]}{\sup}
		\abs*{ \frac{ p^\eps(t)-p(t) }{\eps} - \psi(t) }
		= 0.
	\end{equation}
\end{lemma}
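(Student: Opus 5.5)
The plan is to write an equation for the remainder $r^\eps(t) \triangleq \frac{p^\eps(t)-p(t)}{\eps}-\psi(t)$ and close it with a backward Gronwall argument, using \cref{Lxy} for the state part. Write $z^\eps = (x^\eps - x)/\eps$. By \cref{Lxy}, $\sup_t\abs{z^\eps(t)-y(t)}\to 0$, and by \cref{LExpH}, $\sup_t\abs{x^\eps(t)-x(t)}\le C_1\eps\norm{v}_1$ and $\sup_t\abs{p^\eps(t)}\le C_1(1+\norm{u}_1+\eps\norm{v}_1)$. Introduce the interpolations $x^{\eps,\theta}=x+\theta(x^\eps-x)$ and $u^{\eps,\theta}=u+\theta\eps v$, as in the proof of \cref{PFOD}.

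First, subtract the adjoint equations \eqref{adjoint} for $p^\eps$ and $p$ and divide by $\eps$:
\begin{equation*}
\frac{\dot p^\eps-\dot p}{\eps}
= -b_x[t]^\top\frac{p^\eps-p}{\eps}
-\frac{(b_x^\eps[t]-b_x[t])^\top p^\eps + f_x^\eps[t]-f_x[t]}{\eps}.
\end{equation*}
The last quotient equals $\frac{H_x(t,x^\eps,u^\eps,p^\eps)-H_x(t,x,u,p^\eps)}{\eps}$. By the mean value theorem in integral form it equals
\begin{equation*}
\int_0^1 H_{xx}(t,x^{\eps,\theta},u^{\eps,\theta},p^\eps)\dd\theta\, z^\eps(t)
+\int_0^1 H_{xu}(t,x^{\eps,\theta},u^{\eps,\theta},p^\eps)\dd\theta\, v(t).
\end{equation*}
The terminal condition is handled the same way: $\frac{p^\eps(T)-p(T)}{\eps}=\int_0^1 g_{xx}(x^{\eps,\theta}(T))\dd\theta\, z^\eps(T)$. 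Subtracting \eqref{psi} then gives a linear backward ODE for $r^\eps$:
\begin{equation*}
\dot r^\eps = -b_x[t]^\top r^\eps - \rho^\eps(t), \qquad r^\eps(T)=\rho_T^\eps.
\end{equation*}
Here $\rho_T^\eps = \int_0^1 g_{xx}(x^{\eps,\theta}(T))\dd\theta\, z^\eps(T) - g_{xx}(x(T))y(T)$. The forcing is
\begin{equation*}
\rho^\eps = \Bigl(\int_0^1 H^{\eps,\theta}_{xx}\dd\theta\Bigr) z^\eps - H_{xx}[t]y + \Bigl(\int_0^1 H^{\eps,\theta}_{xu}\dd\theta - H_{xu}[t]\Bigr)v,
\end{equation*}
where $H^{\eps,\theta}_{i}$ is evaluated at $(t,x^{\eps,\theta},u^{\eps,\theta},p^\eps)$. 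Since $\abs{b_x}\le L$, backward Gronwall yields
\begin{equation*}
\sup_t\abs{r^\eps(t)}\le e^{LT}\Bigl(\abs{\rho_T^\eps}+\int_0^T\abs{\rho^\eps(s)}\dd s\Bigr).
\end{equation*}
So it suffices to show that $\rho_T^\eps\to0$ and $\rho^\eps\to0$ in $L^1$.

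The terminal term is easy. The Lipschitz bound on $g_{xx}$ gives $\abs{g_{xx}(x^{\eps,\theta}(T))-g_{xx}(x(T))}\le LC_1\eps\norm{v}_1$, and $\abs{g_{xx}}\le L$ together with $z^\eps(T)\to y(T)$ handles the rest.

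For $\rho^\eps$, split $H^{\eps,\theta}_i - H_i[t] = b_i^{\eps,\theta\top}(p^\eps-p) + \bigl(b_i^{\eps,\theta}-b_i[t]\bigr)^\top p + \bigl(f_i^{\eps,\theta}-f_i[t]\bigr)$, for $i\in\{xx,xu\}$.
\begin{itemize}
\item The first piece is bounded by $LC_1(1+\norm{u}_1+\norm{v}_1)\eps\norm{v}_1$, using \eqref{Ep}.
\item The other two pieces are bounded by $L(1+\sup\abs{p})\bigl(C_1\eps\norm{v}_1+\eps\abs{v(t)}\bigr)$, using the Lipschitz continuity of $b_{(x,u)^2}$ and $f_{(x,u)^2}$.
\end{itemize}
Hence $\abs{\int_0^1 H^{\eps,\theta}_i\dd\theta - H_i[t]}\le C\eps(1+\abs{v(t)})$. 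Write the $z^\eps$ term as $\bigl(\int_0^1 H^{\eps,\theta}_{xx}\dd\theta\bigr)(z^\eps-y)+\bigl(\int_0^1 H^{\eps,\theta}_{xx}\dd\theta-H_{xx}[t]\bigr)y$. The first part is $O(\sup\abs{z^\eps-y})$, since $H_{xx}$ is uniformly bounded for small $\eps$. The second part is $O(\eps(1+\abs{v}))\sup\abs{y}$. The $v$ term is bounded by $C\eps(1+\abs{v(t)})\abs{v(t)}$, whose integral is at most $C\eps(\norm{v}_1+\norm{v}_2^2)$, finite since $v\in\LT$. All terms thus vanish in $L^1$ as $\eps\to0$.

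The main obstacle is precisely this $v$-weighted term: $\abs{v(t)}$ is only square integrable, so one must make sure the Hessian difference contributes at most a factor of order $\abs{v(t)}$ and not a higher power. Exact Lipschitz continuity of the second derivatives in $u$ (from \cref{A1,A2}) is what guarantees this. With only continuity, I would fall back on dominated convergence: the differences tend to $0$ pointwise a.e. and are dominated by $C\abs{v}\in L^1$.
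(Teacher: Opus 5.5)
Your proof is correct and follows essentially the same route as the paper: an integral mean-value representation of $(p^\eps-p)/\eps$, Lipschitz bounds on the second derivatives giving errors of order $\eps(1+\abs{v(t)})$ that remain integrable after multiplication by $\abs{v(t)}$ because $v\in\LT$, \cref{Lxy} to control $z^\eps-y$, and a backward Gronwall argument. The difference is only cosmetic. You freeze $p^\eps$ in the mean-value step, so the remainder equation has exactly the coefficient $b_x[t]$. The paper also interpolates in $p$, and therefore has to absorb an extra cross term $(b_x^{\eps,\theta}-b_x)^\top P^\eps$ using the bound on $P^\eps$ from \cref{LExpH}.
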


\begin{proof}
	For $\eps \ne 0$, set
	\begin{equation}\label{simple.1}
		X^\eps = \frac{x^\eps - x}{\eps}, \quad
		P^\eps = \frac{p^\eps - p}{\eps}, \quad
		x^{\eps,\theta} = x + \theta (x^\eps-x), \quad
		u^{\eps,\theta} = u + \theta \eps v, \quad
		p^{\eps,\theta} = p + \theta (p^\eps - p),
	\end{equation}
	For $i \in \{ u, xx,xu,ux,uu \}$, write
	\begin{equation}\label{simple.2}
		H_{i}^{\eps,\theta}[t]
		= H_{i}( t, x^{\eps,\theta}(t), u^{\eps,\theta}(t), p^{\eps,\theta}(t) ).
	\end{equation}
	Also write $b_j^{\eps,\theta}[t]=b_j(t,x^{\eps,\theta}(t),u^{\eps,\theta}(t))$, $j\in\{x,u\}$. Equation \eqref{adjoint} gives
	\begin{equation}\label{ppsi.1}
		P^\eps(T)
		= \frac{
			g_x(x^\varepsilon(T))-g_x(x(T))
		  }{\eps}
		= \int_{0}^{1}g_{xx}( x^{\eps,\theta}(T) )
		  X^\varepsilon(T) \dd \theta
	\end{equation}
	and
	\begin{equation}\label{ppsi.2}
		\begin{aligned}
			\dot{P^\eps}(t)
			= & - \frac{
					H_x ( t,x^\eps(t),u^\eps(t),p^\eps(t) )
					- H_x[t]
				  }{\eps} \\
			= & - \int_{0}^{1}
				  \Bigl[
					H_{xx}(t,x^{\eps,\theta}(t),u^{\eps,\theta}(t),p^{\eps,\theta}(t))
					X^\eps(t)
					+ H_{xu}(t,x^{\eps,\theta}(t),u^{\eps,\theta}(t),p^{\eps,\theta}(t))
					v (t) \\
			  &	\quad
			  		+ b_{x}(t,x^{\eps,\theta}(t),u^{\eps,\theta}(t))^\top P^\eps(t)
				  \Bigr] \dd \theta.
		\end{aligned}
	\end{equation}
	By \cref{LExpH}, \cref{LEypsi}, \eqref{ppsi.1}, and \eqref{ppsi.2},
	\begin{equation}\label{ppsi.3}
		\begin{aligned}
			& \abs{ \psi(T) - P^\eps(T) } \\
			=	& \abs*{
					g_{xx}(x(T))y(T)
					- \int_{0}^{1}g_{xx}( x^{\eps,\theta}(T) )
					  X^\varepsilon(T) \dd \theta
				  } \\
			\le & \int_{0}^{1} \Bigl[
					\abs{
						g_{xx}(x(T)) - g_{xx}( x^{\eps,\theta}(T) )
					} \abs{ y(T) }
					+ \abs{ g_{xx}( x^{\eps,\theta}(T) ) }
					  \abs{ y(T) - X^\eps(T) }
				  \Bigr] \dd \theta \\
			\le & L \abs{ x(T) - x^\eps(T) } \abs{ y(T) }
				+ L \abs{ y(T) - X^\eps(T) } \\
			\le & \abs{\eps} L C_1C_2 \norm{v}_1^2
				+ L \abs{ y(T) - X^\eps(T) }
		\end{aligned}
	\end{equation}
	and
	\begin{align}
			& \abs{ \dot{\psi}(t) - \dot{P^\eps}(t)} \notag \\*
			\le & \int_{0}^{1}
				  \Big[
					\abs{
						H_{xx}^{\eps,\theta}[t] X^\eps(t) - H_{xx}[t] y(t)
					}
					+ \abs{
						H_{xu}^{\eps,\theta}[t] - H_{xu}[t] 
					} \abs{v(t)}
				  \Big] \dd \theta \notag \\
			  & + \int_0^1\abs{
			  		b_x(t,x^{\eps,\theta}(t),u^{\eps,\theta}(t))^\top P^\eps(t)
			  		- b_x[t]^\top\psi(t)
			  	  }\dd\theta \notag \\
			\le & \int_{0}^{1}\Bigl[
					\abs{ H_{xx}^{\eps,\theta}[t] - H_{xx}[t] }
					\abs{ X^\eps(t) }
					+ \abs{ H_{xx}[t] } \abs{ X^\eps(t) - y(t) }
					+ \abs{
						H_{xu}^{\eps,\theta}[t] - H_{xu}[t] 
					  } \abs{v(t)} \notag \\
				&	+ \abs{
			  			b_x(t,x^{\eps,\theta}(t),u^{\eps,\theta}(t))^\top P^\eps(t)
			  			- b_x[t]^\top\psi(t)
			  	  	  }
			  	  \Bigr] \dd \theta \notag \\
			\le & \int_{0}^{1}\Bigl[
					L\Big(
					  (
						\abs{ x^{\eps,\theta}(t) - x(t) }
						+ \abs{ u^{\eps,\theta}(t) - u(t) }
					  ) ( \abs{p(t)} + 1 )
					  + \abs{ p^{\eps,\theta}(t) - p(t) }
					\Big) ( \abs{X^\eps(t)} + \abs{v(t)} ) \notag \\
				&	+ C_1( 1 + \norm{u}_1 ) \abs{ X^\eps(t) - y(t) }
					+ L (
						\abs{ x^{\eps,\theta}(t) - x(t) }
						+ \abs{ u^{\eps,\theta}(t) - u(t) }
				  	  ) \abs{P^\eps(t)}
				  \Bigr] \dd \theta \notag \\
			  & + L \abs{ \psi(t) - P^\eps(t) } \notag \\
			\le & \abs{\eps} L
				  \Big[
					( C_1\norm{v}_1 + \abs{v(t)} ) (1 + C_1 + C_1\norm{u}_1)
					+ C_1( 1 + \norm{u}_1 ) \norm{v}_1
				  \Big] ( C_1\norm{v}_1 + \abs{v(t)} ) \notag \\
			  & + \abs{\eps} LC_1 ( 1 + \norm{u}_1 )
				  ( C_1\norm{v}_1 + \abs{v(t)} ) \norm{v}_1 \notag \\
			  & + C_1( 1 + \norm{u}_1 ) \abs{ X^\eps(t) - y(t) }
				+ L \abs{ \psi(t) - P^\eps(t) }.\label{ppsi.4}
	\end{align}
	Combining \cref{Lxy}, \eqref{ppsi.3}, and \eqref{ppsi.4} with Gronwall's inequality gives
	\begin{equation*}
		\underset{\eps \to 0}{\lim} \underset{t \in [0,T]}{\sup}
		\abs{ \psi(t) - P^\eps(t) }
		= 0,
	\end{equation*}
	which proves \eqref{ppsi}.
\end{proof}

\begin{theorem}[Second-order differentiability]\label{TSOD}
	Suppose that \Assume hold. Let $(x(\cdot),u(\cdot),p(\cdot))$ be an admissible triple, let $v(\cdot) \in \LT$, and let $y(\cdot),\psi(\cdot)$ denote the corresponding state and adjoint variations, respectively. Then the following assertions hold.
	\begin{enumerate}
		\item $J$ is twice G{\^a}teaux differentiable, and its second derivative $J''(u(\cdot)) \in \mathbb{L}(\LT)$ is given by
		\begin{equation}\label{SOD}
			(J''(u(\cdot))v(\cdot))(t)
			= H_{ux}[t]y(t)
			+ H_{uu}[t]v(t)
			+ b_u[t]^\top \psi(t), \quad
			\text{a.e. }t\in [0,T].
		\end{equation}
		
		\item For any $w(\cdot) \in \LT$, let $y_w(\cdot)$ be the state variation corresponding to the direction $w(\cdot)$. The second derivative $J''(u(\cdot))$ of $J$ satisfies
		\begin{equation}\label{SOD-represation}
			\inner{J''(u(\cdot))v}{w}_{\LT}
			= \int_{0}^{T}
			  \left(\begin{matrix}
				y_w(t) \\ w(t)
			  \end{matrix}\right)^\top
			  H_{(x,u)^2}[t]
			  \left(\begin{matrix}
				y(t) \\ v(t)
			  \end{matrix}\right) \dd t
			+ y_w(T)^\top g_{xx}(x(T)) y(T).
		\end{equation}
		
		\item If $u(\cdot) \in \LB (\beta \in [2,\infty])$, then $J''(u(\cdot)) \in \mathbb{L}(\LB)$.
	\end{enumerate}
\end{theorem}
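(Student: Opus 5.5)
The plan is to differentiate the first-derivative representation \eqref{FOD} along a direction. Fix $v(\cdot)\in\LT$ and set $u^\eps=u+\eps v$, with state $x^\eps$ and adjoint $p^\eps$. By \cref{PFOD}, $J'(u^\eps(\cdot))(t)=H_u(t,x^\eps(t),u^\eps(t),p^\eps(t))$, and $H_u=b_u^\top p+f_u$. With the notation \eqref{simple.1}--\eqref{simple.2}, the mean value theorem gives
\begin{equation*}
\frac{J'(u^\eps)(t)-J'(u)(t)}{\eps}=\int_0^1\Bigl[H_{ux}^{\eps,\theta}[t]X^\eps(t)+H_{uu}^{\eps,\theta}[t]v(t)\Bigr]\dd\theta+\int_0^1 b_u^{\eps,\theta}[t]^\top\dd\theta\,P^\eps(t).
\end{equation*}
The first step is to show that this converges in $\LT$ to the right-hand side of \eqref{SOD}. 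The pointwise differences are estimated as in \eqref{ppsi.4}. The Lipschitz bounds on $b_{(x,u)^2}$ and $f_{(x,u)^2}$ give
\[
\abs{H_{i}^{\eps,\theta}[t]-H_i[t]}\lesssim \abs{\eps}\,(C_1\norm{v}_1+\abs{v(t)})(1+\abs{p(t)})+\abs{p^{\eps}(t)-p(t)},
\]
and $\abs{b_u^{\eps,\theta}-b_u}$ is controlled in the same way. Multiplied by $\abs{X^\eps}+\abs{v}$, these terms contain $\abs{\eps}\abs{v(t)}^2$, which is not square integrable when $v\in\LT$ only.

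This is the main obstacle. I would get around it by proving Gâteaux differentiability in the weak form, which suffices for an operator in $\mathbb{L}(\LT)$. That is, I would show that $\eps^{-1}\inner{J'(u^\eps)-J'(u)}{w}_{\LT}$ converges to $\inner{\text{RHS of }\eqref{SOD}}{w}$ for each $w\in\LT$. Alternatively, I would show $\LT$-convergence by dominated convergence: the integrands converge a.e., because $X^\eps\to y$ and $P^\eps\to\psi$ uniformly by \cref{Lxy,Lppsi}, and the Hessians are continuous. They are also dominated by $C(\abs{v(t)}+\norm{v}_1)$, since $\abs{H_i}\le C_1(1+\norm{u}_1)$, $\sup_t\abs{X^\eps}$ and $\sup_t\abs{P^\eps}$ are bounded uniformly in $\eps$, and $\abs{b_u}\le L$. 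Dominated convergence in $\LT$ then gives strong convergence, which avoids the $\abs{v}^2$ issue entirely. Linearity of $v\mapsto(y,\psi)$ and the bounds $\abs{y}\le C_2\norm{v}_1$ and $\abs{\psi}\le C_2(1+\norm{u}_1)\norm{v}_1$ from \cref{LEypsi} give
\[
\norm{J''(u)v}_2\le \bigl[C_1(1+\norm{u}_1)(C_2\sqrt T T^{1/2}+1)+LC_2(1+\norm{u}_1)\sqrt T\,T^{1/2}\bigr]\norm{v}_2,
\]
so $J''(u)\in\mathbb{L}(\LT)$. This proves (i).

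For (ii), I would pair \eqref{SOD} with $w$ and treat the term $\int\inner{b_u^\top\psi}{w}=\int\inner{\psi}{b_uw}$ by the duality argument of \eqref{PFOD.2}. Since $b_uw=\dot y_w-b_xy_w$ and $y_w(0)=0$, integration by parts gives
\begin{align*}
\int_0^T\inner{\psi}{\dot y_w-b_xy_w}\dd t
&=\inner{\psi(T)}{y_w(T)}-\int_0^T\inner{\dot\psi+b_x^\top\psi}{y_w}\dd t\\
&=y_w(T)^\top g_{xx}y(T)+\int_0^T\inner{H_{xx}y+H_{xu}v}{y_w}\dd t,
\end{align*}
using \eqref{psi}. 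Adding the terms $\inner{H_{ux}y+H_{uu}v}{w}$ assembles the block quadratic form \eqref{SOD-represation}.

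For (iii), suppose $u\in\LB$ and $v\in\LB$ with $\beta\ge2$. The functions $y$ and $\psi$ are bounded by constants times $\norm{v}_1\le T^{1-1/\beta}\norm{v}_\beta$. The coefficients $H_{ux}$ and $b_u$ are bounded in $\LI$ by \eqref{EH}, and $H_{uu}v$ has $\LB$ norm at most $C_1(1+\norm{u}_1)\norm{v}_\beta$. Hence $\norm{J''(u)v}_\beta\le C\norm{v}_\beta$. For $\beta=\infty$ the same bounds hold with essential suprema.
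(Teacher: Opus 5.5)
Your proposal is correct and follows essentially the paper's route. It uses the mean-value representation of the difference quotient of $H_u$, dominated convergence to get around the non-square-integrable $\abs{\eps}\abs{v(t)}^2$ contribution, integration by parts against $\psi$ for (ii), and direct $L^\beta$ bounds for (iii). The one difference is that you apply dominated convergence to all three terms at once, bounding $H_i^{\eps,\theta}$ at the perturbed points via the uniform bound on $p^{\eps,\theta}$; the paper instead uses explicit Lipschitz estimates for the $b_u^\top P^\eps$ and $H_{ux}X^\eps$ terms and reserves dominated convergence for $H_{uu}v$. Your ``weak form'' fallback would only give weak G{\^a}teaux differentiability and still needs the same dominated-convergence step, so the strong $\LT$ route is the one that proves (i) as stated.
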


\begin{proof}
	(i) By \cref{PFOD}, $J$ is Fr{\'e}chet differentiable, with derivative given by \eqref{FOD}. Let $u^\eps,x^\eps,p^\eps$ be as in \cref{Lppsi}, and retain the notation in \eqref{simple.1} and \eqref{simple.2}.
	First, as in the derivation of \eqref{ppsi.2}, we obtain
	\begin{equation}\label{SOD.1}
		\frac{
			H_u( \cdot, x^\eps(\cdot), u^\eps(\cdot), p^\eps(\cdot) )
			- H_u[\cdot]
		}{\eps} \\
		= \int_{0}^{1}
		  \Bigl[
			b_{u}^{\eps,\theta}[\cdot]^\top P^{\eps}(\cdot)
			+ H_{ux}^{\eps,\theta}[\cdot] X^\eps(\cdot)
			+ H_{uu}^{\eps,\theta}[\cdot] v(\cdot)
		  \Bigr] \dd \theta.
	\end{equation}
	Next, as in the derivation of \eqref{ppsi.4}, we obtain
	\begin{align*}
		& \norm*{
			\int_{0}^{1} b_{u}^{\eps,\theta}[\cdot]^\top P^{\eps}(\cdot) \dd \theta
			- b_u[\cdot]^\top \psi(\cdot)
		  }_2 \\
		\le	& \norm*{
				  \int_{0}^{1}
				  \abs{
					b_{u}^{\eps,\theta}[t]^\top P^{\eps}(t)
					- b_u[t]^\top \psi(t)
				  } \dd \theta
			  }_2 \\
		\le & L\norm*{
				  \abs{\eps} C_1 ( 1 + \norm{u}_1 )
				  ( C_1\norm{v}_1 + \abs{v(\cdot)} ) \norm{v}_1
				  + \abs{ \psi(\cdot) - P^\eps(\cdot) } 
			  }_2 \\
		\overset{(a)}{\le}
			& L\left[
				\abs{\eps} C_1 ( 1 + \norm{u}_1 )
				( C_1\sqrt{T}\norm{v}_1 + \norm{v}_2 ) \norm{v}_1
				+ \norm{ \psi - P^\eps }_2
			  \right],
	\end{align*}
	Here $(a)$ follows from Jensen's inequality and the triangle inequality. Hence, \cref{Lppsi} gives
	\begin{equation}\label{SOD.2}
		\underset{\eps \to 0}{\lim}
		\norm*{
			\int_{0}^{1} b_{u}^{\eps,\theta}[\cdot]^\top P^{\eps}(\cdot) \dd \theta
			- b_u[\cdot]^\top \psi(\cdot)
		}_2 = 0.
	\end{equation}
	Similarly,
	\begin{align*}
		& \norm*{
			\int_{0}^{1} H_{ux}^{\eps,\theta}[\cdot] X^\eps(\cdot) \dd \theta
			- H_{ux}[\cdot] y(\cdot)
		}_2 \\
		\le & \norm*{
				\int_{0}^{1}
				\abs{ H_{ux}^{\eps,\theta}[\cdot] X^\eps(\cdot)
					  - H_{ux}[\cdot] y(\cdot) }
				\dd \theta
			  }_2 \\
		\le & \norm*{
				\abs{\eps} LC_1
				\Big[
					( C_1\norm{v}_1 + \abs{v(\cdot)} ) (1 + C_1 + C_1\norm{u}_1)
					+ C_1( 1 + \norm{u}_1 ) \norm{v}_1
				\Big] \norm{v}_1
			  }_2 \\
		  & + C_1( 1 + \norm{u}_1 ) \norm{ X^\eps - y }_2 \\
		\le & \abs{\eps} LC_1
			  \Big[
			  	( C_1\sqrt T\norm{v}_1 + \norm{v}_2 ) (1 + C_1 + C_1\norm{u}_1)
			  	+ C_1\sqrt T( 1 + \norm{u}_1 ) \norm{v}_1
			  \Big] \norm{v}_1 \\
		  & + C_1( 1 + \norm{u}_1 ) \norm{ X^\eps - y }_2,
	\end{align*}
	By Lemma \ref{Lxy},
	\begin{equation}\label{SOD.3}
		\underset{\eps \to 0}{\lim}
		\norm*{
			\int_{0}^{1} H_{ux}^{\eps,\theta}[\cdot] X^\eps(\cdot) \dd \theta
			- H_{ux}[\cdot] y(\cdot)
		}_2 = 0.
	\end{equation}
	For the last term in the integral on the right-hand side of \eqref{SOD.1}, H{\"o}lder's inequality gives
	\begin{equation}\label{SOD.4}
		\norm*{
			\int_{0}^{1} H_{uu}^{\eps,\theta}[\cdot] v(\cdot) \dd \theta
			- H_{uu}[\cdot] v(\cdot)
		}_2 \\
		\le \norm*{
				\left(
				  \int_{0}^{1}
				  \abs{ H_{uu}^{\eps,\theta}[\cdot] - H_{uu}[\cdot] }
				  \dd \theta
				\right)^2 \abs{ v(\cdot) }^2 
			}_1^\frac{1}{2}.
	\end{equation}
	By \cref{LExpH} and the boundedness of $p^{\eps,\theta}$, for $0<\abs{\eps}\le1$ there exists a constant $K > 0$, independent of $\eps,\theta,t$, such that
	\begin{equation}\label{SOD.5}
		\begin{aligned}
			& \left(
				\int_{0}^{1}
				\abs{ H_{uu}^{\eps,\theta}[t] - H_{uu}[t] }
				\dd \theta
			  \right)^2 \abs{ v(t) }^2 \\
			\le & \left[
					\int_{0}^{1}
					\abs{ H_{uu}^{\eps,\theta}[t] }
					+ \abs{H_{uu}[t]}
					\dd \theta
				  \right]^2 \abs{ v(t) }^2 \\
			\le & \left[
					\int_{0}^{1}
					L(\abs{p^{\eps,\theta}(t)} + 1)
					\dd \theta
					+ C_1 (1 + \norm{u}_1)
				  \right]^2 \abs{v(t)}^2 \\
			\le & K (1 + \norm{u}_1)^2 \abs{v(t)}^2, \quad \text{a.e. }t\in [0,T]
		\end{aligned}
	\end{equation}
	and
	\begin{equation}\label{SOD.6}
	\begin{aligned}
		& \underset{\eps \to 0}{\lim}
		  \left(
			\int_{0}^{1}
		  	\abs{ H_{uu}^{\eps,\theta}[t] - H_{uu}[t] }
		  	\dd \theta
		  \right)^2 \abs{ v(t) }^2 \\
		\le & \underset{\eps \to 0}{\lim}
			  \int_{0}^{1}
			  \abs{ H_{uu}^{\eps,\theta}[t] - H_{uu}[t] }^2
			  \dd \theta \abs{ v(t) }^2 \\
		\le & \underset{\eps \to 0}{\lim}
			  \eps^2 L^2
			  \Big[
			  	( C_1\norm{v}_1 + \abs{v(t)} ) (1 + C_1 + C_1\norm{u}_1)
			  	+ C_1( 1 + \norm{u}_1 ) \norm{v}_1
			  \Big]^2 \abs{v(t)}^2 \\
		=	& 0.
	\end{aligned}
	\end{equation}
	The Lebesgue dominated convergence theorem and \eqref{SOD.4}-\eqref{SOD.6} therefore give
	\begin{equation}\label{SOD.7}
		\underset{\eps \to 0}{\lim}
		\norm*{
			\int_{0}^{1} H_{uu}^{\eps,\theta}[\cdot] v(\cdot) \dd \theta
			- H_{uu}[\cdot] v(\cdot)
		}_2 = 0.
	\end{equation}
	Combining \eqref{SOD.1}-\eqref{SOD.3} and \eqref{SOD.7}, we obtain
	\begin{equation}\label{SOD.8}
		\underset{\eps \to 0}{\lim}
		\norm*{
			\frac{
				H_u( \cdot,x^\eps(\cdot),u^\eps(\cdot),p^\eps(\cdot) )
				- H_u[\cdot]
			}{\eps}
			- ( b_u[\cdot]^\top \psi(\cdot)
				+ H_{ux}[\cdot] y(\cdot)
				+ H_{uu}[\cdot] v(\cdot) )
		}_2
		= 0,
	\end{equation}
	Since $y(\cdot)$ and $\psi(\cdot)$ depend linearly on $v(\cdot)$, the limit above defines a linear operator $D^2J(u(\cdot)): \LT \to \LT$ for each control $u(\cdot)$ by
	\begin{equation}\label{SOD.9}
		\underset{\eps\to0}{\lim}
		\frac{
			H_u(
			\cdot,x^\eps(\cdot),u^\eps(\cdot),p^\eps(\cdot)
			)
			- H_u[\cdot]
		}{\eps}
		= D^2J(u(\cdot))v(\cdot).
	\end{equation}
	Combining \eqref{SOD.8}-\eqref{SOD.9} and using \cref{A1} and \cref{LExpH,LEypsi}, we obtain
	\begin{equation}\label{SOD.10}
		\begin{aligned}
			& \norm{ D^2J( u(\cdot) ) v(\cdot)}_2 \\
			\le & \norm{ b_u[\cdot]^\top \psi(\cdot)}_2
				+ \norm{
					H_{ux}[\cdot]y(\cdot)
				  }_2
				+ \norm{
					H_{uu}[\cdot]v(\cdot)
				  }_2 \\
			\le & LC_2\sqrt{T} ( 1 + \norm{u}_1 ) \norm{v}_1
				+ C_1C_2\sqrt{T} ( 1 + \norm{u}_1 ) \norm{v}_1
				+ C_1 ( 1 + \norm{u}_1 ) \norm{v}_2 \\
			\le	& ( 1 + \norm{u}_1 )\left[
					TC_2 ( C_1 + L ) + C_1
				  \right]\norm{v}_2,
		\end{aligned}
	\end{equation}
	Thus, $D^2J(u(\cdot))$ is bounded. It follows from \eqref{SOD.8}-\eqref{SOD.10} that $J'$ is G{\^a}teaux differentiable, that is, $J$ is twice G{\^a}teaux differentiable, and $J''(u(\cdot)) \in \mathbb{L}( \LT)$ is given by \eqref{SOD}.
	
	(ii) Let $w(\cdot) \in \LT$, and let $y_w(\cdot)$ be the corresponding state variation. Equations \eqref{variable} and \eqref{psi} give
	\begin{equation}\label{SOD.11}
		\begin{aligned}
			& \int_{0}^{T}\inner{b_u[t]^\top \psi(t)}{w(t)} \dd t \\
			=	& \int_{0}^{T} \inner{\psi(t)}{b_u[t] w(t)} \dd t \\
			=	& \int_{0}^{T} \inner{\psi(t)}{\dot{y_w}(t)} \dd t
				- \int_{0}^{T} \inner{\psi(t)}{b_x[t] y_w(t)} \dd t \\
			=	& \int_{0}^{T} \inner{\psi(t)}{\dot{y_w}(t)} \dd t
				- \int_{0}^{T} \inner{b_x[t]^\top \psi(t)}{y_w(t)} \dd t \\
			=	& \int_{0}^{T} \inner{\psi(t)}{\dot{y_w}(t)} \dd t
				+ \int_{0}^{T} \inner{\dot{\psi}(t)}{y_w(t)} \dd t
				+ \int_{0}^{T} \inner{H_{xx}[t] y(t) + H_{xu}[t] v(t)}{y_w(t)} \dd t \\
			=	& \int_{0}^{T} \inner{H_{xx}[t] y(t) + H_{xu}[t] v(t)}{y_w(t)} \dd t
				+ \inner{g_{xx}(x(T)) y(T)}{y_w(T)}.
		\end{aligned}
	\end{equation}
	Combining \eqref{SOD.11} and \eqref{SOD}, we obtain
	\begin{equation*}
		\inner{J''(u(\cdot)) v(\cdot)}{w(\cdot)}_{\LT} \\
		= \int_{0}^{T}
		  \left(\begin{matrix}
			y_w(t) \\ w(t)
		  \end{matrix}\right)^\top
		  H_{(x,u)^2}[t]
		  \left(\begin{matrix}
			y(t) \\ v(t)
		  \end{matrix}\right) \dd t
		+ y_w(T)^\top g_{xx}(x(T)) y(T).
	\end{equation*}

	(iii) For $u(\cdot),v(\cdot) \in \LB$, we have
	\begin{align*}
		& \norm{J''(u(\cdot))v(\cdot)}_{\beta} \\
		\le & \norm{ b_u[\cdot]^\top \psi(\cdot) }_{\beta}
			+ \norm{ H_{ux}[\cdot] y(\cdot) }_{\beta}
			+ \norm{ H_{uu}[\cdot]v(\cdot) }_{\beta} \\
		\le & ( 1 + \norm{u}_1 )
			  \left[
			  	T^\frac{1}{\beta} C_2 ( L + C_1 )\norm{v}_1
			  	+ C_1 \norm{v}_{\beta}
			  \right],
	\end{align*}
	Together with $J''(u(\cdot)) \in \mathbb{L}(\LT)$, this implies that $J''(u(\cdot)) \in \mathbb{L}(\LB)$.
\end{proof}

\begin{remark}\label{RSSC}
	By \cref{PFOD,TSOD}, the stationarity condition \eqref{FNC} and the second-order coercivity condition \eqref{SSC} are equivalent to
	\begin{equation*}
		\left\{
		\begin{aligned}
			& \inner{J'(\bar{u(\cdot)})}{v(\cdot)}_{\LT} = 0, \quad 
			  \forall v(\cdot)\in \LT, \\
			& \inner{J''(\bar{u(\cdot)})v(\cdot)}{v(\cdot)}_{\LT}
			\ge \alpha\norm{v}_2^2, \quad \forall v(\cdot) \in \LT.
		\end{aligned}
		\right.
	\end{equation*}
\end{remark}

\subsection{Continuity of the second derivative}

In general, for any fixed $\beta \in [2,\infty)$, the second-order G{\^a}teaux derivative $J'': \LB \to \mathbb{L}(\LB,\LB)$ of $J$ is not necessarily continuous even at an optimal control. The following example illustrates this.
\begin{example}\label{E-continuous}
	Let $T=1$ and consider the following one-dimensional optimal control problem:
	\begin{equation*}
		\begin{aligned}
			& \underset{u(\cdot)\in \LT}{\mathrm{min}} J(u(\cdot))
			= \frac{1}{2}\int_{0}^{T}\bigl[x(t)^2+\sin^2(u(t))\bigr]\dd t \\
			& \hspace{0.5em} \mathrm{s.t.}
			  \left\{
			  \begin{aligned}
			  	& \dot{x}(t) = u(t), \quad t \in [0,T], \\
			  	& x(0) = 0,
			  \end{aligned}
			  \right.
		\end{aligned}
	\end{equation*}
	This example satisfies \assume and has the solution $\bar{u(\cdot)}=0$. Theorem \ref{TSOD} gives
	\begin{equation*}
		J''(u(\cdot))v(\cdot)
		= \psi(\cdot) + v(\cdot)\cos(2u(\cdot)),
	\end{equation*}
	where, since $b_x=H_{xu}=0$ and $H_{xx}=b_u=1$, equations \eqref{variable} and \eqref{psi} reduce to
	\begin{equation*}
		\dot{y}(t)=v(t), \quad y(0)=0, \qquad
		\dot{\psi}(t)=-y(t), \quad \psi(1)=0.
	\end{equation*}
	Hence, for $t\in[0,1]$,
	\begin{equation*}
		y(t)=\int_0^t v(r)\dd r, \qquad
		\psi(t)=\int_t^1\int_0^s v(r)\dd r\dd s.
	\end{equation*}
	Fix $\beta \in [2,\infty)$. For each integer $k > 1$, define the control
	\begin{equation*}
		u^k(t) =
		\begin{cases}
			\frac{\pi}{4}, & t \in [0,\frac{1}{k}], \\
			0, & t > \frac{1}{k}.
		\end{cases}
	\end{equation*}
	Substituting the expression for $\psi$ into the difference of the second derivatives gives
	\begin{align*}
		\norm{
			J''(u^k(\cdot))-J''(\bar{u(\cdot)})
		}_{\mathbb{L}(\LB)}
		\ge & \frac{
			\norm{
				J''(u^k(\cdot))u^k-J''(\bar{u(\cdot)})u^k
			}_\beta
		}{\norm{u^k}_\beta} \\
		= & \frac{
			\norm{(\cos(2u^k(\cdot))-1)u^k(\cdot)}_\beta
		}{\norm{u^k}_\beta}
		= 1.
	\end{align*}
	Thus, as $k\to \infty$, $\norm{u^k-\bar{u}}_\beta = \frac{\pi}{4k^{\frac{1}{\beta}}} \to 0$, whereas
	\begin{equation*}
		\norm{J''(u^k(\cdot))-J''(\bar{u(\cdot)})}_
		{\mathbb{L}(\LB)}
		\ge 1.
	\end{equation*}
	This shows that $J'':\LB \to \mathbb{L}(\LB)$ is not continuous at the optimal control $\bar{u(\cdot)}$.
\end{example}

The next theorem addresses the continuity of the second-order G{\^a}teaux derivative of $J$.

\begin{theorem}\label{TSODC}
	Suppose that \Assume hold, and let $\beta \in [2,\infty]$.
	\begin{enumerate}
		\item There exists a constant $C_3 > 0$, depending only on $T,L,x_0$, such that, for all $u(\cdot),\bar{u(\cdot)} \in \LI$,
		\begin{equation}\label{SODCI}
			\norm{
				J''(u(\cdot))-J''(\bar{u(\cdot)})
			}_{\mathbb{L}(\LB,\LB)}
			\le C_3(1+\norm{u}_1)\norm{u-\bar{u}}_\infty.
		\end{equation}
		More precisely, $C_3$ is given by
		\begin{equation}\label{C3}
			C_3
			= T C_2 [ L(C_1T + 2) + C_1(T + 2) ] 
			+ C_1 ( T+1 ).
		\end{equation}
		
		\item If $b_{uu},f_{uu}$ are independent of $u$, then there exists a constant $C_4 > 0$, depending only on $T,L,x_0,\beta$, such that, for all $u(\cdot),\bar{u(\cdot)} \in \LB$,
		\begin{equation}\label{SODC}
			\norm{
				J''(u(\cdot))-J''(\bar{u(\cdot)})
			}_{\mathbb{L}(\LB,\LB)}
			\le C_4(1+\norm{u}_1)\norm{u-\bar{u}}_\beta.
		\end{equation}
		More precisely, $C_4$ is given by
		\begin{equation}\label{C4}
			C_4
			= T^{1-\frac{1}{\beta}}
			\Bigl\{
			C_2 [ L(C_1T + 2) + C_1(T + 2) ]
			+ C_1
			\Bigr\}.
		\end{equation}
	\end{enumerate}
\end{theorem}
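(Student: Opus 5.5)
We will work directly from the operator representation \eqref{SOD} in \cref{TSOD}. For fixed $v(\cdot)\in\LB$, let $y,\bar y$ and $\psi,\bar\psi$ be the state and adjoint variations associated with $u,\bar u$. Then pointwise
\begin{align*}
	(J''(u)v - J''(\bar u)v)(t)
	={}& \bigl(b_u[t]-\overline{b_u}[t]\bigr)^\top\psi(t) + \overline{b_u}[t]^\top\bigl(\psi(t)-\bar\psi(t)\bigr) \\
	& + \bigl(H_{ux}[t]-\overline{H_{ux}}[t]\bigr)y(t) + \overline{H_{ux}}[t]\bigl(y(t)-\bar y(t)\bigr) \\
	& + \bigl(H_{uu}[t]-\overline{H_{uu}}[t]\bigr)v(t).
\end{align*}
We then take the $\LB$ norm of each of the five terms separately. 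The plan is to bound the first four terms by $(\text{const})(1+\norm{u}_1)\norm{u-\bar u}_{\beta}\norm{v}_\beta$, up to powers of $T$. The tools are \cref{LExpH} and \cref{LEypsi}, together with the Lipschitz bound $\abs{b_u[t]-\overline{b_u}[t]}\le L(C_1\norm{u-\bar u}_1+\abs{u(t)-\bar u(t)})$ from \cref{A1} and \eqref{Ex}.

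The terms involving $y,\psi$ are harmless, because $y,\psi,y-\bar y,\psi-\bar\psi$ are uniformly bounded in $t$ by \eqref{Ey}--\eqref{Epsi}.
\begin{itemize}
	\item For $\overline{b_u}^\top(\psi-\bar\psi)$ and $\overline{H_{ux}}(y-\bar y)$, use $\sup_t\abs{\psi-\bar\psi}\le C_2(1+\norm{u}_1)\norm{u-\bar u}_2\norm{v}_2$ and its analogue for $y-\bar y$. Take the $\LB$ norm of a constant, which gives a factor $T^{1/\beta}$. Then pass from $\norm{\cdot}_2$ to $\norm{\cdot}_\beta$ by Hölder.
	\item For $(b_u-\overline{b_u})^\top\psi$ and $(H_{ux}-\overline{H_{ux}})y$, use $\sup_t\abs{\psi}\le C_2(1+\norm{u}_1)\norm{v}_1$. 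The difference factor contains $\norm{u-\bar u}_1+\abs{u(t)-\bar u(t)}$, whose $\LB$ norm is at most $(T^{1/\beta}T^{1-1/\beta}+1)\norm{u-\bar u}_\beta$.
\end{itemize}
In both cases every factor is controlled in $\LB$, with $L^1$ quantities converted by $\norm{\cdot}_1\le T^{1-1/\beta}\norm{\cdot}_\beta$. This is where the constants $C_2[L(C_1T+2)+C_1(T+2)]$ arise. For part (i) we use the cruder bound $\norm{\cdot}_\beta\le T^{1/\beta}\norm{\cdot}_\infty$ and $T^{1/\beta}\le\max(1,T)$-type factors to get the form \eqref{C3} with $\norm{u-\bar u}_\infty$.

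The term $(H_{uu}-\overline{H_{uu}})v$ is the real obstacle, and it is what separates (i) from (ii). \cref{Example}~\ref{E-continuous} shows it cannot be controlled by $\norm{u-\bar u}_\beta$ in general. The difficulty is the pointwise factor $\abs{u(t)-\bar u(t)}$ in \eqref{EH}, multiplying $v(t)$.

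In case (i), $u,\bar u\in\LI$, so we bound $\abs{u(t)-\bar u(t)}\le\norm{u-\bar u}_\infty$ and $\norm{u-\bar u}_1\le T\norm{u-\bar u}_\infty$. Then $\norm{(H_{uu}-\overline{H_{uu}})v}_\beta\le C_1(1+\norm{u}_1)(T+1)\norm{u-\bar u}_\infty\norm{v}_\beta$, which is the $C_1(T+1)$ term in \eqref{C3}.

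In case (ii), $b_{uu}$ and $f_{uu}$ do not depend on $u$. Redoing the estimate behind \eqref{EH.2} gives
\[
H_{uu}[t]-\overline{H_{uu}}[t]=b_{uu}(t,x)^\top p-b_{uu}(t,\bar x)^\top\bar p+f_{uu}(t,x)-f_{uu}(t,\bar x),
\]
which involves only $\abs{x-\bar x}$ and $\abs{p-\bar p}$. These are uniformly bounded by $C_1(1+\norm{u}_1)\norm{u-\bar u}_1$. The term is therefore bounded by $C_1(1+\norm{u}_1)T^{1-1/\beta}\norm{u-\bar u}_\beta\norm{v}_\beta$.

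Summing all five terms and dividing by $\norm{v}_\beta$ yields \eqref{SODC} with $C_4$ as in \eqref{C4}, after factoring out $T^{1-1/\beta}$. The only care needed is bookkeeping of the powers of $T$ so that the stated constants come out exactly.
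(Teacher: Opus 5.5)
Your route is the paper's. You use the same splitting of $(J''(u)-J''(\bar u))v$ into the $b_u^\top\psi$, $H_{ux}y$ and $H_{uu}v$ differences, the same uniform-in-$t$ bounds from \cref{LExpH,LEypsi}, and the same observation that the pointwise factor $\abs{u(t)-\bar u(t)}$ in $H_{uu}-\overline{H_{uu}}$ is what separates (i) from (ii).

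One step, however, does not give the inequality in the stated form. You split the middle term as $(H_{ux}-\overline{H_{ux}})y+\overline{H_{ux}}(y-\bar y)$. But $\overline{H_{ux}}$ is built from $\bar p$, so \cref{LExpH} only yields $\abs{\overline{H_{ux}}[t]}\le C_1(1+\norm{\bar u}_1)$. That term is therefore bounded by $C_1C_2(1+\norm{\bar u}_1)\norm{u-\bar u}_2\norm{v}_2$, not by a multiple of $(1+\norm{u}_1)$. Writing $\norm{\bar u}_1\le\norm{u}_1+\norm{u-\bar u}_1$ creates a term quadratic in $u-\bar u$, which cannot be absorbed into $C_3(1+\norm{u}_1)\norm{u-\bar u}_\infty$ uniformly over all $u,\bar u$.

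The one-sided factor $(1+\norm{u}_1)$ is not cosmetic: \cref{TSODP,TQCI} interchange $u$ and $\bar u$ precisely to obtain the fixed factor $(1+\norm{\bar u}_1)$. The fix is to pair the other way, as the paper does,
\[
H_{ux}y-\overline{H_{ux}}\,\bar y=H_{ux}(y-\bar y)+\bigl(H_{ux}-\overline{H_{ux}}\bigr)\bar y .
\]
Then use $\abs{H_{ux}[t]}\le C_1(1+\norm{u}_1)$, the control-independent bound $\abs{\bar y(t)}\le C_2\norm{v}_1$, and \eqref{EH}. Your split of the $b_u^\top\psi$ term is already correct, since it keeps $\psi$ at $u$ and uses only $\abs{\overline{b_u}}\le L$. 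The bound you take for $H_{uu}-\overline{H_{uu}}$ in (ii) from the proof of \eqref{EH} is likewise already in the $(1+\norm{u}_1)$ form.

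On the constants, no $\max(1,T)$-type estimates are needed, and using them would not reproduce \eqref{C3}. The conversions $\norm{w}_1\le T^{1-1/\beta}\norm{w}_\beta$, $\norm{w}_2\le T^{1/2-1/\beta}\norm{w}_\beta$ and $\norm{w}_\beta\le T^{1/\beta}\norm{w}_\infty$ make the powers of $T$ combine exactly. For example, in (i) the $b_u^\top\psi$ terms give
\begin{align*}
&LC_2(1+\norm{u}_1)\bigl(T^{1/\beta}C_1\norm{u-\bar u}_1+\norm{u-\bar u}_\beta\bigr)\norm{v}_1
+T^{1/\beta}LC_2(1+\norm{u}_1)\norm{u-\bar u}_2\norm{v}_2\\
&\qquad\le TLC_2(C_1T+2)(1+\norm{u}_1)\norm{u-\bar u}_\infty\norm{v}_\beta .
\end{align*}
The correctly paired $H_{ux}y$ terms give $TC_1C_2(T+2)$, and together with $C_1(T+1)$ from the $H_{uu}$ term this is exactly $C_3$. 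In (ii), the same conversions, now ending in $\norm{u-\bar u}_\beta$, produce the common factor $T^{1-1/\beta}$ in \eqref{C4}.
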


\begin{proof}
	Let $(x(\cdot),u(\cdot),p(\cdot))$ and $(\bar{x}(\cdot),\bar{u(\cdot)},\bar{p}(\cdot))$ be admissible triples, and let $y(\cdot),\bar{y}(\cdot),\psi(\cdot),\bar{\psi}$ be defined as in \cref{LEypsi}. Theorem \ref{TSOD} and the triangle inequality give
	\begin{equation}\label{SODC.1}
		\begin{aligned}
			& \norm{
				( J''(u(\cdot)) - J''(\bar{u(\cdot)}) ) v(\cdot)
			}_{\beta} \\
			\le & \norm{
				b_u[\cdot]^\top \psi(\cdot)
				- \overline{b_u}[\cdot]^\top \bar{\psi}(\cdot)
			}_{\beta}
			+ \norm{
				H_{ux}[\cdot]y(\cdot)
				- \overline{H_{ux}}[\cdot]
				\bar{y}(\cdot)
			}_{\beta}
			+ \norm{
				( H_{uu}[\cdot]
				- \overline{H_{uu}}[\cdot] )
				v(\cdot)
			}_{\beta},
		\end{aligned}
	\end{equation}
	Denote the three terms on the right-hand side by $\Delta_1,\Delta_2,\Delta_3$, respectively. For $\beta \in [2,\infty]$, \cref{LExpH,LEypsi} give
	\begin{align}
		\notag
		\Delta_1
		\le & \norm{b_u-\overline{b_u}}_{\beta}
		\norm{\psi}_\infty
		+ \norm{\overline{b_u}}_{\beta}
		\norm{\psi-\bar{\psi}}_\infty \\
		\label{SODC.2}
		\le & L C_2 ( 1+\norm{u}_1 ) 
		( T^\frac{1}{\beta} C_1 \norm{u-\bar{u}}_1
		+ \norm{u-\bar{u}}_{\beta} )
		\norm{v}_1
		+ T^\frac{1}{\beta} L C_2 ( 1+\norm{u}_1 )
		\norm{u-\bar{u}}_2 \norm{v}_2 \\
		\notag
		\le & T L C_2 
		( TC_1 + 2 ) ( 1+\norm{u}_1 )
		\norm{u-\bar{u}}_\infty \norm{v}_\beta , \\[0.5em]
		\notag
		\Delta_2
		\le & T^{\frac{1}{\beta}}
		\norm{H_{ux}}_\infty \norm{y-\bar{y}}_\infty
		+ \norm{H_{ux}-\overline{H_{ux}}}_{\beta}
		\norm{\bar{y}}_{\infty} \\ 
		\label{SODC.3}
		\le & C_1 C_2 (1+\norm{u}_1)
		\left[
		T^{\frac{1}{\beta}} \norm{u-\bar{u}}_2 \norm{v}_2
		+ ( T^{\frac{1}{\beta}} \norm{u-\bar{u}}_1
		+ \norm{u-\bar{u}}_{\beta} )
		\norm{v}_1
		\right] \\
		\notag
		\le & T C_1 C_2 
		( T + 2 ) (1+\norm{u}_1)
		\norm{u-\bar{u}}_\infty \norm{v}_\beta , \\[0.5em]
		\notag
		\Delta_3
		\le & \norm{
			H_{uu} - \overline{H_{uu}}
		}_{\infty}
		\norm{v}_{\beta} \\
		\label{SODC.4}
		\le	& C_1 (1+\norm{u}_1)
		( \norm{u-\bar{u}}_1 + \norm{u-\bar{u}}_\infty )
		\norm{v}_\beta \\
		\notag
		\le & C_1 (1+\norm{u}_1) ( T+1 )
		\norm{u-\bar{u}}_\infty \norm{v}_\beta,
	\end{align}
	where
	\begin{equation*}
		\frac{1}{\beta} = 0 \quad \text{when} \quad \beta = \infty.
	\end{equation*}
	With $C_3$ defined by \eqref{C3}, combining \eqref{SODC.1}-\eqref{SODC.4} gives
	\begin{equation}\label{SODC.5}
		\begin{aligned}
			\norm{
				J''(u(\cdot))-J''(\bar{u(\cdot)})
			}_{\mathbb{L}(\LB,\LB)}
			=	& \underset{v(\cdot)\in \LB\setminus\{0\}}{\sup}
			\frac{
				\norm{
					( J''(u(\cdot)) - J''(\bar{u(\cdot)}) )
					v(\cdot)
				}_{\beta}
			}{\norm{v}_\beta} \\
			\le & C_3 (1+\norm{u}_1) \norm{u-\bar{u}}_\infty,
		\end{aligned}
	\end{equation}
	which proves \eqref{SODCI}.
	
	(ii) Arguing as in (i), we obtain
	\begin{align}
		\label{SODC.6}
		\Delta_1
		\le & T^{1-\frac{1}{\beta}} L C_2 
		( TC_1 + 2 ) ( 1+\norm{u}_1 )
		\norm{u-\bar{u}}_\beta \norm{v}_\beta, \\
		\label{SODC.7}
		\Delta_2
		\le & T^{1-\frac{1}{\beta}} C_1 C_2 
		( T + 2 ) (1+\norm{u}_1)
		\norm{u-\bar{u}}_\beta \norm{v}_\beta.
	\end{align}
	Since $b_{uu},f_{uu}$ are independent of $u$, \eqref{EH} and its proof give
	\begin{equation}\label{SODC.8}
		\Delta_3
		\le \norm{
			H_{uu} - \overline{H_{uu}}
		}_{\infty}
		\norm{v}_{\beta}
		\le	C_1 (1+\norm{u}_1) \norm{u-\bar{u}}_1
		\norm{v}_\beta
		\le T^{1-\frac{1}{\beta}}C_1 (1+\norm{u}_1) 
		\norm{u-\bar{u}}_\beta \norm{v}_\beta.
	\end{equation}
	As in \eqref{SODC.5}, with $C_4$ defined by \eqref{C4}, combining \eqref{SODC.1} and \eqref{SODC.6}-\eqref{SODC.8} yields \eqref{SODC}.
\end{proof}



\section{Equivalent characterizations of the second-order coercivity condition}

This section establishes equivalent characterizations of the second-order coercivity condition \eqref{SSC} for use in the convergence analysis of Newton's method.

\subsection{The Riccati equation and uniform convexity}

Let $\SS^k$ denote the linear space of real symmetric $k\times k$ matrices. In this section, we consider the following linear-quadratic optimal control problem:
\begin{equation}\label{LQ}
	\begin{aligned}
		& \underset{v(\cdot)\in  \LT}{\mathrm{min}} \;
		J_L(v(\cdot))
		= \frac{1}{2}\int_{0}^{T}\left\{
		\inner*{  
			\left(\begin{matrix}
				Q(t) & S(t)^\top \\
				S(t) & R(t)
			\end{matrix}\right)
			\left(\begin{matrix}
				y(t) \\
				v(t)
			\end{matrix}\right)}{
			\left(\begin{matrix}
				y(t) \\
				v(t)
			\end{matrix}\right)
		} \right\} \dd t
		+ \frac{1}{2}\inner{Gy(T)}{y(T)} \\
		& \hspace{1em}\mathrm{s.t.}\hspace{1em}\left\{
		\begin{aligned}
			& \dot{y}(t)=A(t)y(t)+B(t)v(t), 
			\quad t\in[0,T], \\
			& y(0) = 0,
		\end{aligned}
		\right.
	\end{aligned}
\end{equation}
where $A(\cdot) \in L^\infty(0,T;\RR^{n\times n})$, $Q(\cdot) \in L^\infty(0,T;\SS^n)$, $B(\cdot) \in L^\infty(0,T;\RR^{n \times m})$, $R(\cdot) \in L^\infty(0,T;\SS^m)$, $G \in \SS^n$, and $S(\cdot) \in L^\infty(0,T;\RR^{m\times n})$. Consider the Riccati equation
\begin{equation}\label{Riccati}
	\left\{
	\begin{aligned}
		& \dot{P}(t)+P(t)A+A^\top P(t)+Q
		- (B^\top P(t)+S)^\top R^{-1}(B^\top P(t)+S)
		=0, \\
		& \qquad \text{a.e. }t\in[0,T], \\
		& P(T) = G,
	\end{aligned}
	\right.
\end{equation}
and let $\varphi:[0,T]\to \RR^n$ satisfy
\begin{equation}\label{varphi}
	\left\{
	\begin{aligned}
		& \dot{\varphi}(t)
		+ [ (A-BR^{-1}S)^\top - PBR^{-1}B^\top ] \varphi(t)
		= 0,
		\quad \text{a.e. }t \in [0,T], \\
		& \varphi(T) = 0.
	\end{aligned}
	\right.
\end{equation}
The following lemma relates problem \eqref{LQ} to the Riccati equation \eqref{Riccati}.
\begin{lemma}\label{L-feedback-solution}
	If there exists a constant $\lambda>0$ such that
	\begin{equation}\label{UPD}
		R(t) \ge \lambda I, \quad \text{a.e. }t\in[0,T],
	\end{equation}
	then the Riccati equation \eqref{Riccati} admits a solution $P(\cdot)$ on the entire interval $[0,T]$ if and only if problem \eqref{LQ} with zero initial state has a unique optimal control. In this case, the optimal control has the feedback representation
	\begin{equation}\label{feedback-solution}
		\bar{v}(t)
		=
		- R^{-1} [(B^\top P(t)+S)\bar{y}(t) + B^\top \varphi(t)],
		\quad \text{a.e. }t \in [0,T].
	\end{equation}
	where $\bar{y}$ is the state corresponding to $\bar{v}$ in problem \eqref{LQ}.
\end{lemma}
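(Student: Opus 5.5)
The plan is to prove the two implications separately, with completion of squares along the Riccati solution as the main tool. First note that problem \eqref{LQ} starts from $y(0)=0$ and has no linear terms. Hence \eqref{varphi} is a linear homogeneous equation with $\varphi(T)=0$, so $\varphi\equiv0$, and \eqref{feedback-solution} reduces to $\bar v=-R^{-1}(B^\top P+S)\bar y$. Because $J_L(0)=0$ and $J_L$ is a quadratic form, unique solvability is the same as saying that $v=0$ is the unique minimizer. That in turn means $J_L(v)\ge0$ for all $v(\cdot)\in\LT$, with equality only at $v=0$.

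\emph{Sufficiency.} Let $P(\cdot)$ solve \eqref{Riccati} on $[0,T]$. For any $v(\cdot)\in\LT$ with state $y$, differentiate $\inner{P(t)y(t)}{y(t)}$ and integrate over $[0,T]$, using $y(0)=0$ and $P(T)=G$. Substituting \eqref{Riccati} and completing the square gives
\begin{equation*}
J_L(v(\cdot))=\frac12\int_0^T\inner*{R\bigl(v+R^{-1}(B^\top P+S)y\bigr)}{v+R^{-1}(B^\top P+S)y}\dd t\ge0 .
\end{equation*}
Since $R\ge\lambda I$, equality holds iff $v=-R^{-1}(B^\top P+S)y$ a.e. Inserting this closed loop into the state equation gives a linear homogeneous ODE with $y(0)=0$, so $y\equiv0$ and $v=0$. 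This proves uniqueness and the feedback form \eqref{feedback-solution}.

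\emph{Necessity.} I would argue in two stages. First, upgrade uniqueness to uniform convexity. By \eqref{SOD-represation}-type calculus, $J_L(v)=\frac12\inner{(\mathcal R+\mathcal K)v}{v}_{\LT}$. Here $\mathcal R$ is multiplication by $R$, and $\mathcal K$ is compact, because $v\mapsto y$ is compact from $\LT$ into $C([0,T];\RR^n)$. Since $\mathcal R\ge\lambda I$ is invertible, the self-adjoint operator $\mathcal R+\mathcal K\ge0$ is Fredholm of index zero. Any element of its kernel is again a minimizer, so uniqueness forces the kernel to be trivial. Then $\mathcal R+\mathcal K$ is invertible, and together with nonnegativity this gives $J_L(v)\ge\delta\norm{v}_2^2$ for some $\delta>0$.

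Second, show that uniform convexity prevents blow-up of the Riccati solution. Since $R^{-1}$ is bounded, \eqref{Riccati} has a local solution near $T$ on a maximal interval $(t_0,T]$. The hardest step is to show that $P$ stays bounded there. For the upper bound, I would use the identity from the sufficiency part on $[s,T]$ with initial state $\xi$. It shows that $\frac12\inner{P(s)\xi}{\xi}$ equals the value of the problem started at $(s,\xi)$. Taking $v=0$ then gives $P(s)\le CI$ uniformly in $s$. For the lower bound, I would use the coercivity. Controls on $[s,T]$ extended by zero to $[0,s]$ keep $y=0$ there, so $J_L\ge\delta\norm{v}_2^2$ also holds for the problem on $[s,T]$ from zero state, with the same $\delta$. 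Writing the state from $(s,\xi)$ as the free response plus the zero-initial response, and applying Cauchy--Schwarz with Gronwall bounds, yields $J^{s,\xi}(v)\ge\frac{\delta}{2}\norm{v}_2^2-C\abs{\xi}^2$, with $C$ independent of $s$. Hence $P(s)\ge-2CI$. The bound on $P$ is therefore uniform on $(t_0,T]$, so $P$ extends past $t_0$ unless $t_0<0$, and $P$ exists on all of $[0,T]$. The feedback representation then follows from the sufficiency part.
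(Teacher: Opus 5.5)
Your proof is correct, but it follows a different route from the paper. The paper gives no argument of its own and refers to \cite[Theorem~2.9 (Chapter~6)]{YongZhou1999}. That result concerns unique solvability from arbitrary initial pairs $(s,\xi)$. Its classical proof runs through the linear Hamiltonian system: one writes $P=YX^{-1}$ and shows $X(s)$ cannot become singular, since a singular $X(s)$ would produce a nonzero optimal control from $(s,0)$. Applying it under the weaker hypothesis here (uniqueness from the zero state at time $0$ only) needs at least your zero-extension remark, which the paper leaves implicit.

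You bypass the first-order optimality system entirely. The Fredholm step upgrades uniqueness to uniform convexity. The identity $\tfrac12\inner{P(s)\xi}{\xi}=\inf_v J^{s,\xi}(v)$ on the maximal interval then gives two-sided bounds on $P$, uniform in $s$, which exclude blow-up.

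The two routes trade off as follows. The Hamiltonian route is shorter once the first-order theory is available, and it needs only uniqueness from each $(s,0)$, not coercivity. Your route is self-contained and gives explicit bounds on $P$ in terms of the coercivity constant. It also shows in passing that uniqueness and uniform convexity are equivalent when $R\ge\lambda I$. Your a priori bound argument is also of the type used in the stochastic setting of \cite{SunLiYong2016}, whereas the Hamiltonian decoupling does not transfer as directly.

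The paper uses this lemma only inside \cref{P-equivalence}, where uniform convexity is already assumed, so your second stage alone would suffice there. Your observation that $\varphi\equiv0$ is also correct. It reduces \eqref{feedback-solution} to $\bar v=-R^{-1}(B^\top P+S)\bar y$, and in fact $\bar v=0$, so the representation part is immediate.
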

\begin{proof}
	The proof follows the argument in \cite[Theorem 2.9 (Chapter 6)]{YongZhou1999}.
\end{proof}

\begin{definition}\label{D-uniformly-convex}
	\begin{enumerate}
		\item For problem \eqref{LQ}, if there exists a constant $\lambda>0$ such that
		\begin{equation}\label{uniformly-convex-1}
			J_L(v(\cdot)) \ge \lambda\norm{v}_2^2, \quad \forall v(\cdot) \in  \LT,
		\end{equation}
		then $J_L$ is said to be uniformly convex in the control for the zero initial state, and $\lambda$ is called a uniform convexity constant.
		\item If condition \eqref{UPD} holds and the Riccati equation \eqref{Riccati} admits a solution, then such a solution is called strongly regular.
	\end{enumerate}
\end{definition}

\subsection{The Riccati equation and second-order coercivity}

We prove that $J_L$ is uniformly convex if and only if the Riccati equation \eqref{Riccati} admits a strongly regular solution, following the approach in \cite{SunLiYong2016}.
\begin{lemma}\label{L-simplify-J_L}
	For any $\Theta(\cdot)\in L^\infty(0,T;\RR^{m\times n})$, if $P(\cdot)$ solves the Lyapunov equation
	\begin{equation}\label{Lyapunov}
		\left\{
		\begin{aligned}
			& \dot{P} + P(A+B\Theta) + (A+B\Theta)^\top P
			+ \Theta^\top R\Theta + S^\top\Theta
			+ \Theta^\top S + Q
			= 0, \quad t\in[0,T], \\
			& P(T)=G,
		\end{aligned}
		\right.
	\end{equation}
	and $Y(\cdot)$ solves the closed-loop system
	\begin{equation}\label{close-loop-system}
		\left\{
		\begin{aligned}
			& \dot{Y}(t)
			= (A(t)+B(t)\Theta(t))Y(t)+B(t)v(t), \quad
			t\in[0,T], \\
			& Y(0)=0,
		\end{aligned}
		\right.
	\end{equation}
	then
	\begin{equation}\label{simplify-J_L}
		\begin{aligned}
			& J_L(\Theta(\cdot) Y(\cdot)+v(\cdot)) \\
			= & \frac{1}{2}\inner{ P(0)y_0}{y_0 } 
			+ \frac{1}{2}\int_{0}^{T}\left\{
			2\inner{ (B^\top P+S+R\Theta)Y}{v } 
			+ \inner{ Rv}{v } 
			\right\}dt.
		\end{aligned}
	\end{equation}
\end{lemma}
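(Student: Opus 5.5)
The plan is the standard completion-of-squares computation. Substitute $u=\Theta Y+v$ into $J_L$. The corresponding state $y$ of \eqref{LQ} then satisfies $\dot y = Ay + B(\Theta Y+v) = (A+B\Theta)y + Bv$ with the same initial state, so by uniqueness $y=Y$. (The statement writes $y_0$; for zero initial state the term $\frac12\inner{P(0)y_0}{y_0}$ vanishes, but I would carry a general $Y(0)=y_0$ throughout, since it costs nothing.)

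The first step is to expand the integrand of $2J_L(\Theta Y+v)$. With control $\Theta Y+v$ it reads
\begin{equation*}
\inner{QY}{Y} + 2\inner{S Y}{\Theta Y+v} + \inner{R(\Theta Y+v)}{\Theta Y+v}.
\end{equation*}
This regroups as
\begin{equation*}
\inner{(Q+S^\top\Theta+\Theta^\top S+\Theta^\top R\Theta)Y}{Y} + 2\inner{(S+R\Theta)Y}{v} + \inner{Rv}{v},
\end{equation*}
using the symmetry of $R$.

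The second step is to eliminate the quadratic-in-$Y$ term via the Lyapunov equation \eqref{Lyapunov}. Since $P$ is absolutely continuous and $Y$ solves \eqref{close-loop-system}, differentiating $\inner{PY}{Y}$ gives, for a.e. $t$,
\begin{equation*}
\frac{\dd}{\dd t}\inner{PY}{Y} = \inner{(\dot P + P(A+B\Theta)+(A+B\Theta)^\top P)Y}{Y} + 2\inner{B^\top P Y}{v}.
\end{equation*}
By \eqref{Lyapunov} this equals $-\inner{(\Theta^\top R\Theta+S^\top\Theta+\Theta^\top S+Q)Y}{Y} + 2\inner{B^\top PY}{v}$. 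Integrating over $[0,T]$ and using $P(T)=G$ yields
\begin{equation*}
\inner{GY(T)}{Y(T)} - \inner{P(0)y_0}{y_0} = \int_0^T\bigl[-\inner{(Q+S^\top\Theta+\Theta^\top S+\Theta^\top R\Theta)Y}{Y} + 2\inner{B^\top PY}{v}\bigr]\dd t.
\end{equation*}

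The final step is to add this identity to the expansion from the first step, together with the terminal term $\inner{GY(T)}{Y(T)}$. The quadratic-in-$Y$ terms cancel and the terminal terms cancel, leaving $\inner{P(0)y_0}{y_0}+\int_0^T\{2\inner{(B^\top P+S+R\Theta)Y}{v}+\inner{Rv}{v}\}\dd t$. Dividing by two gives \eqref{simplify-J_L}.

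The only delicate point is justifying the product rule and the fundamental theorem of calculus for $\inner{PY}{Y}$ with merely $L^\infty$ coefficients and $v\in\LT$. This is fine because $P$ and $Y$ are absolutely continuous (the Lyapunov equation is linear with bounded coefficients, so it has a unique absolutely continuous solution), and all integrands are integrable since $Y$ is bounded and $v\in\LT$. Everything else is bookkeeping, so I expect no real obstacle.
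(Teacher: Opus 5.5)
Your proposal is correct and follows essentially the same route as the paper: identify the state driven by $\Theta Y+v$ with $Y$ (the paper does this via $Z=z-Y$, which solves $\dot Z=AZ$, $Z(0)=0$), expand the integrand, and apply the Lyapunov equation and the fundamental theorem of calculus to $\inner{PY}{Y}$, which cancels the quadratic-in-$Y$ and terminal terms. Two small fixes: your chain $\dot y=Ay+B(\Theta Y+v)=(A+B\Theta)y+Bv$ already assumes $y=Y$, so say instead that $Y$ itself satisfies $\dot Y=AY+B(\Theta Y+v)$ and uniqueness of the state gives $y=Y$; and the cross term $2\inner{B^\top PY}{v}$ requires $P$ to be symmetric, which holds because $P^\top$ solves the same Lyapunov equation (as $Q,R,G$ are symmetric) and that solution is unique.
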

\begin{proof}
	Let $z(\cdot)$ be the state corresponding to $\Theta(\cdot)Y(\cdot)+v(\cdot)$ in \eqref{LQ}.
	Setting $Z(\cdot)=z(\cdot)-Y(\cdot)$, we obtain
	\begin{equation*}
		\dot{Z}(t)=A(t)Z(t), \quad t\in(0,T), \quad Z(0)=0,
	\end{equation*}
	Hence
	\begin{equation*}
		z(t)-Y(t)=Z(t)=0, \quad \text{a.e. }t\in[0,T].
	\end{equation*}
	It follows that
	\begin{align*}
		& J_L(\Theta(\cdot)Y(\cdot)+v(\cdot)) \\
		=	& \frac{1}{2}\int_{0}^{T}\left\{
		\inner*{ 
			\left(\begin{matrix}
				Q(t) & S(t)^\top \\
				S(t) & R(t)
			\end{matrix}\right)
			\left(\begin{matrix}
				Y(t) \\
				\Theta(t)Y(t)+v(t)
			\end{matrix}\right)}{
			\left(\begin{matrix}
				Y(t) \\
				\Theta(t)Y(t)+v(t)
			\end{matrix}\right)
		} 
		\right\}dt \\
		& + \frac{1}{2}\inner{ GY(T)}{Y(T) }.
	\end{align*}
	Since $P(T)=G$ and $Y(0)=y_0=0$, the fundamental theorem of calculus yields
	\begin{align*}
		& J_L(\Theta(\cdot) Y(\cdot) + v(\cdot)) \\
		=	& \frac{1}{2} \inner{P(T)Y(T)}{Y(T)}
		+ \frac{1}{2}\int_{0}^{T}
		\inner*{ 
			\left(\begin{matrix}
				Q & S^\top \\
				S & R \\
			\end{matrix}\right)
			\left(\begin{matrix}
				Y \\
				\Theta Y+v \\
			\end{matrix}\right)}{
			\left(\begin{matrix}
				Y \\
				\Theta Y+v \\
			\end{matrix}\right)			
		} dt \\
		=	& \frac{1}{2}\int_{0}^{T}\biggl\{
		\inner*{ 
			\left[
			\dot{P}+P(A+B\Theta)+(A+B\Theta)^\top P
			+ \Theta^\top R\Theta+S^\top\Theta+\Theta^\top S+Q
			\right]Y}{Y} \\
		&	\quad
		+ \inner{Rv}{v} 
		+ 2 \inner{(B^\top P+S+R\Theta)Y}{v}
		\biggr\}dt
		+ \frac{1}{2} \inner{P(0)y_0}{y_0}  \\
		=	& \frac{1}{2} \inner{P(0)y_0}{y_0}
		+ \frac{1}{2} \int_{0}^{T}
		\{
		2 \inner{(B^\top P+S+R\Theta)Y}{v} + \inner{Rv}{v}
		\} \dd t,
	\end{align*}
	which proves \eqref{simplify-J_L}.
\end{proof}
\begin{lemma}\label{L-LQE}
	For every $\Theta(\cdot)\in L^\infty(0,T;\RR^{m\times n})$, there exists a constant $\gamma>0$ such that
	\begin{equation}\label{estimate-LQ}
		\norm{v-\Theta y}_2^2 \ge \gamma \norm{v}_2^2, \quad \forall v(\cdot)\in  \LT
	\end{equation}
	where $y(\cdot)$ solves
	\begin{equation*}
		\left\{
		\begin{aligned}
			& \dot{y}(t) = A(t)y(t)+B(t)v(t), \quad t\in[0,T], \\
			& y(0) = 0.
		\end{aligned}
		\right.
	\end{equation*}
\end{lemma}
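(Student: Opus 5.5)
The idea is to observe that the map $v\mapsto v-\Theta y$ is a bounded linear bijection of $\LT$ whose inverse is also bounded; \eqref{estimate-LQ} then holds with $\gamma=\norm{\mathcal{T}^{-1}}^{-2}$. Write $\mathcal{L}:\LT\to C([0,T];\RR^n)$ for the solution operator $v\mapsto y$. It is linear and bounded: Gronwall's inequality, as in the proof of \cref{LEypsi}, gives $\abs{y(t)}\le \norm{B}_\infty e^{\norm{A}_\infty T}\norm{v}_1\le c\norm{v}_2$. Hence $\mathcal{T}v\triangleq v-\Theta\mathcal{L}v$ is a bounded linear operator on $\LT$.

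To invert $\mathcal{T}$, fix $w\in\LT$ and look for $v$ with $v-\Theta y=w$. Substituting $v=w+\Theta y$ into the state equation gives the closed-loop system
\begin{equation*}
	\dot{y}=(A+B\Theta)y+Bw,\qquad y(0)=0.
\end{equation*}
This system has coefficients in $L^\infty$, so it has a unique solution satisfying $\abs{y(t)}\le \norm{B}_\infty e^{\norm{A+B\Theta}_\infty T}\sqrt{T}\norm{w}_2$. Define $v=w+\Theta y$. Then $y$ is exactly the state generated by $v$ in the original system, and therefore $\mathcal{T}v=w$.

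Uniqueness comes from the same argument run backwards. If $\mathcal{T}v=0$, then $v=\Theta y$, so $y$ solves $\dot y=(A+B\Theta)y$ with $y(0)=0$. This forces $y\equiv0$, and hence $v=0$. This is the same closed-loop identity used in \cref{L-simplify-J_L}.

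From the explicit construction,
\begin{equation*}
	\norm{v}_2\le \norm{w}_2+\norm{\Theta}_\infty\sqrt{T}\norm{y}_\infty\le \bigl(1+\norm{\Theta}_\infty\norm{B}_\infty T e^{\norm{A+B\Theta}_\infty T}\bigr)\norm{w}_2\triangleq M\norm{w}_2.
\end{equation*}
Setting $w=v-\Theta y$, this reads $\norm{v}_2\le M\norm{v-\Theta y}_2$ for every $v$. So \eqref{estimate-LQ} holds with $\gamma=M^{-2}$.

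There is no real obstacle here. The only point needing care is checking that the state generated by $v=w+\Theta y$ coincides with the closed-loop solution $y$, and that identification is immediate from uniqueness of solutions to linear ODEs.
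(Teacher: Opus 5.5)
Your proposal is correct and follows the paper's proof essentially step for step. The paper also uses the operator $\mathcal{T}v=v-\Theta y$ and identifies its inverse through the closed-loop system $\dot{\tilde y}=(A+B\Theta)\tilde y+Bw$. It then bounds that inverse via Gronwall by $\norm{\mathcal{T}^{-1}}_{\mathbb{L}(\LT)}\le 1+T\norm{B}_\infty e^{\norm{A+B\Theta}_\infty T}\norm{\Theta}_\infty$, which is the same constant you obtain, and takes $\gamma=\norm{\mathcal{T}^{-1}}_{\mathbb{L}(\LT)}^{-2}$.
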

\begin{proof}
	Define the operator $\mathcal{T}: \LT\to  \LT$ by
	\begin{equation*}
		\mathcal{T}v=v-\Theta y.
	\end{equation*}
	Then $\mathcal{T}$ is an invertible linear operator, with inverse $\mathcal{T}^{-1}$ given by
	\begin{equation}\label{LQE.1}
		\mathcal{T}^{-1}v=v+\Theta \tilde{y},
	\end{equation}
	where $\tilde{y}$ solves
	\begin{equation*}
		\left\{
		\begin{aligned}
			& \dot{\tilde{y}}(t)
			= (A(t)+B(t)\Theta(t))\tilde{y}(t)
			+ B(t)v(t), \quad t\in[0,T], \\
			& \tilde{y}(0)=0.
		\end{aligned}
		\right.
	\end{equation*}
	Indeed,
	\begin{equation}\label{LQE.2}
		(\mathcal{T}^{-1}\mathcal{T})(v)
		= \mathcal{T}^{-1}(v-\Theta y)
		= v-\Theta y + \Theta \tilde{y}.
	\end{equation}
	By the definitions of $y$ and $\tilde{y}$,
	\begin{equation}\label{LQE.3}
		\left\{
		\begin{aligned}
			& \dot{\tilde{y}}(t)-\dot{y}(t) = (A+B\Theta)(\tilde{y}-y), \\
			& \tilde{y}(0)-y(0)=0.
		\end{aligned}
		\right.
	\end{equation}
	Combining \eqref{LQE.2} and \eqref{LQE.3} therefore gives
	\begin{equation}\label{LQE.4}
		(\mathcal{T}^{-1}\mathcal{T})(v) = v.
	\end{equation}
	Similarly,
	\begin{equation}\label{LQE.5}
		(\mathcal{T} \mathcal{T}^{-1})(v) = v.
	\end{equation}
	Equations \eqref{LQE.4} and \eqref{LQE.5} verify the formula for $\mathcal{T}^{-1}$. In what follows, $\tilde y$ again denotes the solution corresponding to the input $v$. Since $A(\cdot),B(\cdot),\Theta(\cdot)$ are essentially bounded, setting $M=\norm{A+B\Theta}_\infty$ and $N=\norm B_\infty$ gives
	\begin{equation*}
		\abs*{\dot{\tilde{y}}(t)} \le M\abs{\tilde{y}(t)}+N\abs{v(t)}, \quad
		\text{a.e. }t\in[0,T],
	\end{equation*}
	and hence
	\begin{equation}\label{LQE.6}
		\abs{\tilde{y}(t)}
		\le \int_{0}^{t}(M\abs{\tilde{y}(s)}+N\abs{v(s)})ds
		\le \int_{0}^{t}M\abs{\tilde{y}(s)}ds+N\norm{v}_1, 
		\quad \forall t\in[0,T].
	\end{equation}
	Applying Gronwall's inequality to \eqref{LQE.6}, we obtain
	\begin{equation*}
		\abs{\tilde{y}(t)}
		\le Ne^{Mt}\norm{v}_1
		\le \sqrt{T}Ne^{MT}\norm{v}_2,
	\end{equation*}
	Together with H{\"o}lder's inequality, this gives
	\begin{equation}\label{LQE.7}
		\norm{\Theta\tilde{y}}_2
		\le \sqrt{T}\norm{\Theta}_\infty \norm{\tilde{y}}_\infty
		\le TNe^{MT}\norm{\Theta}_\infty\norm{v}_2.
	\end{equation}
	Combining \eqref{LQE.1} and \eqref{LQE.7}, we obtain
	\begin{equation*}
		\norm{\mathcal{T}^{-1}v}_2
		\le \norm{v}_2+\norm{\Theta \tilde{y}}_2
		\le (TNe^{MT}\norm{\Theta}_\infty+1)\norm{v}_2,
	\end{equation*}
	Thus, $\mathcal{T}^{-1}:\LT \to \LT$ is a bounded linear operator, and consequently
	\begin{equation*}
		\norm{ v }_2^2
		= \norm{ \mathcal{T}^{-1}\mathcal{T}v }_2^2
		\le \norm{ \mathcal{T}^{-1}}_{\mathbb{L}( \LT)}^2
		\norm{ \mathcal{T}v }_2^2
	\end{equation*}
	Taking $\gamma=\norm{\mathcal{T}^{-1}}_{\mathbb{L}( \LT)}^{-2}$ yields \eqref{estimate-LQ}.
\end{proof}

\begin{proposition}\label{P-convex-positive}
	If $\lambda > 0$ is a uniform convexity constant of $J_L$, then \eqref{UPD} holds with the constant $2\lambda$.
\end{proposition}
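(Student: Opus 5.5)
The plan is to test the uniform convexity inequality \eqref{uniformly-convex-1} with controls concentrated on shrinking intervals (spike variations). On such intervals the state $y$ is of higher order than $v$, so every term of $J_L$ except $\frac12\int\inner{Rv}{v}\dd t$ becomes negligible. Because of the factor $\frac12$ in $J_L$, the bound $J_L\ge\lambda\norm{v}_2^2$ then turns into $\frac12\inner{R(t_0)\xi}{\xi}\ge\lambda\abs{\xi}^2$, which is $R\ge 2\lambda I$.

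\emph{Choice of the exceptional set.} Let $t_0\in[0,T)$ be a Lebesgue point of every entry of $R(\cdot)\in L^\infty(0,T;\SS^m)$; almost every $t_0$ qualifies. This set does not depend on any direction, so a single null set serves all $\xi\in\RR^m$ and no density argument over directions is needed. Fix $\xi\in\RR^m$. For small $\eps>0$ set $v_\eps=\xi\,\mathbf 1_{[t_0,t_0+\eps]}$, so that $\norm{v_\eps}_2^2=\eps\abs{\xi}^2$.

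\emph{Estimating the lower-order terms.} Let $M=\norm{A}_\infty$ and $N=\norm{B}_\infty$. The Gronwall argument already used in \cref{L-LQE} gives
\[
\abs{y_\eps(t)}\le Ne^{MT}\norm{v_\eps}_1=Ne^{MT}\eps\abs{\xi}\quad\text{for all }t\in[0,T].
\]
Consequently:
\begin{itemize}
\item $\int_0^T\inner{Qy_\eps}{y_\eps}\dd t=O(\eps^2)\abs{\xi}^2$;
\item $\inner{Gy_\eps(T)}{y_\eps(T)}=O(\eps^2)\abs{\xi}^2$;
\item the cross term $2\int_0^T\inner{Sy_\eps}{v_\eps}\dd t$ is bounded by $2\norm{S}_\infty\,Ne^{MT}\eps\abs{\xi}\cdot\eps\abs{\xi}=O(\eps^2)\abs{\xi}^2$.
\end{itemize}
Inserting these into \eqref{uniformly-convex-1} gives
\[
\frac12\int_{t_0}^{t_0+\eps}\inner{R(t)\xi}{\xi}\dd t+O(\eps^2)\abs{\xi}^2\ \ge\ \lambda\eps\abs{\xi}^2 .
\]

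\emph{Passing to the limit.} Divide by $\eps$ and let $\eps\to0$. Since $t_0$ is a Lebesgue point of the entries of $R$, the averaged integral converges to $\inner{R(t_0)\xi}{\xi}$. This yields $\inner{R(t_0)\xi}{\xi}\ge2\lambda\abs{\xi}^2$ for every $\xi$, i.e.\ \eqref{UPD} holds with constant $2\lambda$. The only delicate point is the one handled in the first step: choosing the exceptional null set independently of $\xi$, which taking Lebesgue points of the matrix entries accomplishes. The remaining estimates are routine, since all coefficients are essentially bounded.
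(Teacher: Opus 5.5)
Your proof is correct, and it uses the paper's core device: a spike control on $[t_0,t_0+\eps]$, division by the interval length, and passage to the limit at a Lebesgue point of $R$. The difference is in how you get rid of the terms other than $\inner{Rv}{v}$.

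The paper fixes an arbitrary feedback $\Theta\in L^\infty(0,T;\RR^{m\times n})$ and solves the Lyapunov equation \eqref{Lyapunov}. It then uses \cref{L-simplify-J_L} to rewrite $J_L(\Theta Y+v)$ so that the $Q$- and $G$-terms vanish exactly. Applying uniform convexity to $\Theta Y+v$ gives \eqref{proof-R-1}, where only the cross term $2\inner{(B^\top P+S+(R-2\lambda I)\Theta)Y}{v}$ remains. That term is shown to be $O(h^2)$ via the fundamental-matrix formula for $Y_0$.

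You skip all of this and bound the $Q$-, $G$- and $S$-terms of $J_L(v_\eps)$ directly with the Gronwall estimate $\sup_t\abs{y_\eps(t)}\le Ne^{MT}\eps\abs{\xi}$. The paper's conclusion does not depend on $\Theta$, so its argument with $\Theta=0$ is essentially yours, with the Lyapunov rewriting replacing a direct estimate. The $\Theta$-parametrized machinery is carried over from the stochastic LQ argument the paper follows. There, when the control enters the diffusion, a spike control moves the state by $O(\sqrt{h})$ in mean square, so the $Q$- and $G$-terms are the same order as the $R$-term and cannot be dropped. In the deterministic problem \eqref{LQ}, your route is shorter and needs neither \cref{L-simplify-J_L} nor the closed-loop system \eqref{close-loop-system}.

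Your handling of the exceptional set is also slightly cleaner. You take common Lebesgue points of the finitely many entries of $R$, so one null set works for every $\xi$. The paper instead restricts to $v_0\in\QQ^m$ and extends by continuity; both are valid.
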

\begin{proof}
	For any $\Theta\in L^\infty(0,T;\RR^{m\times n})$, let $P(\cdot)$ solve the Lyapunov equation \eqref{Lyapunov}. For any $v\in  \LT$, let $Y(\cdot)$ solve the closed-loop system \eqref{close-loop-system}. By the uniform convexity of $J_L$ and Lemma \ref{L-simplify-J_L},
	\begin{align*}
		\lambda \norm{\Theta Y+v}_2^2
		\le & J_L(\Theta(\cdot)Y(\cdot)+v(\cdot)) \\
		=	& \frac{1}{2}\int_{0}^{T}\left\{2\inner{  (B^\top P+S+R\Theta)Y}{v} +\inner{  Rv}{v} \right\}dt.
	\end{align*}
	Thus, for every $v(\cdot)\in  \LT$,
	\begin{equation}\label{proof-R-1}
		\int_{0}^{T}\left\{
		2\inner{  (B^\top P+S+(R-2\lambda I)\Theta)Y}{v} 
		+ \inner{  (R-2\lambda I)v}{v} 
		\right\}dt
		\ge 2 \lambda \norm{\Theta Y}_2^2
		\ge 0.
	\end{equation}
	Fix $v_0\in\RR^m$ and set
	\begin{equation*}
		v(t)=v_0I_{[s,s+h]}(t), \quad 0\le s<s+h\le T,
	\end{equation*}
	where $I_{[s,s+h]}$ denotes the indicator function of $[s,s+h]$. Then
	\begin{equation*}
		\left\{
		\begin{aligned}
			& \dot Y_0(t)
			= (A(t)+B(t)\Theta(t))Y_0(t)
			+ B(t)v_0I_{[s,s+h]}(t), \quad t\in[0,T], \\
			& Y_0(0)=0.
		\end{aligned}
		\right.
	\end{equation*}
	Hence
	\begin{equation*}
		Y_0(t)=
		\begin{cases}
			0, & t\in[0,s], \\
			\Phi(t)\int_{s}^{t\wedge (s+h)}\Phi(r)^{-1}B(r)v_0dr, & t\in[s,T],
		\end{cases}
	\end{equation*}
	where $\Phi(\cdot)$ is the fundamental matrix satisfying
	\begin{equation*}
		\left\{
		\begin{aligned}
			& \dot{\Phi}(t)=(A(t)+B(t)\Theta(t))\Phi(t), \quad
			t\in[0,T], \\
			& \Phi(0)=I.
		\end{aligned}
		\right.
	\end{equation*}
	Thus, \eqref{proof-R-1} becomes
	\begin{align*}
		0
		\le &
		~ \frac{1}{2}\int_{s}^{s+h}\biggl\{
		2\inner*{  
		\left[B^\top P+S+(R-2\lambda I)\Theta\right]\Phi(t)
		\int_{s}^{t}\Phi(r)^{-1}B(r)v_0dr}{ v_0
		} \\
		& + \inner{  (R-2\lambda I)v_0}{v_0} 
		\biggr\}dt.
	\end{align*}
	The integral of the first term in this inequality is $O(h^2)$. Dividing both sides by $h$ and letting $h\to 0^+$ at a Lebesgue point of $R$, we obtain
	\begin{equation*}
		\inner{ (R(t)-2\lambda I)v_0}{v_0} \ge 0, \quad
		\text{a.e. }t\in[0,T], ~ \forall v_0\in\RR^m.
	\end{equation*}
	First take a common set of full measure for all $v_0\in\QQ^m$, and then extend the inequality to all $v_0\in\RR^m$ by continuity in $v_0$. Thus, \eqref{UPD} holds with the constant $2\lambda>0$.
\end{proof}

\begin{proposition}\label{P-equivalence}
	The following statements are equivalent:
	\begin{enumerate}
		\item $J_L$ is uniformly convex;
		\item The Riccati equation \eqref{Riccati} admits a strongly regular solution.
	\end{enumerate}
\end{proposition}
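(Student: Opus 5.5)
The plan is to prove the two implications separately, in the spirit of \cite{SunLiYong2016}, using \cref{L-simplify-J_L} to turn the cost into a completed square and \cref{L-LQE} to compare the two parametrizations of the control.

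\emph{(ii) $\Rightarrow$ (i).} Let $P(\cdot)$ be a strongly regular solution, so that \eqref{UPD} holds with some $\lambda>0$. Put
\begin{equation*}
	\Theta = -R^{-1}(B^\top P+S) \in L^\infty(0,T;\RR^{m\times n}).
\end{equation*}
A direct substitution shows that
\begin{equation*}
	P B\Theta+\Theta^\top B^\top P+\Theta^\top R\Theta+S^\top\Theta+\Theta^\top S = -(B^\top P+S)^\top R^{-1}(B^\top P+S).
\end{equation*}
Hence the Riccati equation \eqref{Riccati} is exactly the Lyapunov equation \eqref{Lyapunov} for this choice of $\Theta$. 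Moreover $B^\top P+S+R\Theta=0$ and $y_0=0$, so \cref{L-simplify-J_L} gives
\begin{equation*}
	J_L(\Theta Y+v)=\tfrac12\int_0^T\inner{Rv}{v}\dd t\ge \tfrac{\lambda}{2}\norm{v}_2^2 .
\end{equation*}
Now take an arbitrary $w\in\LT$ with state $y$, and set $v=w-\Theta y$. Then the closed-loop state $Y$ driven by $v$ coincides with $y$ (same ODE, zero initial value), so $w=\Theta Y+v$. \cref{L-LQE} supplies $\gamma>0$ with $\norm{w-\Theta y}_2^2\ge\gamma\norm{w}_2^2$. Therefore $J_L(w)\ge \tfrac{\lambda\gamma}{2}\norm{w}_2^2$, which is uniform convexity.

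\emph{(i) $\Rightarrow$ (ii).} By \cref{P-convex-positive}, $R\ge 2\lambda I$, so $R^{-1}$ is essentially bounded. The right-hand side of \eqref{Riccati} is then locally Lipschitz in $P$, uniformly in $t$, and a solution exists on a maximal interval $(\tau,T]$. It suffices to show that $P$ stays bounded there, independently of $\tau$, which rules out blow-up.

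For $t\in(\tau,T]$ and $\xi\in\RR^n$, consider the LQ problem \eqref{LQ} on $[t,T]$ with initial state $y(t)=\xi$. Let $J_L^t(\xi;v)$ denote its cost. Applying \cref{L-simplify-J_L} on $[t,T]$ (its proof works verbatim with a nonzero initial state) with the feedback $\Theta$ above yields
\begin{equation*}
	J_L^t(\xi;\Theta Y+v)=\tfrac12\inner{P(t)\xi}{\xi}+\tfrac12\int_t^T\inner{Rv}{v}\dd s .
\end{equation*}
Hence $\tfrac12\inner{P(t)\xi}{\xi}=\inf_v J_L^t(\xi;v)$, the infimum being attained at $v=0$.

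\emph{Upper bound.} Taking the open-loop control $0$ gives $\inner{P(t)\xi}{\xi}\le 2J_L^t(\xi;0)\le K\abs{\xi}^2$, where a Gronwall estimate on the free dynamics shows that $K$ depends only on $T$ and the $L^\infty$ bounds of the coefficients.

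\emph{Lower bound.} Uniform convexity passes to $[t,T]$ for the zero initial state: extend a control on $[t,T]$ by zero on $[0,t)$; the state stays zero there, so $J_L^t(0;v)\ge\lambda\norm{v}_{L^2(t,T)}^2$. By linearity of the state in $(\xi,v)$ we can split
\begin{equation*}
	J_L^t(\xi;v)=J_L^t(0;v)+\ell_\xi(v)+J_L^t(\xi;0),
\end{equation*}
where $\ell_\xi$ is a cross term satisfying $\abs{\ell_\xi(v)}\le K\abs{\xi}\norm{v}_2$. Minimizing in $v$ gives $J_L^t(\xi;v)\ge -K'\abs{\xi}^2$ with $K'$ independent of $t$ and $\tau$.

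Thus $\abs{P(t)}$ is bounded on $(\tau,T]$ by a constant independent of $\tau$. The solution therefore extends to all of $[0,T]$, and together with $R\ge2\lambda I$ this makes it strongly regular.

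The step I expect to be the main obstacle is the identification $\tfrac12\inner{P(t)\xi}{\xi}=\inf_v J_L^t(\xi;v)$ on subintervals with a nonzero initial state. It requires redoing the completion of squares of \cref{L-simplify-J_L} on $[t,T]$ and checking carefully that all constants are uniform in $t$ before letting $t\downarrow\tau$.
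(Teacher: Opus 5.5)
Your proof is correct, and in the direction (ii)$\Rightarrow$(i) it coincides with the paper's. There the square is completed directly with $\Theta=-R^{-1}(B^\top P+S)$, giving $J_L(v)=\frac12\int_0^T\inner{R(v-\Theta y)}{v-\Theta y}\dd t$, and \cref{L-LQE} finishes the argument; your passage through \cref{L-simplify-J_L} and the reparametrization $w=\Theta Y+v$ is the same computation.

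The real difference is in (i)$\Rightarrow$(ii). The paper proves no estimate on $P$ there. After \cref{P-convex-positive}, it observes that uniform convexity and $J_L(0)=0$ make $v=0$ the unique optimal control for the zero initial state, and then invokes \cref{L-feedback-solution}, whose proof is only referred to \cite{YongZhou1999}. That route needs only strict positivity of $J_L$ together with \eqref{UPD}. However, all of the analytic content is hidden in the cited lemma, namely why uniqueness at the zero initial state prevents the backward Riccati solution from blowing up in $[0,T]$.

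Your continuation argument instead uses the coercivity constant quantitatively. It combines three ingredients:
\begin{enumerate}
\item the identity $\frac12\inner{P(t)\xi}{\xi}=\inf_v J_L^t(\xi;v)$ on subintervals;
\item the free-dynamics upper bound;
\item the transfer of uniform convexity to $[t,T]$.
\end{enumerate}
Together these yield $-2K'\abs{\xi}^2\le\inner{P(t)\xi}{\xi}\le 2K\abs{\xi}^2$, with constants depending only on $T$, $\lambda$ and the $L^\infty$ bounds of the coefficients. Your argument is therefore self-contained. Under the stronger hypothesis, it establishes directly the part of \cref{L-feedback-solution} that is actually used, and it gives an explicit a priori bound on $P$.

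One detail you should state explicitly: the Riccati vector field maps $\SS^n$ into itself, so by uniqueness $P(t)\in\SS^n$. Symmetry is used in two places: your substitution identity for $PB\Theta+\Theta^\top B^\top P+\Theta^\top R\Theta+S^\top\Theta+\Theta^\top S$, and the conversion of the two-sided quadratic-form bound into a bound on $\abs{P(t)}$.
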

\begin{proof}
	(i) $\Rightarrow$ (ii) Suppose that $J_L$ is uniformly convex for the zero initial state. By Proposition \ref{P-convex-positive}, there exists a constant $\lambda>0$ such that \eqref{UPD} holds. Moreover, since $J_L$ is uniformly convex and $J_L(0)=0$, the zero control is the unique optimal control of problem \eqref{LQ}. Lemma \ref{L-feedback-solution} therefore implies that the Riccati equation \eqref{Riccati} admits a strongly regular solution.
	
	(ii) $\Rightarrow$ (i)
	Suppose that \eqref{UPD} holds and that $P(\cdot)$ solves the Riccati equation \eqref{Riccati}. Set $\Theta=-R^{-1}(B^\top P+S)\in L^\infty(0,T;\RR^{m\times n})$. For any $v(\cdot)\in  \LT$, let $y(\cdot)$ solve
	\begin{equation*}
		\dot{y}(t)=A(t)y(t)+B(t)v(t), ~t\in[0,T], \quad
		y(0)=0.
	\end{equation*}
	Since $P(T)=G$ and $y(0) = 0$, the fundamental theorem of calculus gives
	\begin{align*}
		J_L(v(\cdot))
		=	& \frac{1}{2} \inner{P(T)y(T)}{y(T)}
		+ \frac{1}{2}\int_{0}^{T}\left\{
		\inner{Qy}{y}
		+ 2 \inner{Sy}{v}
		+ \inner{Rv}{v} \right\} \dd t \\
		=	& \frac{1}{2}\int_{0}^{T}\biggl\{
		\inner{(\dot{P}+PA+A^\top P+Q)y}{y}
		+ 2 \inner{(B^\top P+S)y}{v} \\
		&		+ \inner{Rv}{v}
		\biggr\}dt
		+ \frac{1}{2} \inner{P(0)y(0)}{y(0)} \\
		=	& \frac{1}{2}\int_{0}^{T}\left\{
		\inner{\Theta^\top R\Theta y}{y}
		- 2\inner{R\Theta y}{v}  + \inner{Rv}{v}
		\right\}dt \\
		=	& \frac{1}{2}\int_{0}^{T}
		\inner{R(v-\Theta y)}{v-\Theta y}
		\dd t \\
		\ge & \frac{\lambda}{2} \norm{v-\Theta y}_2^2
		\ge	  \frac{\lambda\gamma}{2} \norm{v}_2^2.
	\end{align*}
	This proves that $J_L$ is uniformly convex.
\end{proof}

Let $(\bar{x}(\cdot),\bar{u(\cdot)},\bar p(\cdot))$ be an admissible triple, and set
\begin{align}
	\notag
	& A(\cdot)
	= b_x(\cdot,\bar{x}(\cdot),\bar{u(\cdot)}),~
	B(\cdot)
	= b_u(\cdot,\bar{x}(\cdot),\bar{u(\cdot)}), \\
	\label{symbol-bar}
	& Q(\cdot)
	= H_{xx}(
	\cdot,\bar{x}(\cdot),\bar{u(\cdot)},\bar{p}(\cdot)
	),~
	S(\cdot)
	= H_{ux}(
	\cdot,\bar{x}(\cdot),\bar{u(\cdot)},\bar{p}(\cdot)
	), \\
	\notag
	& R(\cdot)
	= H_{uu}(
	\cdot,\bar{x}(\cdot),\bar{u(\cdot)},\bar{p}(\cdot)
	),~
	G
	= g_{xx}(\bar{x}(T)),
\end{align}
Proposition \ref{P-equivalence} then yields an equivalent characterization of the second-order coercivity condition \eqref{SSC}.

\begin{theorem}\label{TSSCE}
	Suppose that \Assume hold, and let $\bar{u(\cdot)}$ be an optimal control of problem \eqref{DOCP}. Then the second-order coercivity condition \eqref{SSC} holds for some positive coercivity constant if and only if the Riccati equation \eqref{Riccati}, with coefficients defined by \eqref{symbol-bar}, admits a strongly regular solution. Furthermore, if \eqref{SSC} holds with a constant $\alpha > 0$, then $R \ge \alpha I$.
\end{theorem}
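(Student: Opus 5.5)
The plan is to reduce \cref{TSSCE} to \cref{P-equivalence} by identifying the left-hand side of \eqref{SSC} with $2J_L$. With the coefficients \eqref{symbol-bar}, the constraint in \eqref{LQ} is exactly the state variation equation \eqref{variable} along $(\bar x(\cdot),\bar u(\cdot))$. Hence, for each $v(\cdot)\in\LT$, the state $y(\cdot)$ of \eqref{LQ} coincides with the state variation in \eqref{SSC}.

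The first step is to expand the quadratic form. Since $H_{ux}=H_{xu}^\top$, the Hessian $H_{(x,u)^2}$ evaluated at $(\bar x,\bar u,\bar p)$ equals
\begin{equation*}
\left(\begin{matrix} Q & S^\top \\ S & R\end{matrix}\right).
\end{equation*}
It follows that the left-hand side of \eqref{SSC} equals $2J_L(v(\cdot))$; equivalently, by \cref{TSOD}(ii), it equals $\inner{J''(\bar u(\cdot))v}{v}_{\LT}$. Before using this identity, I will check that the coefficients are admissible for the setting of \cref{P-equivalence}. The matrices $A$ and $B$ are bounded by $L$ by \cref{A1}. The matrices $Q$, $S$, and $R$ are essentially bounded by \eqref{EH}, since $\bar u\in\LT\subset L^1$ gives $|H_i[t]|\le C_1(1+\norm{\bar u}_1)$. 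Symmetry of $Q$, $R$, and $G$ follows from equality of mixed partials.

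Once this identity is in place, \eqref{SSC} with a constant $\alpha>0$ is the same statement as uniform convexity of $J_L$ with constant $\alpha/2$ in the sense of \cref{D-uniformly-convex}. \cref{P-equivalence} then gives the equivalence with the existence of a strongly regular solution of \eqref{Riccati}. For the quantitative claim, I apply \cref{P-convex-positive} with $\lambda=\alpha/2$, which yields $R\ge 2\lambda I=\alpha I$ almost everywhere. Optimality of $\bar u(\cdot)$ is used only to ensure that $\bar p(\cdot)$ and the coefficients are those of an optimal triple; the equivalence itself does not need stationarity.

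The only delicate point is bookkeeping rather than analysis: matching the factor $\tfrac12$ in $J_L$ against \eqref{SSC}, and confirming that the $L^\infty$ assumptions behind \cref{L-feedback-solution} and \cref{P-equivalence} hold for these data. I expect both checks to follow directly from \cref{LExpH}.
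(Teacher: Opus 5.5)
Your proposal is correct and follows essentially the same route as the paper. You identify the left-hand side of \eqref{SSC} with $2J_L(v(\cdot))$ under the coefficients \eqref{symbol-bar}, so that \eqref{SSC} with constant $\alpha$ is exactly uniform convexity of $J_L$ with constant $\alpha/2$; you then invoke \cref{P-equivalence} for the equivalence and \cref{P-convex-positive} with $\lambda=\alpha/2$ for $R\ge\alpha I$. Your explicit checks of the boundedness and symmetry of the coefficients (via \cref{A1} and \eqref{EH}) spell out what the paper compresses into ``by \cref{LExpH}''.
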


\begin{proof}
	By \cref{LExpH}, the coefficients $A(\cdot),B(\cdot),Q(\cdot),S(\cdot),R(\cdot),G$ defined in \eqref{symbol-bar} satisfy the requirements of problem \eqref{LQ}. Substituting \eqref{symbol-bar} into $J_L$ gives
	\begin{align*}
		J_L(v(\cdot))
		=	& \frac{1}{2}\int_{0}^{T}
		H_{(x,u)^2}(t,\bar{x}(t),\bar{u}(t),\bar{p}(t))
		(y(t),v(t))^2 dt
		+ \frac{1}{2}y(T)^\top g_{xx}(\bar{x}(T))y(T).
	\end{align*}
	Thus, \eqref{SSC} holds with $\alpha$ if and only if $\frac{\alpha}{2}$ is a uniform convexity constant of $J_L$. By \cref{P-convex-positive,P-equivalence}, the existence of such an $\alpha>0$ is equivalent to the existence of a strongly regular solution of \eqref{Riccati}; moreover, $R \ge \alpha I$ for every $\alpha$ satisfying \eqref{SSC}.
\end{proof}

\begin{remark}
	The second-order coercivity condition \eqref{SSC} is an abstract functional inequality on an infinite-dimensional space. Since it must hold for every function in $\LT$, it does not lend itself to direct verification in numerical implementations. The equivalence in \cref{TSSCE} reduces this verification to the existence of a strongly regular solution of the associated differential Riccati equation \eqref{Riccati}. In engineering applications, \cref{TSSCE} leads to the following two checks:
	\begin{enumerate}
		\item Uniform positive definiteness: this requires checking the positive definiteness of the finite-dimensional matrix $R(\cdot)$ at each time and ensuring a positive lower bound uniform in time.
		\item Numerical solution of an ordinary differential equation: the Riccati equation \eqref{Riccati} can be solved numerically. If the numerical solution exists throughout the interval without encountering a singularity, this provides numerical evidence for the solvability of the Riccati equation.
	\end{enumerate}
	In practice, these two checks are more direct and feasible than testing the original integral inequality over an infinite-dimensional space. For solvability conditions for Riccati equations, see \cite{Kalman1960,Anderson2007,Kuvcera1973}; for related numerical methods, see \cite{Dieci1994,Choi1989,Kleinman1968,Laub1979,Lang2015,Blanes2024}.
\end{remark}


\section{Convergence analysis of Newton's method}

Using the second-derivative estimates in Section~3 and the characterization of second-order coercivity in Section~4, we establish local quadratic convergence of Newton's method and derive explicit estimates for the radii of convergence. Numerical examples are given to illustrate the results.

\subsection{Newton iteration}

\begingroup
\interdisplaylinepenalty=10000
For problem~\eqref{DOCP}, let $u^k(\cdot)$ be the current iterate, and let $x^k(\cdot)$ and $p^k(\cdot)$ be the corresponding state and adjoint variables, respectively. Consider the following subproblem $\mathrm{(DOCP)^k}$:
\begin{align}
	\notag
	& \underset{v(\cdot)\in  \LT}{\mathrm{min}}~
	J^k(v(\cdot))
	= \frac{1}{2}\int_{0}^{T}\biggl\{
	2 \inner{H_u^k[t]}{v(t)}
	+ H_{(x,u)^2}^k[t]
	(y(t),v(t))^2
	\biggr\}\dd t \\
	\label{DOCPk}
	& \hspace{8em}
	+ \frac{1}{2} \inner{g_{xx}(x^k(T))y(T)}{y(T)} \\
	\notag
	& \hspace{1em}\mathrm{s.t.} \hspace{1em}
	\left\{\begin{aligned}
		& \dot{y}(t)
		= b_x(t,x^k(t),u^k(t))y(t)
		+ b_u(t,x^k(t),u^k(t))v(t), 
		~ t \in [0,T], \\
		& y(0) = 0.
	\end{aligned}\right.
\end{align}
\endgroup

\begin{algorithm}[H] 
	\caption{Newton's method}
	\label{Algorithm-DOCP-Newton}
	\begin{algorithmic}[1]
		\item[\bfseries Initialization.]
		Choose an initial control $u^0(\cdot)$.
		\item[\bfseries Step~1.]
		Given the current control $u^k(\cdot)$, compute the corresponding state $x^k(\cdot)$ and adjoint $p^k(\cdot)$ from \eqref{system} and \eqref{adjoint}.
		\item[\bfseries Step~2.]
		Compute the derivative of the cost functional, $J'(u^k(\cdot)) = H_u^k[\cdot]$. If $J'(u^k(\cdot))=0$, stop and return the stationary control $u^k(\cdot)$; otherwise, proceed to Step~3.
		\item[\bfseries Step~3.]
		Solve problem~\eqref{DOCPk} to obtain $v^{k}(\cdot)$. If $v^k(\cdot)=0$, stop and return the stationary control $u^k(\cdot)$; otherwise, update $u^{k+1}(\cdot)=u^k(\cdot)+v^{k}(\cdot)$ and return to Step~1.
	\end{algorithmic}
\end{algorithm}

\begin{remark}\label{RSQP}
	If the SQP method is used instead, subproblem~\eqref{DOCPk} is replaced by
	\begin{equation}\label{SQPk}
		\begin{aligned}
			& \underset{
				\underset{y(\cdot)\in W^{1,2}(0,T;\RR^n)}
				{v(\cdot)\in \LT}
			}{\min} \;
			\frac{1}{2}\int_{0}^{T}\biggl\{
			2[
			f_u^k[t]^\top v(t)
			+ f_x^k[t]^\top y(t)
			]
			+ H_{(x,u)^2}^k[t]
			(y(t),v(t))^2
			\biggr\} \dd t \\
			& \hspace{6 em}
			+ g_x(x^k(T))^\top y(T)
			+ \frac{1}{2}y(T)^\top g_{xx}(x^k(T))y(T) \\
			& \hspace{2em}\mathrm{s.t.} \hspace{2.5em}
			\left\{
			\begin{aligned}
				& \dot{y}(t)
				= b(t,x^k(t),u^k(t))
				+ b_x(t,x^k(t),u^k(t))y(t) \\
				& \hspace{2em}
				+ b_u(t,x^k(t),u^k(t))v(t)
				- \dot{x^k}(t), ~
				~ t\in[0,T], \\
				& y(0) = 0,
			\end{aligned}
			\right.
		\end{aligned}
	\end{equation}
	At each iteration, solve \eqref{SQPk} for $(y^k(\cdot),v^k(\cdot))$ and update
	\begin{equation*}
		(x^{k+1}(\cdot),u^{k+1}(\cdot))
		= (x^k(\cdot),u^k(\cdot))
		+ (y^k(\cdot),v^{k}(\cdot)).
	\end{equation*}
	Let $\{(y_k(\cdot),v_k(\cdot))\}$ denote the sequence of subproblem solutions generated by the SQP method, and let $(x^k(\cdot),u^k(\cdot))$ and $(x_k(\cdot),u_k(\cdot))$ denote the state--control pairs generated by Newton's method and the SQP method, respectively. Then
	\begin{equation*}
		\left\{
		\begin{aligned}
			& \dot{x_{k+1}}(t) = b(t,x_k(t),u_k(t))
			+ b_x(t,x_k(t),u_k(t))y_k(t) \\
			& \qquad\qquad
			+ b_u(t,x_k(t),u_k(t))v_k(t),
			\quad t\in[0,T], \\
			& x^{k+1}(0) = x_0, ~ x_{k+1}(0) = x_0.
		\end{aligned}
		\right.
	\end{equation*}
	When the control system is nonlinear, the state--control pairs generated by the SQP method satisfy the linearized system but generally do not satisfy the original nonlinear system. In contrast, the state--control pairs generated by Newton's method satisfy the original nonlinear system exactly.
\end{remark}

\subsection{Local quadratic convergence of Newton's method}

Consider the linear-quadratic optimal control problem with a linear term in the cost functional
\begin{equation}\label{LQL}
	\begin{aligned}
		& \underset{v(\cdot)\in  \LT}{\mathrm{min}} \;
		\frac{1}{2}\int_{0}^{T}\left\{
		2 \inner{\rho(t)}{v(t)}
		+ \inner*{  
			\left(\begin{matrix}
				Q(t) & S(t)^\top \\
				S(t) & R(t)
			\end{matrix}\right)
			\left(\begin{matrix}
				y(t) \\
				v(t)
			\end{matrix}\right)}{
			\left(\begin{matrix}
				y(t) \\
				v(t)
			\end{matrix}\right)
		} \right\}\dd t
		+ \frac{1}{2}\inner{Gy(T)}{y(T)} \\
		& \hspace{1em}\mathrm{s.t.}\hspace{1em}\left\{
		\begin{aligned}
			& \dot{y}(t)=A(t)y(t)+B(t)v(t), 
			\quad t\in[0,T], \\
			& y(0) = 0,
		\end{aligned}
		\right.
	\end{aligned}
\end{equation}
where $\rho \in \LT$ and the remaining coefficients are as in \eqref{LQ}. The following lemma holds.

\begin{lemma}\label{LFS}
	Suppose that \eqref{UPD} holds. If problem~\eqref{LQL} has a unique optimal pair $(\bar{y}(\cdot),\bar{v}(\cdot))$, then there exists a function $h(\cdot)$ satisfying
	\begin{equation}\label{h}
		\dot{h}(t)
		=
		- A(t)^\top h(t)
		- Q(t) \bar{y}(t)
		- S(t)^\top \bar{v}(t),
		\quad t \in[0,T], \quad h(T) = G\bar{y}(T),
	\end{equation}
	and
	\begin{equation}\label{feedback-solution-yh}
		\bar{v}(t)
		= - R(t)^{-1} [
		B(t)^\top h(t) + S(t) \bar{y}(t) + \rho(t)
		], \quad t \in [0,T].
	\end{equation}
\end{lemma}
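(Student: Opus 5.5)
The approach is to derive the first-order optimality condition for problem~\eqref{LQL} by a variational argument, exactly as in the proof of \cref{PFOD}. Then the adjoint function $h$ is introduced through the backward linear equation \eqref{h}, and the stationarity condition is solved for $\bar v$ using the invertibility guaranteed by \eqref{UPD}.

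\textbf{Step 1: define $h$.} Since $A,Q,S\in L^\infty$, $\bar y\in C([0,T];\RR^n)$ and $\bar v\in\LT$, the linear backward equation \eqref{h} with terminal value $G\bar y(T)$ has a unique absolutely continuous solution $h$ on $[0,T]$. This settles existence, and it remains to verify \eqref{feedback-solution-yh}.

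\textbf{Step 2: compute the directional derivative.} Fix an arbitrary direction $w(\cdot)\in\LT$ and let $y_w$ be the solution of $\dot y_w=Ay_w+Bw$, $y_w(0)=0$. By linearity, the state corresponding to $\bar v+\eps w$ is $\bar y+\eps y_w$. Denote the cost in \eqref{LQL} by $\Phi$. Since $\Phi(\bar v+\eps w)$ is a quadratic polynomial in $\eps$ and $\bar v$ is optimal, the coefficient of $\eps$ must vanish:
\begin{equation*}
	\int_0^T\bigl\{\inner{\rho}{w}+\inner{Q\bar y+S^\top\bar v}{y_w}+\inner{S\bar y+R\bar v}{w}\bigr\}\dd t+\inner{G\bar y(T)}{y_w(T)}=0 .
\end{equation*}
Here the symmetry of $Q$, $R$ and $G$ is used to combine the cross terms.

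\textbf{Step 3: eliminate $y_w$ by duality.} Repeat the integration-by-parts computation of \eqref{PFOD.2} and \eqref{SOD.11}. Differentiating $\inner{h}{y_w}$ and using \eqref{h} together with $y_w(0)=0$ gives
\begin{equation*}
	\inner{G\bar y(T)}{y_w(T)}=\int_0^T\bigl\{\inner{B^\top h}{w}-\inner{Q\bar y+S^\top\bar v}{y_w}\bigr\}\dd t .
\end{equation*}
Substituting this into the identity of Step 2 cancels every term involving $y_w$, leaving
\begin{equation*}
	\int_0^T\inner{B^\top h+S\bar y+R\bar v+\rho}{w}\dd t=0 \quad \text{for all } w\in\LT .
\end{equation*}
Hence $B^\top h+S\bar y+R\bar v+\rho=0$ for a.e.\ $t\in[0,T]$.

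\textbf{Step 4: solve for $\bar v$.} By \eqref{UPD}, $R(t)\ge\lambda I$, so $R(t)$ is invertible with $\abs{R(t)^{-1}}\le\lambda^{-1}$ for a.e.\ $t$. Solving the identity of Step 3 for $\bar v$ gives \eqref{feedback-solution-yh} almost everywhere. This is the intended meaning, since $\bar v\in\LT$ is defined only up to null sets.

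No step is a serious obstacle. The only point needing care is the bookkeeping in Step 3: the transposes must come out right ($S^\top\bar v$ appears in the adjoint equation, while $S\bar y$ appears in the stationarity condition), and the integration by parts must be justified for absolutely continuous $h$ and $y_w$. Uniqueness of the optimal pair is not needed for the derivation; optimality alone suffices.
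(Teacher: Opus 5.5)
Your proposal is correct and follows essentially the same route as the paper, whose proof simply defers to the variational argument of \cite[Theorem 2.3 (Chapter 6)]{YongZhou1999}: perturb the optimal control, eliminate the state variation by integrating $\langle h, y_w\rangle$ against the adjoint equation, and invert $R$ using \eqref{UPD}. Your write-out spells out the one modification the paper points to, namely that the linear term $\rho$ passes directly into the stationarity condition. Your remarks are also accurate: only optimality, not uniqueness, is used, and \eqref{feedback-solution-yh} holds almost everywhere.
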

\begin{proof}
	The integral term in the cost functional of \eqref{LQL} differs from that of the linear-quadratic optimal control problem considered in \cite[Theorem 2.3 (Chapter 6)]{YongZhou1999} only by the additional linear term $\inner{\rho(t)}{v(t)}$. The argument used in the proof of \cite[Theorem 2.3 (Chapter 6)]{YongZhou1999} also applies here and proves Lemma~\ref{LFS}.
\end{proof}

\begin{theorem}[Invertibility of the second derivative]\label{TSODP}
	\Assume hold. Let $\bar{u(\cdot)} \in \LT$, and suppose that \eqref{SSC} holds at this point.
	\begin{enumerate}
		\item The operator $J''(\bar{u(\cdot)}) \in \mathbb{L}(\LT)$ is invertible, and its inverse $J''(\bar{u(\cdot)})^{-1} \in \mathbb{L}(\LT)$ satisfies
		\begin{equation}\label{SODPT-bar}
			\norm{
				J''(\bar{u(\cdot)})^{-1}
			}_{\mathbb{L}(\LT)}
			\le \frac{1}{\alpha}.
		\end{equation}
		If $b_{uu},f_{uu}$ are independent of $u$, then for every $\eta \in (0,\alpha)$ there exists a constant $r_1 > 0$ such that, for every $u(\cdot) \in B_{\LT}(\bar{u},r_1)$, the operator $J''(u(\cdot)) \in \mathbb{L}(\LT)$ is invertible and satisfies
		\begin{equation}\label{SODPT}
			\norm{
				J''(u(\cdot))^{-1}
			}_{\mathbb{L}(\LT)}
			\le \frac{1}{\eta}.
		\end{equation}
		Moreover, with $C_4$ given by \eqref{C4} for $\beta=2$, one may take
		\begin{equation}\label{r1}
			r_1 = \frac{\alpha-\eta}{C_4(1+\norm{\bar{u}}_1)}.
		\end{equation}
		
		\item If $\bar{u(\cdot)} \in \LI$, then there exists a constant $\gamma > 0$ such that, for every $\eta \in (0,\gamma)$, there exists a constant $r_2 > 0$ satisfying
		\begin{equation}\label{SODPI}
			\norm{
				J''(u(\cdot))^{-1}
			}_{\mathbb{L}(\LI)}
			\le \frac{1}{\eta},
			\quad u(\cdot) \in B_{\LI}(\bar{u},r_2)
		\end{equation}
		Moreover, $\gamma$ and $r_2$ are given by
		\begin{equation}\label{gamma-r2}
			\left\{\begin{aligned}
				& C_h
				= C_1 e^{LT} (TLe^{LT}+1) + L^2e^{2LT}, \\
				& \gamma
				= \frac{\alpha^2}{
					T ( LC_h + C_1Le^{LT} )
					(1+\norm{\bar{u}}_1)
					+ \alpha
				}, \\
				& r_2
				= \frac{
					\gamma - \eta
				}{C_3 (1+\norm{\bar{u}}_1)}.
			\end{aligned}\right.
		\end{equation}
	\end{enumerate}
\end{theorem}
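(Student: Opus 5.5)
The plan is to treat $\LT$-invertibility with a Lax--Milgram argument built on the symmetric representation \eqref{SOD-represation}. The $\LI$ estimate then follows by using the explicit pointwise structure \eqref{SOD} of $J''(\bar u)$ to bootstrap $\LT$ bounds into $\LI$ bounds. In both parts, the passage from $\bar u$ to nearby $u$ is a perturbation argument based on \cref{TSODC}.

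\emph{Part (i).} By \eqref{SOD-represation}, the bilinear form $(v,w)\mapsto\inner{J''(\bar u)v}{w}_{\LT}$ is symmetric, so $J''(\bar u)$ is a bounded self-adjoint operator on $\LT$. By \cref{RSSC}, condition \eqref{SSC} says exactly that $\inner{J''(\bar u)v}{v}_{\LT}\ge\alpha\norm{v}_2^2$. Lax--Milgram (or simply Cauchy--Schwarz, $\alpha\norm{v}_2\le\norm{J''(\bar u)v}_2$, together with self-adjointness, which gives closed and dense range) then yields invertibility and \eqref{SODPT-bar}.

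For nearby $u$, I apply \cref{TSODC}(ii) with $\beta=2$, interchanging the roles of $u$ and $\bar u$. This is allowed because the estimates in \cref{LExpH,LEypsi} hold for an arbitrary pair of admissible triples, with the prefactor $(1+\norm{\cdot}_1)$ taken at either one. It gives
\[
\norm{J''(u)-J''(\bar u)}_{\mathbb{L}(\LT)}\le C_4(1+\norm{\bar u}_1)\norm{u-\bar u}_2\le \alpha-\eta
\]
for $u\in B_{\LT}(\bar u,r_1)$. Hence $\inner{J''(u)v}{v}\ge(\alpha-(\alpha-\eta))\norm{v}_2^2=\eta\norm{v}_2^2$. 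Since $J''(u)$ is again self-adjoint by \eqref{SOD-represation}, the same argument gives \eqref{SODPT}.

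\emph{Part (ii).} Let $\bar u\in\LI$ and $w\in\LI$, and let $v=J''(\bar u)^{-1}w\in\LT$, which exists by (i). With the coefficients \eqref{symbol-bar}, equation \eqref{SOD} reads $Sy+Rv+B^\top\psi=w$. By \cref{TSSCE}, $R\ge\alpha I$, so
\[
v=R^{-1}\bigl(w-Sy-B^\top\psi\bigr),\qquad \abs{v(t)}\le\tfrac1\alpha\bigl(\abs{w(t)}+\abs{S(t)}\abs{y(t)}+L\abs{\psi(t)}\bigr).
\]
This is the representation of \cref{LFS} with $h=\psi$ and $\rho=-w$.

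Next I bound $y$ and $\psi$ in sup norm by $\norm{v}_2$ via Gronwall, exactly as in \cref{LEypsi}. For $y$ this gives $\norm{y}_\infty\le Le^{LT}\norm{v}_1$. For $\psi$ it gives a bound of the form $C_h(1+\norm{\bar u}_1)\norm{v}_1$, where $C_h=C_1e^{LT}(TLe^{LT}+1)+L^2e^{2LT}$ collects the contributions of the terminal condition $g_{xx}y(T)$, of $H_{xx}y$, and of $H_{xu}v$. Using $\abs{S}\le C_1(1+\norm{\bar u}_1)$ from \eqref{EH}, and then $\norm{v}_1\le\sqrt T\norm{v}_2\le\frac{\sqrt T}{\alpha}\norm{w}_2\le\frac{T}{\alpha}\norm{w}_\infty$ from \eqref{SODPT-bar}, I obtain
\[
\norm{v}_\infty\le\frac1\alpha\Bigl(1+\frac{T(LC_h+C_1Le^{LT})(1+\norm{\bar u}_1)}{\alpha}\Bigr)\norm{w}_\infty=\frac1\gamma\norm{w}_\infty .
\]
Thus $J''(\bar u)^{-1}$ maps $\LI$ into $\LI$, and $\norm{J''(\bar u)^{-1}}_{\mathbb{L}(\LI)}\le1/\gamma$. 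Injectivity on $\LI$ is inherited from $\LT$, and \cref{TSOD}(iii) gives $J''(\bar u)\in\mathbb{L}(\LI)$, so this is a genuine inverse in $\mathbb{L}(\LI)$.

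Finally, for $u\in B_{\LI}(\bar u,r_2)$, \cref{TSODC}(i) with $\beta=\infty$ (again with the roles of $u$ and $\bar u$ swapped) gives $\norm{J''(u)-J''(\bar u)}_{\mathbb{L}(\LI)}\le C_3(1+\norm{\bar u}_1)r_2=\gamma-\eta$. Writing $J''(u)=J''(\bar u)\bigl[I+J''(\bar u)^{-1}(J''(u)-J''(\bar u))\bigr]$, the perturbation has norm at most $(\gamma-\eta)/\gamma<1$. A Neumann series then yields invertibility and
\[
\norm{J''(u)^{-1}}_{\mathbb{L}(\LI)}\le\frac{1/\gamma}{1-(\gamma-\eta)/\gamma}=\frac1\eta,
\]
which is \eqref{SODPI}.

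The main obstacle is the sup-norm estimate for $\psi$ with the exact constant $C_h$. It requires carefully tracking the terminal condition and the $H_{xx}y$, $H_{xu}v$ forcing terms through the backward Gronwall inequality. I would first redo the proof of \eqref{Epsi} at $\bar u$, keeping $L$ and $C_1$ separate rather than absorbing them into $C_2$.
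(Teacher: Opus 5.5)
Your proposal is correct and reproduces the paper's constants exactly ($C_h$, $\gamma$, $r_1$, $r_2$). Part (i) and the Neumann-series step in part (ii) match the paper. Your appeal to self-adjointness or Lax--Milgram replaces the paper's direct argument, namely that a nonzero $w\perp\mathrm{Im}\,J''(\bar u)$ would give $\inner{J''(\bar u)w}{w}_{\LT}=0$; this difference is only cosmetic.

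The genuine difference is in how you obtain the pointwise representation of $J''(\bar u)^{-1}w$. The paper introduces the perturbed LQ problem \eqref{SODPI.1} and identifies its unique minimizer with $-J''(\bar u)^{-1}\eps$ via \cref{TSOD}(ii). It then invokes \cref{LFS}, taken from Yong--Zhou's LQ theory, to produce an adjoint $h^\eps$ and the formula \eqref{SODPI.4}. You instead read the same formula directly off the operator representation \eqref{SOD}: $w=Sy+Rv+B^\top\psi$ a.e., where the adjoint variation $\psi$ from \eqref{psi} satisfies exactly the equation \eqref{h} and so plays the role of $h^\eps$.

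Your route is shorter and self-contained. It needs only \cref{TSOD}(i) and the bound $R\ge\alpha I$, and it makes clear that the $\LI$ gain comes from solving an algebraic equation for $v(t)$ whose other terms are already bounded in sup norm. The paper's route buys the interpretation of $J''(\bar u)^{-1}\eps$ as the solution of an LQ subproblem of the same type as the Newton subproblem \eqref{DOCPk}. This keeps the argument within the LQ framework the paper then reuses, through \cref{LFS}, in \cref{TQCI}.

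One small citation fix: \cref{TSSCE} assumes $\bar u$ is optimal, which is not a hypothesis of this theorem. Cite \cref{P-convex-positive} instead, with uniform convexity constant $\alpha/2$ for $J_L$, to get $R\ge\alpha I$, as the paper does.
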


\begin{proof}
	(i) Suppose that $J''(\bar{u(\cdot)})$ is not injective. Then there exist distinct $v_1(\cdot),v_2(\cdot)\in \LT$ for which \eqref{SSC} gives
	\begin{equation*}
		0
		= \inner{J''(\bar{u(\cdot)})v(\cdot)}{v(\cdot)}_{\LT}
		\ge \alpha \norm{v}_2^2 > 0, \quad v(\cdot)=v_1(\cdot)-v_2(\cdot),
	\end{equation*}
	This is a contradiction. Hence $J''(\bar{u(\cdot)})$ is injective. For any Cauchy sequence $\{J''(\bar{u(\cdot)})v_n(\cdot)\}$ in $\LT$, condition~\eqref{SSC} yields
	\begin{equation*}
		\norm{v_m-v_n}_2 
		\le \frac{1}{\alpha}\norm{
			J''(\bar{u(\cdot)})
			( v_m(\cdot)-v_n(\cdot) )
		}_2, \quad \forall n,m\in \NN.
	\end{equation*}
	Thus $\{v_n(\cdot)\}$ is also a Cauchy sequence in $\LT$. Since $\LT$ is a Banach space, there exists $v(\cdot) \in \LT$ such that
	\begin{equation*}
		\underset{n \to \infty}{\lim} v_n(\cdot) = v(\cdot).
	\end{equation*}
	The boundedness of $J''(\bar{u(\cdot)})$ then implies
	\begin{equation*}
		\underset{n\to\infty}{\lim}J''(\bar{u(\cdot)})v_n(\cdot)
		= J''(\bar{u(\cdot)})v(\cdot).
	\end{equation*}
	Therefore, the range $\mathrm{Im}(J''(\bar{u(\cdot)}))$ of $J''(\bar{u(\cdot)})$ is closed in $\LT$. Suppose that $\mathrm{Im}(J''(\bar{u(\cdot)})) \ne \LT$. Since $\mathrm{Im}(J''(\bar{u(\cdot)}))$ is closed and $\LT$ is a Hilbert space, the orthogonal decomposition of $\LT$ gives a nonzero $w(\cdot)\in \LT$ such that
	\begin{equation*}
		\inner{	J''(\bar{u(\cdot)})v(\cdot)}{w(\cdot)}_{\LT}
		= 0, \quad \forall v(\cdot) \in \LT.
	\end{equation*}
	In particular, taking $v(\cdot) = w(\cdot)$ gives
	\begin{equation*}
		\inner{	J''(\bar{u(\cdot)})w(\cdot)}{w(\cdot)}_{\LT} = 0,
	\end{equation*}
	This contradicts \eqref{SSC}. Hence $\mathrm{Im}(J''(\bar{u(\cdot)}))=\LT$, and $J''(\bar{u(\cdot)})$ is bijective.
	
	Consequently, the inverse $J''(\bar{u(\cdot)})^{-1}:\LT \to \LT$ of $J''(\bar{u(\cdot)})$ exists. For every $v(\cdot)\in \LT$, condition~\eqref{SSC} gives
	\begin{equation*}
		\alpha \norm{v}_2^2 
		\le \inner{J''(\bar{u(\cdot)})v(\cdot)}{v(\cdot)}_{\LT}
		\le \norm{J''(\bar{u(\cdot)})v(\cdot)}_2
		\norm{v}_2,
	\end{equation*}
	that is,
	\begin{equation}\label{SODPT.1}
		\norm{v}_2
		\le \frac{1}{\alpha} \norm{J''(\bar{u(\cdot)})v(\cdot)}_2,
		\quad \forall v(\cdot) \in \LT.
	\end{equation}
	It follows that $J''(\bar{u(\cdot)})^{-1}$ satisfies
	\begin{equation*}
		\norm{ J''(\bar{u(\cdot)})^{-1} }_{\mathbb{L}(\LT)}
		= \underset{v(\cdot)\in \LT\setminus\{0\}}{\sup}\frac{\norm{v}_2}
		{\norm{J''(\bar{u(\cdot)})v(\cdot)}_2}
		\le \frac{1}{\alpha}.
	\end{equation*}
	
	When $b_{uu},f_{uu}$ are independent of $u$, interchange $u,\bar u$ in Theorem~\ref{TSODC}(ii) and take $\beta=2$. H{\"o}lder's inequality then gives, for every $v(\cdot) \in \LT$,
	\begin{equation}\label{SODPT.2}
		\begin{aligned}
			\abs*{
				\inner{
					( J''(u(\cdot))- J''(\bar{u(\cdot)}) ) v(\cdot)
				}{v(\cdot)}_{\LT}
			} 
			\le & \norm{
				J''(u(\cdot))- J''(\bar{u(\cdot)})
			}_{\mathbb{L}(\LT)} \norm{v}_2^2 \\
			\le & C_4 (1+\norm{\bar{u}}_1) \norm{u-\bar{u}}_2 \norm{v}_2^2
		\end{aligned}
	\end{equation}
	For any $\eta \in (0,\alpha)$, let $r_1$ be defined by \eqref{r1}. If $u(\cdot)\in B_{\LT}(\bar{u},r_1)$, then \eqref{SODPT.2} implies
	\begin{equation}\label{SODPT.3}
		\begin{aligned}
			& J''(u(\cdot))(v(\cdot),v(\cdot)) \\
			\ge & \inner{
				J''(\bar{u(\cdot)})v(\cdot)
			}{v(\cdot)}_{\LT}
			- \abs*{
				\inner{
					( J''(u(\cdot))- J''(\bar{u(\cdot)}) ) v(\cdot)
				}{v(\cdot)}_{\LT}
			} \\
			\ge & \eta\norm{v}_2^2, \quad \forall v(\cdot)\in \LT.
		\end{aligned}
	\end{equation}
	The same argument as in the proof of \eqref{SODPT-bar} establishes \eqref{SODPT}.
	
	(ii) Using the notation in \eqref{symbol-bar}, consider the following perturbed linear-quadratic optimal control problem:
	\begin{equation}\label{SODPI.1}
		\begin{aligned}
			& \underset{v(\cdot)\in  \LT}{\mathrm{min}}~
			\frac{1}{2}\int_{0}^{T}\Biggl\{
			\inner*{ 
				\left(\begin{matrix}
					Q(t) & S(t)^\top \\
					S(t) & R(t)
				\end{matrix}\right)
				\left(\begin{matrix}
					y(t) \\
					v(t)
				\end{matrix}\right)}{
				\left(\begin{matrix}
					y(t) \\
					v(t)
				\end{matrix}\right)
			} 
			+ 2 \inner{\eps(t)}{v(t)}
			\Biggr\} \dd t
			+ \frac{1}{2} \inner{Gy(T)}{y(T)} \\
			& \hspace{1em}\mathrm{s.t.}\;
			\left\{\begin{aligned}
				& \dot{y}(t)=A(t)y(t)+B(t)v(t), 
				\quad t\in[0,T], \\
				& y(0) = 0,
			\end{aligned}\right.
		\end{aligned}
	\end{equation}
	where $\eps(\cdot) \in \LI$ is the perturbation. By \cref{TSOD}(ii), problem~\eqref{SODPI.1} is equivalent to
	\begin{equation}\label{SODPI.1-simple}
		\underset{v(\cdot)\in \LT}{\mathrm{min}} \;
		\inner{\eps(\cdot)}{v(\cdot)}_{\LT}
		+ \frac{1}{2}
		\inner{
			J''(\bar{u(\cdot)})v(\cdot)
		}{v(\cdot)}_{\LT}.
	\end{equation}
	By (i), the operator $J''(\bar{u(\cdot)}) \in \mathbb{L}(\LT)$ is invertible with a bounded inverse. Hence \eqref{SODPI.1} has a unique optimal pair $(\bar{y}^\eps(\cdot),\bar{v}^\eps(\cdot))$ satisfying
	\begin{equation}\label{SODPI.2}
		\bar{v}^\eps(\cdot)
		= - J''(\bar{u(\cdot)})^{-1} \eps(\cdot).
	\end{equation}
	By \eqref{SSC}, $\alpha/2$ is a uniform convexity constant for the corresponding $J_L$. Thus \cref{P-convex-positive} yields $R(t)\ge\alpha I$ a.e. By \cref{LFS}, there exists a function $h^\eps(\cdot)$ satisfying
	\begin{equation*}
		\left\{\begin{aligned}
			& \dot{h^\eps}(t)
			= - A(t)^\top h^\eps(t)
			- Q(t)\bar{y}^\eps(t)
			- S(t)^\top \bar{v}^\eps(t),
			\quad t \in [0,T], \\
			& h^\eps(T) = G\bar{y}^\eps(T)
		\end{aligned}\right.
	\end{equation*}
	and the optimal control admits the representation
	\begin{equation}\label{SODPI.4}
		\bar{v}^\eps(t)
		= - R(t)^{-1}
		[
		B(t)^\top h^\eps(t)
		+ S(t)\bar{y}^\eps(t)
		+ \eps(t)
		], \quad t\in[0,T].
	\end{equation}
	Thus the optimal control $\bar{v}^\eps(\cdot)$ of problem~\eqref{SODPI.1} belongs to $\LI$. By (i) and \eqref{SODPI.1},
	\begin{equation}\label{SODPI.3}
		\norm{\bar{v}^\eps}_2
		\le \frac{1}{\alpha} \norm{\eps}_2
		\le \frac{\sqrt{T}}{\alpha} \norm{\eps}_\infty.
	\end{equation}
	Equation~\eqref{SODPI.1} and Gronwall's inequality give
	\begin{equation}\label{SODPI.5}
		\underset{t\in[0,T]}{\sup}
		\abs{\bar{y}^\eps(t)}
		\le Le^{LT}\norm{\bar{v}^\eps}_1.
	\end{equation}
	By \eqref{h}, \eqref{SODPI.4}, and Gronwall's inequality,
	\begin{align*}
		\underset{t\in[0,T]}{\sup}
		\abs{h^\eps(t)}
		\le & e^{\int_{0}^{T}\abs{A(t)}\dd t}
		\Bigl[
		\abs{G\bar{y}^\eps(T)}
		+ \norm{Q\bar{y}^\eps}_1
		+ \norm{S^\top\bar{v}^\eps}_1
		\Bigr] \\
		\le & e^{LT}
		\Bigl[
		C_1(1+\norm{\bar{u}}_1)(TLe^{LT}+1)
		+ L^2e^{LT}
		\Bigr] \norm{\bar{v}^\eps}_1.
	\end{align*}
	Setting $	C_h = C_1 e^{LT} (TLe^{LT}+1) + L^2e^{2LT} $,
	we obtain
	\begin{equation}\label{SODPI.6}
		\underset{t\in[0,T]}{\sup}
		\abs{h^\eps(t)}
		\le C_h (1+\norm{\bar{u}}_1)
		\norm{\bar{v}^\eps}_1.
	\end{equation}
	Combining \eqref{SODPI.2}--\eqref{SODPI.6} and applying \cref{A1}, \cref{LExpH}, and \cref{P-equivalence}, we obtain
	\begin{equation}
		\begin{aligned}
			\abs{\bar{v}^\eps(t)}
			\le & \abs{R(t)^{-1}}
			\Bigl[
			\abs{B(t)}\abs{h^\eps(t)}
			+ \abs{S(t)}\abs{\bar{y}^\eps(t)}
			+ \abs{\eps(t)}
			\Bigr] \\
			\le & \frac{\norm{\bar{v}^\eps}_1}{\alpha}
			( LC_h + C_1Le^{LT} ) (1+\norm{\bar{u}}_1)
			+ \frac{\abs{\eps(t)}}{\alpha} \\
			\le & \frac{T\norm{\eps}_\infty}{\alpha^2}
			( LC_h + C_1Le^{LT} ) (1+\norm{\bar{u}}_1)
			+ \frac{\abs{\eps(t)}}{\alpha}.
		\end{aligned}
	\end{equation}
	With $\gamma$ defined by \eqref{gamma-r2}, combining \eqref{SODPI.1} and \eqref{SODPI.6} therefore yields
	\begin{equation}\label{SODPI.7}
		\norm{
			J''(\bar{u(\cdot)})^{-1}
		}_{\mathbb{L}(\LI)}
		= \underset{\eps(\cdot) \in \LI\setminus\{0\}}{\sup}
		\frac{\norm{\bar{v}^\eps}_\infty}{\norm{\eps}_\infty}
		\le \frac{1}{\gamma}
	\end{equation}
	For a given $\eta \in (0,\gamma)$, define $r_2$ as in \eqref{gamma-r2}. Interchanging $u,\bar u$ in \cref{TSODC}(i) gives
	\begin{equation}\label{SODPI.8}
		\left\{\begin{aligned}
			& \norm{ J''(u(\cdot)) - J''(\bar{u(\cdot)}) }_{\mathbb{L}(\LI)}
			\le C_3 (1+\norm{\bar{u}}_1) \norm{u-\bar{u}}_\infty
			\le \gamma - \eta, \\
			& \norm{J''(\bar{u(\cdot)})^{-1}}_{\mathbb{L}(\LI)}
			\norm{ J''(u(\cdot)) - J''(\bar{u(\cdot)}) }_{\mathbb{L}(\LI)}
			\le \frac{\gamma - \eta}{\gamma}
			< 1.
		\end{aligned}\right.
	\end{equation}
	We also have
	\begin{equation}\label{SODPI.9}
		J''(u(\cdot))
		= J''(\bar{u(\cdot)})
		[
		I
		+ J''(\bar{u(\cdot)})^{-1}
		( J''(u(\cdot)) - J''(\bar{u(\cdot)}) )
		].
	\end{equation}
	Combining \eqref{SODPI.7}--\eqref{SODPI.9} and applying the Neumann series theorem (see \cite[Sec.~2.3.2]{AtkinsonHan2005}), we find that $J''(u(\cdot)): \LI \to \LI$ is invertible and satisfies
	\begin{equation*}
		\norm{J''(u(\cdot))^{-1}}_{\mathbb{L}(\LI)}
		\le \frac{
			\norm{J''(\bar{u(\cdot)})^{-1}}_{\mathbb{L}(\LI)}
		}{
			1- \norm{J''(\bar{u(\cdot)})^{-1}}_{\mathbb{L}(\LI)}
			\norm{J''(u(\cdot)) - J''(\bar{u(\cdot)})}_{\mathbb{L}(\LI)}
		}
		\le \frac{1}{\eta}.
	\end{equation*} 
\end{proof}

\begin{theorem}\label{TQCI}
	\Assume hold. Let $\bar{u(\cdot)} \in \LI$ be an optimal control for DOCP, and suppose that the second-order coercivity condition~\eqref{SSC} holds at $\bar{u(\cdot)}$. If the initial control $u^0(\cdot)$ is sufficiently close to $\bar{u(\cdot)}$ in $\LI$, then the following statements hold:
	\begin{enumerate}
		\item The sequence $\{u^k(\cdot)\}$ generated by \cref{Algorithm-DOCP-Newton} converges quadratically to $\bar{u(\cdot)}$ in $\LI$.
		
		\item A radius $r_3$ that guarantees convergence is given by
		\begin{equation}\label{QCRI}
			r_3
			= \min \left\{
			\frac{\alpha}{C_3(1+\norm{\bar{u}}_1)},
			\frac{
				\sqrt{
					C_3^2 (1+\norm{\bar{u}}_1)^2
					+ C_3 T \gamma
				}
				- C_3 (1+\norm{\bar{u}}_1)
			}{C_3T}
			\right\}
		\end{equation}
		where $C_3$ is given by \eqref{C3} and $\gamma$ by \eqref{gamma-r2}.
		
		\item The gradients ${J'(u^k(\cdot))}$ converge quadratically to zero in $\LI$.
	\end{enumerate}
\end{theorem}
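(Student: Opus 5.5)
The plan is the classical Newton--Kantorovich argument carried out entirely in $\LI$. It uses the $\LI$-invertibility of \cref{TSODP}(ii) and the $\LI$-Lipschitz estimate \eqref{SODCI} of \cref{TSODC}(i) with $\beta=\infty$.

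\textbf{Newton step.} First I identify the Newton step. By \cref{TSOD}(ii), the cost of subproblem \eqref{DOCPk} equals $\inner{J'(u^k)}{v}_{\LT}+\frac12\inner{J''(u^k)v}{v}_{\LT}$. Whenever $J''(u^k)$ is uniformly coercive on $\LT$, the unique minimizer therefore satisfies $J''(u^k)v^k=-J'(u^k)$. For $u^k\in B_{\LI}(\bar u,r_3)$, the first entry of \eqref{QCRI} together with \eqref{SODCI} gives
\[
\abs{\inner{(J''(u^k)-J''(\bar u))v}{v}}\le C_3(1+\norm{\bar u}_1)\norm{u^k-\bar u}_\infty\norm{v}_2^2<\alpha\norm{v}_2^2 .
\]
Since \eqref{SODCI} is stated with $\beta$ arbitrary, I take $\beta=2$ here. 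Hence $J''(u^k)$ is coercive on $\LT$, and the subproblem is well posed. Next I need $v^k\in\LI$ and $u^{k+1}\in\LI$. Since $J'(u^k)=H_u^k\in\LI$ for bounded $u^k$, the $\LI$-invertibility of $J''(u^k)$ from \cref{TSODP}(ii) gives an $\LI$ solution. By $\LT$-uniqueness it coincides with $v^k$. This requires $r_3\le r_2$ for some $\eta$, and I will choose $\eta$ below.

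\textbf{Error recursion.} Using $J'(\bar u)=0$ and the fundamental theorem of calculus along the segment $\bar u+\theta(u^k-\bar u)$, I write
\[
u^{k+1}-\bar u=J''(u^k)^{-1}\int_0^1\bigl(J''(u^k)-J''(\bar u+\theta(u^k-\bar u))\bigr)(u^k-\bar u)\dd\theta .
\]
The integral representation of $J'(u^k)-J'(\bar u)$ follows from the Gâteaux differentiability of $J'$ together with the continuity of $J''$ along segments in $\LI$, which \eqref{SODCI} provides. Applying \eqref{SODCI} with the first argument $\bar u+\theta(u^k-\bar u)$ yields a factor $(1+\norm{\bar u}_1+\theta\norm{u^k-\bar u}_1)\le(1+\norm{\bar u}_1+T\norm{u^k-\bar u}_\infty)$. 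This gives
\[
\norm{u^{k+1}-\bar u}_\infty\le \tfrac{1}{\eta_k}\,\tfrac{C_3}{2}\bigl(1+\norm{\bar u}_1+T e_k\bigr)e_k^2,\qquad e_k=\norm{u^k-\bar u}_\infty,
\]
where $1/\eta_k$ bounds $\norm{J''(u^k)^{-1}}_{\mathbb{L}(\LI)}$.

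\textbf{Neumann bound and contraction.} To bound $\eta_k$ explicitly, I use the Neumann series argument from the proof of \cref{TSODP}(ii) directly: $\eta_k\ge\gamma-C_3(1+\norm{\bar u}_1)e_k$. Contraction requires $e_{k+1}<e_k$. It suffices that $\frac{C_3}{2}(1+\norm{\bar u}_1+Te)e<\gamma-C_3(1+\norm{\bar u}_1)e$, or a similar quadratic inequality in $e$. The second entry of \eqref{QCRI} is exactly the positive root of a quadratic of the form $C_3Te^2+2C_3(1+\norm{\bar u}_1)e-\gamma=0$. So I will arrange the constants, possibly absorbing factors, so that $e<r_3$ implies $e_{k+1}\le q e_k$ with some $q<1$. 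Then induction keeps all iterates in the ball, and $e_{k+1}\le Ce_k^2$ gives quadratic convergence, which is (i) and (ii).

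\textbf{Gradients.} For (iii), $J'(u^{k})=J'(u^k)-J'(\bar u)$ is bounded in $\LI$ by $\sup\norm{J''}_{\mathbb{L}(\LI)}\,e_k$. This is uniformly bounded near $\bar u$ by \eqref{SODCI} and \cref{TSOD}(iii), so $\norm{J'(u^{k+1})}_\infty\lesssim e_{k+1}\lesssim e_k^2$. Conversely, $e_k\le\eta_k^{-1}\norm{J'(u^k)}_\infty$, again through the integral identity and the Neumann bound. Combining the two gives $\norm{J'(u^{k+1})}_\infty\le C\norm{J'(u^k)}_\infty^2$.

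\textbf{Main obstacle.} I expect the hardest part to be matching the explicit radius \eqref{QCRI} precisely. This means tracking the $(1+\norm{u}_1)$ factor at intermediate points and the $\LI$ inverse bound simultaneously, and justifying that the $\LT$-minimizer of \eqref{DOCPk} equals the $\LI$ Newton direction.
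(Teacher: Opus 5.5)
Your proposal is correct and is essentially the paper's own argument. Both use $\LT$-coercivity of $J''(u^k)$ from the first entry of \eqref{QCRI}, the $\LI$ inverse bound of \cref{TSODP}(ii) via a Neumann series, and the integral remainder controlled by \eqref{SODCI}. The differences are minor. You use the iterate-dependent bound $\eta_k=\gamma-C_3(1+\norm{\bar u}_1)e_k$, whereas the paper fixes $\eta\in(0,\gamma)$ and then maximizes $\delta(\eta)$. You get $v^k\in\LI$ from $\LI$-invertibility plus $\LT$-uniqueness, whereas the paper uses the feedback formula of \cref{LFS}. For (iii) you compare $\norm{J'(u^k)}_\infty$ with $e_k$ in both directions, whereas the paper estimates $J'(u^{k+1})-J'(u^k)-J''(u^k)v^k$ directly.

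The constant-matching you left open closes exactly. Set $a=C_3(1+\norm{\bar u}_1)$ and let $r$ be the second entry of \eqref{QCRI}. The identity $C_3Tr^2+2ar=\gamma$ gives $(ar+C_3Tr^2)/(2(\gamma-ar))=\tfrac12$, and it also gives $ar<\gamma/2\le\alpha/2$. Hence $e_k\le r_3$ implies $e_{k+1}\le\tfrac12 e_k$, and the coercivity and Neumann conditions remain valid.
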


\begin{proof}
	(i) By \cref{TSOD}(ii), the Newton subproblem~\eqref{DOCPk} can be written as the following unconstrained problem:
	\begin{equation}\label{QCI.1}
		\underset{v(\cdot) \in \LT}{\min}
		\inner{J'(u^k(\cdot))}{v(\cdot)}_{\LT}
		+ \frac{1}{2} \inner{J''(u^k(\cdot))v(\cdot)}{v(\cdot)}_{\LT}.
	\end{equation}
	By \cref{TSODC}(i), if $\norm{u^k-\bar{u}}_\infty < \frac{\alpha}{C_3 ( 1+\norm{\bar{u}}_1 )} $, then there exists $\eps > 0$ such that
	\begin{equation*}
		\begin{aligned}
			& \inner{J''(u^k(\cdot))v(\cdot)}{v(\cdot)}_{\LT} \\
			\ge & \inner{J''(\bar{u(\cdot)})v(\cdot)}{v(\cdot)}_{\LT}
			- \abs*{
				\inner{
					( J''(\bar{u(\cdot)})-J''(u^k(\cdot)) )
					v(\cdot)
				}{v(\cdot)}_{\LT}
			} \\
			\ge & [
			\alpha
			- C_3 ( 1+\norm{\bar{u}}_1 )
			\norm{u^k-\bar{u}}_\infty
			] \norm{v}_2^2 \\
			\ge & \eps \norm{v}_2^2.
		\end{aligned}
	\end{equation*}
	In what follows, we assume that $\norm{u^k-\bar{u}}_\infty < \frac{\alpha}{C_3 ( 1+\norm{\bar{u}}_1 )} $. The proof of \cref{TSODP}(i) gives $J''(u^k(\cdot))^{-1} \in \mathbb{L}(\LT)$. The preceding coercivity estimate with constant $\eps$ and \cref{P-convex-positive} yield $H_{uu}^k[t]\ge\eps I$ a.e. Together with \cref{LFS}, these facts imply that problem~\eqref{QCI.1} has the unique optimal solution
	\begin{equation}\label{QCI.2}
		v^k(\cdot)
		= - J''(u^k(\cdot))^{-1} J'(u^k(\cdot)) \in \LI.
	\end{equation}
	Consequently,
	\begin{equation}\label{QCI.3}
		\begin{aligned}
		& u^{k+1}(\cdot) - \bar{u(\cdot)} \\
		& = J''(u^k(\cdot))^{-1}\left[
		J''(u^k(\cdot))
		(u^k(\cdot)-\bar{u(\cdot)})
		- ( J'(u^k(\cdot))-J'(\bar{u(\cdot)}) )
		\right].
		\end{aligned}
	\end{equation}
	Theorem~\ref{TSODC} gives the following estimate. The integral identity along the line segment first holds in $\LT$; the operator-norm continuity in \cref{TSODC}(i) with $\beta=\infty$ then shows that the same identity holds in $\LI$:
	\begin{equation}\label{QCI.4}
		\begin{aligned}
			& \norm{
				J''(u^k(\cdot))(u^k(\cdot)-\bar{u(\cdot)})
				- (J'(u^k(\cdot))-J'(\bar{u(\cdot)}))
			}_\infty \\
			= & \norm*{
				\int_{0}^{1}\left[
				J''(u^k(\cdot))
				- J''((u^k+\theta(\bar{u}-u^k))(\cdot))
				\right](u^k(\cdot)-\bar{u(\cdot)}) \dd \theta
			}_\infty \\
			\le & \int_{0}^{1}\norm*{
				\left[
				J''(u^k(\cdot))
				- J''((u^k+\theta(\bar{u}-u^k))(\cdot))
				\right](u^k(\cdot)-\bar{u(\cdot)})
			}_\infty \dd \theta \\
			\le & \int_{0}^{1} \theta C_3 (1+\norm{u^k}_1)
			\norm{u^k-\bar{u}}_\infty^2 \dd \theta \\
			\le & \frac{1}{2}C_3(1+\norm{\bar{u}}_1+\norm{\bar{u}-u^k}_1)
			\norm{u^k-\bar{u}}_\infty^2.
		\end{aligned}
	\end{equation}
	For any $\eta \in (0,\gamma)$, if $u^k(\cdot)\in B_{\LI}(\bar{u},r_2)$, then combining \eqref{QCI.1}--\eqref{QCI.4} gives
	\begin{align}
		\notag
		\norm{u^{k+1}(\cdot) - \bar{u(\cdot)}}_\infty
		\le & \norm{
			J''(u^k(\cdot))^{-1}
		}_{\mathbb{L}(\LI)}
		\norm{
			J''(u^k(\cdot))
			(u^k(\cdot)-\bar{u(\cdot)})
			- ( J'(u^k(\cdot))-J'(\bar{u(\cdot)}) )
		}_\infty \\
		\label{QCI.5}
		\le & \frac{C_3}{2\eta}(1+\norm{\bar{u}}_1+\norm{\bar{u}-u^k}_1)
		\norm{u^k-\bar{u}}_\infty^2 \\
		\notag
		\le & \frac{C_3}{2\eta}(1+\norm{\bar{u}}_1+Tr_2)
		\norm{u^k-\bar{u}}_\infty^2.
	\end{align}
	Therefore, if there exists $\eta\in(0,\gamma)$ such that the initial control $u^0(\cdot)$ satisfies
	
	\begin{equation}\label{QCI.6}
		\norm{u^0-\bar{u}}_\infty
		\le \min \left\{
		\frac{\alpha}{C_3 ( 1+\norm{\bar{u}}_1 )},
		r_2,
		\frac{\eta}{C_3(1+\norm{\bar{u}}_1+Tr_2)}\right\}
		\triangleq \delta(\eta),
	\end{equation}
	
	then all iterates remain in the neighborhood $B_{ \LI}(\bar{u},\delta(\eta))$, and the sequence $\{u^k(\cdot)\}$ generated by Algorithm~\ref{Algorithm-DOCP-Newton} converges quadratically to $\bar{u(\cdot)}$ in $\LI$.
	
	(ii) The maximum value of $\delta(\eta)$ gives the convergence radius guaranteed by the preceding sufficient condition. Substituting the definition of $r_2$ in \eqref{gamma-r2} into \eqref{QCI.6} yields
	\begin{equation}\label{QCI.7}
		\delta(\eta)
		= \min \left\{
		\frac{\alpha}{C_3 ( 1+\norm{\bar{u}}_1 )},
		\frac{\gamma-\eta}{C_3(1+\norm{\bar{u}}_1)},
		\frac{C_3(1+\norm{\bar{u}}_1)\eta}
		{ C_3^2(1+\norm{\bar{u}}_1)^2 + C_3T(\gamma-\eta) }
		\right\}, ~ \eta\in(0,\gamma).
	\end{equation}
	Of the three terms on the right-hand side, the first is independent of $\eta$. The second is strictly decreasing in $\eta$ and tends to $0$ as $\eta\to \gamma^-$, whereas the third is strictly increasing in $\eta$ and tends to $0$ as $\eta\to 0^+$. These two functions therefore have a unique intersection in $(0,\gamma)$, at which $\delta(\cdot)$ attains its maximum. Set
	\begin{equation*}
		\frac{\gamma-\eta}{C_3(1+\norm{\bar{u}}_1)}
		= \frac{C_3(1+\norm{\bar{u}}_1)\eta}
		{(1+\norm{\bar{u}}_1)^2C_3^2+C_3T(\gamma-\eta)},
	\end{equation*}
	which yields
	\begin{equation*}
		\eta_*
		= \frac{
			\gamma\sqrt{C_3^2(1+\norm{\bar{u}}_1)^2+C_3T\gamma}
		}{
			\sqrt{C_3^2(1+\norm{\bar{u}}_1)^2+C_3T\gamma}
			+C_3(1+\norm{\bar{u}}_1)
		}.
	\end{equation*}
	Since $\gamma\le\alpha$, the corresponding maximum is
	\begin{equation*}
		\max_{\eta\in(0,\gamma)}\delta(\eta)
		= \frac{
			\sqrt{C_3^2(1+\norm{\bar{u}}_1)^2+C_3T\gamma}
			-C_3(1+\norm{\bar{u}}_1)
		}{C_3T},
	\end{equation*}
	which gives \eqref{QCRI}.
	
	(iii) For any $\eta \in (0,\gamma)$, if $u^k(\cdot) \in B_{\LI}(\bar{u},r_2)$, then the argument used to derive \eqref{QCI.4}, together with \eqref{QCI.2}, gives
	\begin{align*}
		& \norm{J'(u^{k+1}(\cdot))}_\infty \\
		=	& \norm{
			J'(u^{k+1}(\cdot))-J'(u^{k}(\cdot))
			- J''(u^{k}(\cdot))v^k(\cdot)
		}_\infty \\
		=	& \norm*{
			\int_{0}^{1}J''((u^k+\theta v^k)(\cdot))v^k(\cdot)\dd \theta
			- J''(u^k(\cdot))v^k(\cdot)
		}_\infty \\
		\le	& \int_{0}^{1}\norm{
			(
			J''((u^k+\theta v^k)(\cdot))
			- J''(u^k(\cdot))
			) v^k(\cdot)
		}_\infty \dd \theta \\
		\le & \int_{0}^{1} \theta C_3
		(1+\norm{u^k}_1)
		\norm{v^k}_\infty^2 \dd \theta \\
		=	& \frac{1}{2}C_3(1+\norm{u^k}_1)
		\norm{
			J''(u^k(\cdot))^{-1} J'(u^k(\cdot))
		}_\infty^2 \\
		\le & \frac{1}{2 \eta^2} C_3 (1+\norm{u^k}_1)
		\norm{J'(u^k(\cdot))}_\infty^2.
	\end{align*}
	By (i), $1+\norm{u^k}_1\le1+\norm{\bar u}_1+Tr_2$. Equation~\eqref{FOD} and the same pointwise difference estimate as in \eqref{EH.2} give
	\begin{equation*}
		\norm{J'(u^k(\cdot))-J'(\bar{u(\cdot)})}_\infty
		\le C_1(1+\norm{u^k}_1)(T+1)\norm{u^k-\bar u}_\infty\to0.
	\end{equation*}
	Hence the coefficient in the preceding quadratic estimate can be bounded by a constant independent of $k$, and $\{J'(u^k(\cdot))\}$ converges quadratically to zero in $\LI$.
\end{proof}

\begin{theorem}\label{TQCT}
	\Assume hold, and $f_{uu},b_{uu}$ are independent of $u$. Let $\bar{u(\cdot)}\in \LT$ be an optimal control for problem~\eqref{DOCP}, and let $\bar{x}(\cdot)$ and $\bar{p}(\cdot)$ be the corresponding state and adjoint variables, respectively. If the second-order coercivity condition~\eqref{SSC} holds at $\bar{u(\cdot)}$ and the initial control $u^0(\cdot)$ is sufficiently close to $\bar{u(\cdot)}$ in $ \LT$, then the following statements hold:
	\begin{enumerate}
		\item The sequence $\{u^k(\cdot)\}$ generated by Algorithm~\ref{Algorithm-DOCP-Newton} converges quadratically to $\bar{u(\cdot)}$ in $ \LT$.
		
		\item A radius $r_4$ that guarantees convergence is given by
		\begin{equation}\label{QCRT}
			r_4
			= \frac{
				\sqrt{
					C_4^2 (1+\norm{\bar{u}}_1)^2
					+ C_4 \sqrt{T} \alpha
				}
				- C_4 (1+\norm{\bar{u}}_1)
			}{C_4\sqrt{T}},
		\end{equation}
		where $C_4$ is the value in \eqref{C4} for $\beta=2$, and $\alpha$ is given by \eqref{SSC}.
		
		\item The gradients $\{J'(u^k(\cdot))\}$ converge quadratically to zero in $\LT$.
	\end{enumerate}
\end{theorem}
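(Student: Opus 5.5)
The argument follows the proof of \cref{TQCI}, with the $\LI$ ingredients replaced by their $\LT$ counterparts: \cref{TSODC}(ii) with $\beta=2$ for the Lipschitz estimate, and \cref{TSODP}(i) for the inverse bound.

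\textbf{Step 1: the Newton step.} By \cref{TSOD}(ii), subproblem \eqref{DOCPk} is equivalent to minimizing $\inner{J'(u^k)}{v}_{\LT}+\frac12\inner{J''(u^k)v}{v}_{\LT}$ over $\LT$. Suppose $\norm{u^k-\bar u}_2\le r_1$ for some $\eta\in(0,\alpha)$. Then \cref{TSODP}(i) gives the coercivity estimate \eqref{SODPT.3} and the bound $\norm{J''(u^k)^{-1}}_{\mathbb{L}(\LT)}\le 1/\eta$. Hence the subproblem has the unique solution $v^k=-J''(u^k)^{-1}J'(u^k)$, and, exactly as in \eqref{QCI.3},
\[
u^{k+1}-\bar u=J''(u^k)^{-1}\bigl[J''(u^k)(u^k-\bar u)-(J'(u^k)-J'(\bar u))\bigr].
\]

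\textbf{Step 2: the quadratic remainder.} Write $J'(u^k)-J'(\bar u)=\int_0^1 J''(u^k+\theta(\bar u-u^k))(u^k-\bar u)\,\dd\theta$ in $\LT$. This is justified because $J'$ is Fréchet differentiable (\cref{PFOD}) and, by \cref{TSODC}(ii), $J''$ is operator-norm continuous in $\LT$, so $\theta\mapsto J'(u^k+\theta(\bar u-u^k))$ is $C^1$. Applying \cref{TSODC}(ii) with $\beta=2$ and the roles of $u,\bar u$ chosen as in \eqref{QCI.4} gives
\[
\norm{u^{k+1}-\bar u}_2\le \frac{C_4}{2\eta}\bigl(1+\norm{\bar u}_1+\sqrt T\,\norm{u^k-\bar u}_2\bigr)\norm{u^k-\bar u}_2^2 .
\]
Here $\norm{u^k-\bar u}_1\le\sqrt T\norm{u^k-\bar u}_2$ by Hölder's inequality.

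\textbf{Step 3: invariance and the radius.} Define
\[
\delta(\eta)=\min\Bigl\{r_1,\ \frac{\eta}{C_4(1+\norm{\bar u}_1+\sqrt T r_1)}\Bigr\},\qquad r_1=\frac{\alpha-\eta}{C_4(1+\norm{\bar u}_1)} .
\]
If $\norm{u^0-\bar u}_2\le\delta(\eta)$, Step 2 gives $\norm{u^{1}-\bar u}_2\le\frac12\norm{u^0-\bar u}_2$. By induction every iterate stays in the ball and the convergence is quadratic, which proves (i).

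For (ii), maximize $\delta(\eta)$ over $\eta\in(0,\alpha)$. The first term decreases in $\eta$ and the second increases, so the maximum is at their crossing. Write $a=C_4(1+\norm{\bar u}_1)$ and $d$ for the common value at the crossing. Then $\eta=\alpha-ad$, and the crossing condition becomes
\[
C_4\sqrt T\,d^2+2ad-\alpha=0 .
\]
Its positive root is
\[
d=\frac{\sqrt{a^2+C_4\sqrt T\alpha}-a}{C_4\sqrt T},
\]
which is \eqref{QCRT}. This mirrors the computation in \cref{TQCI}(ii).

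\textbf{Step 4: the gradients.} For (iii), as in \cref{TQCI}(iii), $J'(u^{k+1})=J'(u^{k+1})-J'(u^k)-J''(u^k)v^k$. The same integral remainder argument gives
\[
\norm{J'(u^{k+1})}_2\le \frac{C_4}{2\eta^2}(1+\norm{u^k}_1)\norm{J'(u^k)}_2^2 .
\]
The factor $1+\norm{u^k}_1$ is bounded uniformly in $k$ because the iterates stay in the ball. Moreover $J'(u^k)\to J'(\bar u)=0$ in $\LT$, by the continuity estimate in the proof of \cref{PFOD}.

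\textbf{Main obstacle.} The crucial point is that the Newton step is well defined and bounded in $\LT$ uniformly near $\bar u$. This is exactly where the hypothesis that $b_{uu},f_{uu}$ are independent of $u$ enters, through \cref{TSODC}(ii). Without it, \cref{E-continuous} shows that $J''$ is not $\LT$-continuous, and $\LT$-closeness alone would not preserve coercivity. A second delicate point is the constant bookkeeping that makes $\delta(\eta)$ reproduce \eqref{QCRT} exactly. In particular, one must track which argument carries the factor $1+\norm{\cdot}_1$ in \cref{TSODC}(ii) (taking it at $\bar u$, as in \eqref{SODPT.2}), and account for the $\sqrt T$ factor from $\norm{\cdot}_1\le\sqrt T\norm{\cdot}_2$.
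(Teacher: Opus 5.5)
Your proposal is correct and follows the paper's proof essentially step for step. It uses the same ingredients: the Newton step from \cref{TSODP}(i), the integral remainder bounded via \cref{TSODC}(ii) with $\beta=2$ together with $\norm{u^k-\bar u}_1\le\sqrt{T}\norm{u^k-\bar u}_2$, the same $\delta(\eta)$ whose crossing point yields $C_4\sqrt{T}r^2+2C_4(1+\norm{\bar u}_1)r=\alpha$, and the same gradient estimate. Your only addition is an explicit justification of the integral identity in $\LT$. For that justification, the right ingredients are the G{\^a}teaux differentiability of $J'$ from \cref{TSOD}(i) combined with the operator-norm continuity from \cref{TSODC}(ii); \cref{PFOD}, which you cite, only gives differentiability of $J$, not of $J'$.
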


\begin{proof}
	(i) For any $\eta \in (0,\alpha)$, \cref{TSODP} shows that, if $u^k(\cdot) \in B_{\LT}(\bar{u},r_1)$, then the Newton subproblem~\eqref{DOCPk} has the unique solution
	\begin{equation}\label{QCT.1}
		v^k(\cdot)
		= - J''(u^k(\cdot))^{-1} J'(u^k(\cdot))
		\in \LT.
	\end{equation}
	By \cref{TSODC},
	\begin{align}
		\notag
		& \norm{
			J''(u^k(\cdot)) (u^k(\cdot)-\bar{u(\cdot)})
			- ( J'(u^k(\cdot))-J'(\bar{u(\cdot)}) )
		}_2  \\
		\notag
		=	& \norm*{
		J''(u^k(\cdot))
		(u^k(\cdot)-\bar{u(\cdot)})
		- \int_{0}^{1}J''((u^k+\theta(\bar{u}-u^k))(\cdot))
		(u^k(\cdot)-\bar{u(\cdot)}) \dd \theta
		}_2 \\
		\notag
		=	& \left\{\int_{0}^{T}\dd t\abs*{
		\int_{0}^{1}\Bigl[
		\Bigl(
		J''(u^k(\cdot))
		- J''((u^k+\theta(\bar{u}-u^k))(\cdot))
		\Bigr)
		(u^k(\cdot)-\bar{u(\cdot)})
		\Bigr](t) \dd \theta
		}^2\right\}^\frac{1}{2} \\
		\notag
		\overset{(a)}{\le}
		& \int_{0}^{1}\norm*{
			\left[
			J''(u^k(\cdot))
			- J''((u^k+\theta(\bar{u}-u^k))(\cdot))
			\right]
			(u^k(\cdot)-\bar{u(\cdot)})
		}_2 \dd \theta \\
		\notag
		\le & \int_{0}^{1} \theta C_4 (1+\norm{u^k}_1)
		\norm{u^k-\bar{u}}_2^2 \dd \theta \\
		\notag
		= & \frac{1}{2}C_4(1+\norm{u^k}_1) \norm{u^k-\bar{u}}_2^2 \\
		\label{QCT.2}
		\le & \frac{1}{2}C_4(1+\norm{\bar{u}}_1+\norm{\bar{u}-u^k}_1)
		\norm{u^k-\bar{u}}_2^2,
	\end{align}
	where $(a)$ follows from Minkowski's integral inequality. If $u^k(\cdot) \in B_{ \LT}(\bar{u},r_1)$, then combining \eqref{QCT.1}--\eqref{QCT.2} gives
	\begin{align*}
		\norm{u^{k+1}-\bar{u}}_2
		\le & \norm{
			J''(u^k(\cdot))^{-1}
		}_{\mathbb{L}(\LT)}
		\norm{
			J''(u^k(\cdot))
			(u^k(\cdot)-\bar{u(\cdot)})
			- ( J'(u^k(\cdot)) - J'(\bar{u(\cdot)}) )
		}_2   \\
		\le & \frac{C_4}{2\eta}
		( 1 + \norm{\bar{u}}_1 + \norm{u^k-\bar{u}}_1 )
		\norm{u^k-\bar{u}}_2^2 \\
		\le & \frac{C_4}{2\eta}
		(1+\norm{\bar{u}}_1+\sqrt{T}r_1)
		\norm{u^k-\bar{u}}_2^2.
	\end{align*}
	Therefore, if there exists $\eta \in (0,\alpha)$ such that the initial control $u^0(\cdot)$ satisfies
	
	\begin{equation}\label{QCT.3}
		\norm{u^0-\bar{u}}_2
		\le \min\left\{r_1,\frac{\eta}{C_4(1+\norm{\bar{u}}_1+\sqrt{T}r_1)}\right\}
		\triangleq \delta(\eta),
	\end{equation}
	
	then all iterates remain in the neighborhood $B_{ \LT}(\bar{u},\delta(\eta))$, and the sequence $\{u^k(\cdot)\}$ generated by Algorithm~\ref{Algorithm-DOCP-Newton} converges quadratically to $\bar{u(\cdot)}$ in $\LT$.
	
	(ii) As in \cref{TQCI}(ii), at the unique intersection of the two terms in \eqref{QCT.3} within $(0,\alpha)$, we have
	\begin{equation*}
		\alpha-\eta=C_4(1+\norm{\bar u}_1)r_1,\qquad
		\eta=C_4r_1(1+\norm{\bar u}_1+\sqrt{T}r_1).
	\end{equation*}
	Thus $C_4\sqrt{T}r_1^2+2C_4(1+\norm{\bar u}_1)r_1=\alpha$, and its positive root gives \eqref{QCRT}.
	
	\begingroup
	\interdisplaylinepenalty=10000
	(iii) By \eqref{QCT.1}, arguing as in the proof of \cref{TQCI}(iii), we obtain
	\begin{align*}
		&\norm{J'(u^{k+1}(\cdot))}_2\\
		=&\norm*{\int_0^1
		[J''((u^k+\theta v^k)(\cdot))-J''(u^k(\cdot))]
		v^k(\cdot)\dd\theta}_2\\
		\le&\frac{C_4}{2}(1+\norm{u^k}_1)\norm{v^k}_2^2\\
		\le&\frac{C_4}{2\eta^2}(1+\norm{\bar u}_1+\sqrt{T}r_1)
		\norm{J'(u^k(\cdot))}_2^2.
	\end{align*}
	\endgroup
	By (i) and the continuity of $J'$, we have $J'(u^k(\cdot))\to0$. Hence the gradients converge quadratically to $0$ in $\LT$.
\end{proof}

\subsection{Numerical examples}

We first consider a problem to which Theorem~\ref{TQCI} applies but which does not satisfy the additional condition in Theorem~\ref{TQCT}.

\begin{example}\label{ex1}
	Let $T > 0$. Consider the multidimensional extension of the optimal control problem in Example~\ref{E-continuous}:
	\begin{equation}
		\begin{aligned}
			& \underset{u(\cdot)\in  \LT}{\mathrm{min}}~
			J(u(\cdot))
			= \frac{1}{2}\int_{0}^{T}\left\{
			\abs*{x(t)}^2 + \abs*{\sin(u(t))}^2
			\right\} \dd t \\
			& \hspace{2em}\mathrm{s.t.} \hspace{2.5em}
			\left\{
			\begin{aligned}
				& \dot{x}(t)
				= u(t), \quad t\in[0,T], \\
				& x(0) = 0,
			\end{aligned}
			\right.
		\end{aligned}
	\end{equation}
	Here $\sin(u(t)) = (\sin(u_1(t)),\cdots,\sin(u_m(t)))$. This problem satisfies Assumptions~\ref{A1}--\ref{A2}, but $f_{uu}$ depends on $u$, so the additional condition in Theorem~\ref{TQCT} fails. Moreover, $J'':\LT\to\mathbb{L}(\LT)$ is discontinuous at the zero control, and $\bar{u(\cdot)} \equiv 0$ is an optimal control (see Example~\ref{E-continuous}). Let $\bar{x}(\cdot)$ and $\bar{p}(\cdot)$ be the corresponding state and adjoint variables, respectively. Then
	\begin{align*}
		H_{(x,u)^2}(t,\bar{x}(t),\bar{u}(t),\bar{p}(t))
		= I_{2m},
	\end{align*}
	where $I_{2m}$ is the identity matrix of order $2m$. Thus the second-order coercivity condition~\eqref{SSC} holds, and Newton's method (Algorithm~\ref{Algorithm-DOCP-Newton}) can be applied. In this example, we measure errors using $\norm{\cdot}_\infty$. We choose the initial control $u_i^0(t)\equiv\xi/(2T)$ ($i=1,\ldots,m$), where $\xi=\mathrm{rand}(1)\in(0,1)$ is a single random number shared by all components. For $T=1, m=10$, we obtain the following results:
	\begin{figure}[htbp]
		\centering
		\begin{minipage}[t]{0.48\textwidth}
			\centering
			\includegraphics[width=\textwidth]{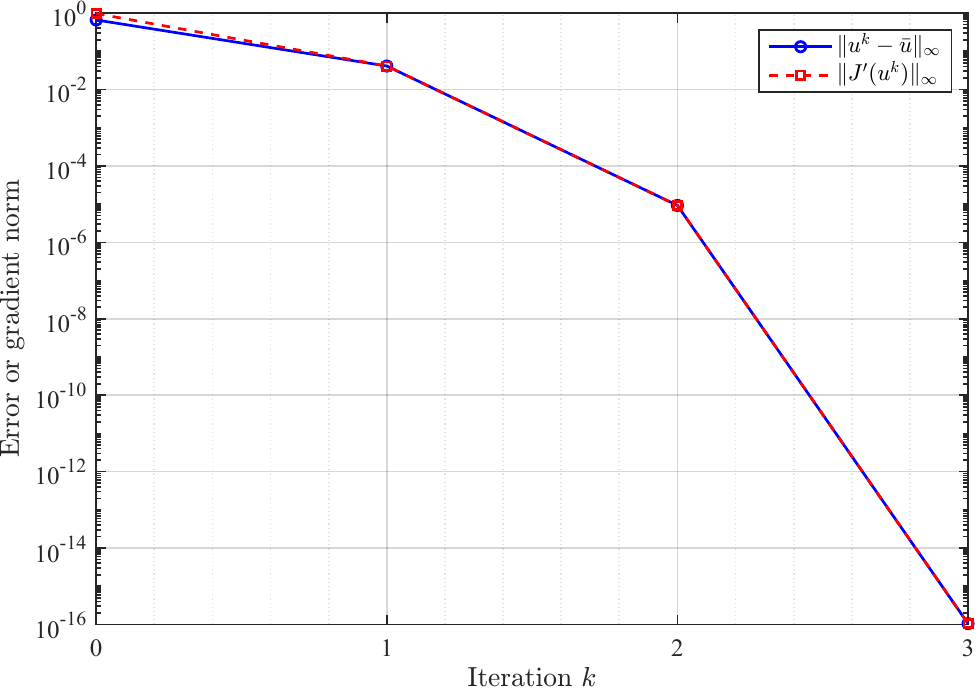}
			\caption{Example~\ref{ex1}. Control error and gradient norm ($T=1$, $m=10$).}
			\label{f1}
		\end{minipage}
		\hfill
		\begin{minipage}[t]{0.48\textwidth}
			\centering
			\includegraphics[width=\textwidth]{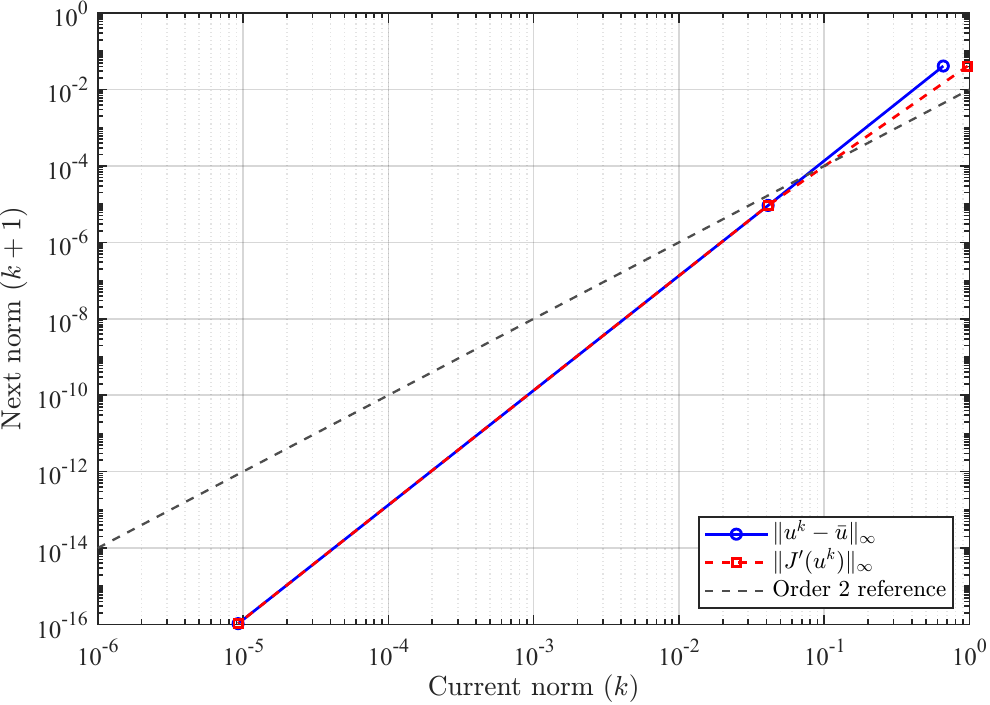}
			\caption{Example~\ref{ex1}. Successive errors and gradient norms ($T=1$, $m=10$).}
			\label{f2}
		\end{minipage}
	\end{figure}
	
	Figure~\ref{f1} presents semilogarithmic plots of the control error $\norm{u^k-\bar{u}}_\infty$ and the gradient norm $\norm{J'(u^k(\cdot))}_\infty$ against the iteration number. Both decrease rapidly and fall below $10^{-12}$ after $3$ Newton iterations. Figure~\ref{f2} shows log--log plots of successive control errors and gradient norms; the gray dashed line is a reference line of slope $2$. The observed decrease is consistent with quadratic or faster convergence, providing numerical support for the local quadratic convergence result. For $T=3, m=10$, the same accuracy is also reached after $3$ iterations, with similar convergence behavior shown in Figures~\ref{f3}--\ref{f4}:
	\begin{figure}[htbp]
		\begin{minipage}{0.48\textwidth}
			\centering
			\includegraphics[width=\textwidth]{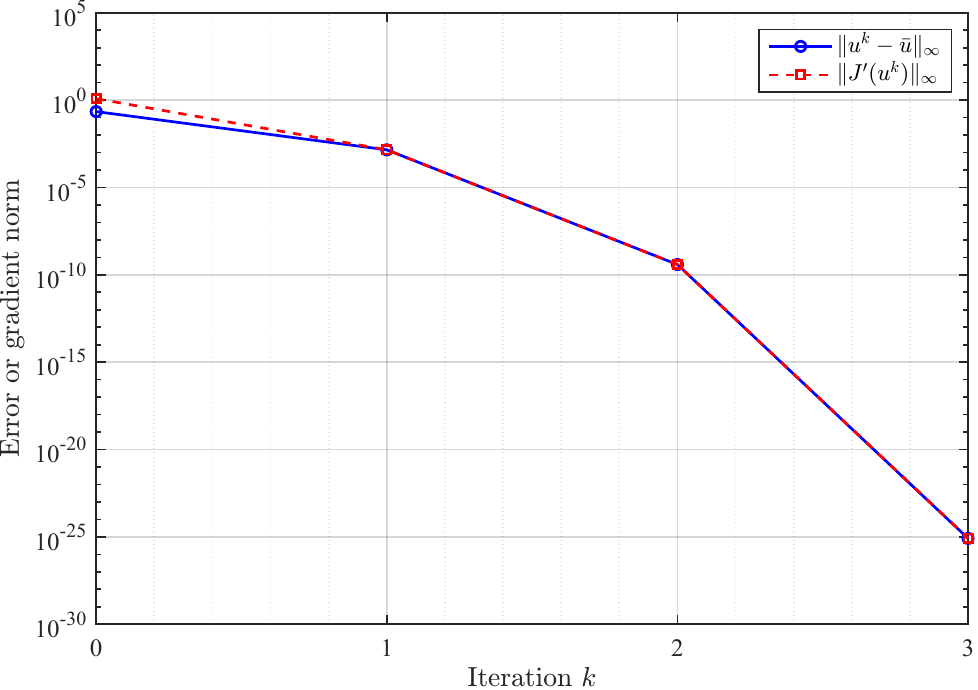}
			\caption{Example~\ref{ex1}. Control error and gradient norm ($T=3$, $m=10$).}
			\label{f3}
		\end{minipage}
		\hfill
		\begin{minipage}{0.48\textwidth}
			\centering
			\includegraphics[width=\textwidth]{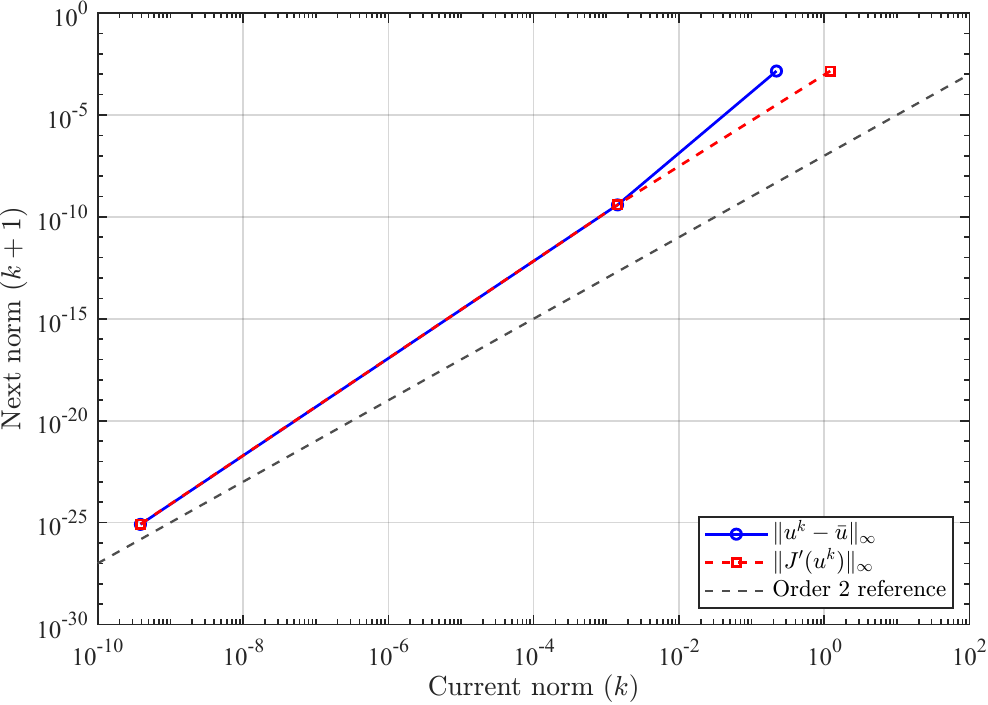}
			\caption{Example~\ref{ex1}. Successive errors and gradient norms ($T=3$, $m=10$).}
			\label{f4}
		\end{minipage}
	\end{figure}
\end{example}

We next consider an example to which Theorem~\ref{TQCT} applies.
\begin{example}\label{ex2}
	Let $T > 0$. Consider the following optimal control problem:
	\begin{equation}
		\begin{aligned}
			& \underset{u(\cdot)\in  \LT}{\mathrm{min}}~
			J(u(\cdot))
			= \frac{1}{2}\int_{0}^{T}\left\{
			\abs*{x(t)}^2 + \abs*{u(t)}^2
			\right\} \dd t \\
			& \hspace{2em}\mathrm{s.t.} \hspace{2.5em}
			\left\{
			\begin{aligned}
				& \dot{x}_i(t)
				= \sin(x_i(t)) + u_i(t), ~ i = 1,2,\cdots,m, \quad t\in[0,T], \\
				& x(0) = 0,
			\end{aligned}
			\right.
		\end{aligned}
	\end{equation}
	Here $x_i$ and $u_i$ denote the $i$th components of $x$ and $u$, respectively ($x,u\in\RR^m$). This problem satisfies Assumptions~\ref{A1}--\ref{A2}. Moreover, $b_{uu}=0$ and $f_{uu}=I_m$ are independent of $u$, so the additional structural condition in Theorem~\ref{TQCT} is satisfied. The control $\bar{u(\cdot)} \equiv 0$ is optimal. Let $\bar{x}(\cdot)$ and $\bar{p}(\cdot)$ be the corresponding state and adjoint variables, respectively. Then
	\begin{align*}
		H_{(x,u)^2}(t,\bar{x}(t),\bar{u}(t),\bar{p}(t))
		= I_{2m},
	\end{align*}
	where $I_{2m}$ is the identity matrix of order $2m$. Thus the second-order coercivity condition~\eqref{SSC} holds, and Newton's method can be applied. In this example, we measure errors using $\norm{\cdot}_2$. We choose the initial control $u_i^0(t)\equiv\xi/T^2$ ($i=1,\ldots,m$), with the same $\xi$ as in Example~\ref{ex1}. For $T=1, m=10$ and $T=3, m=10$, we obtain the following results:
	\begin{figure}[htbp]
		\centering
		\begin{minipage}[t]{0.48\textwidth}
			\centering
			\includegraphics[width=\textwidth]{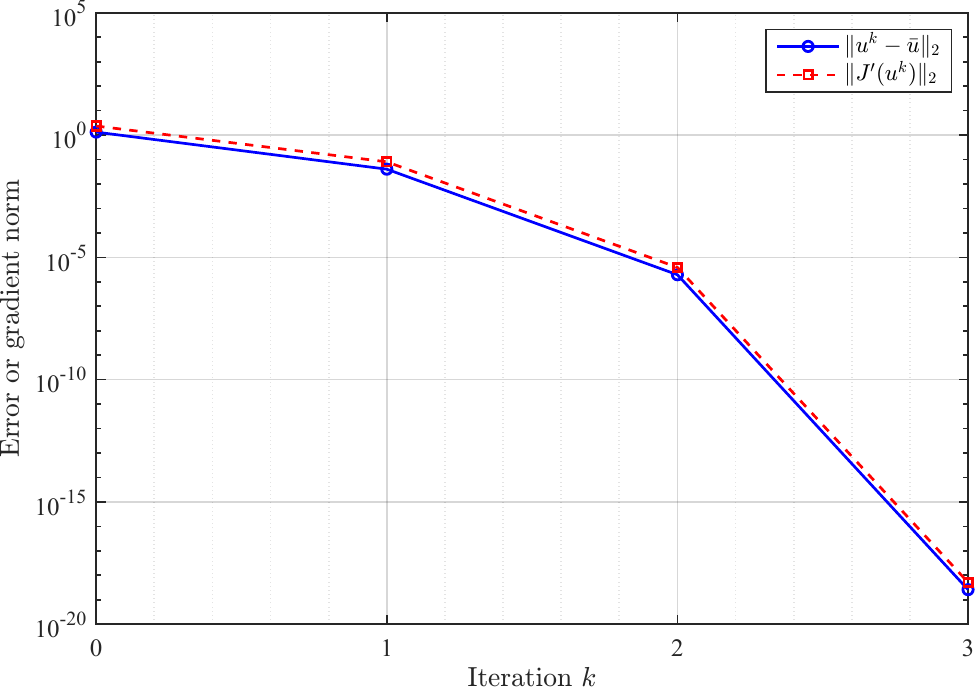}
			\caption{Example~\ref{ex2}. Control error and gradient norm ($T=1$, $m=10$).}
			\label{f5}
		\end{minipage}
		\hfill
		\begin{minipage}[t]{0.48\textwidth}
			\centering
			\includegraphics[width=\textwidth]{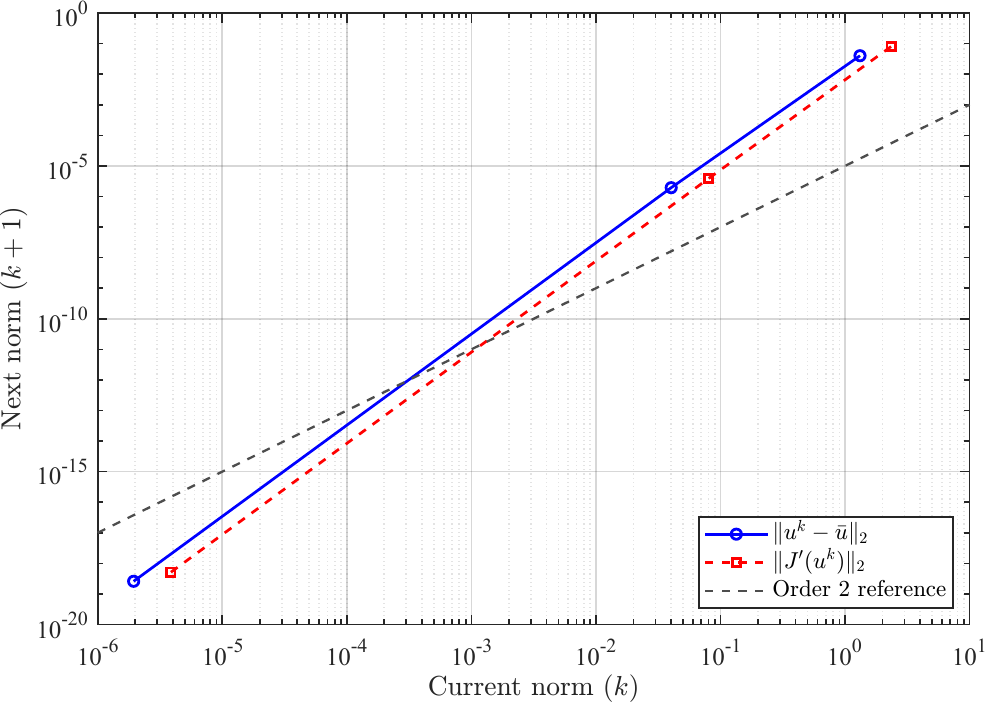}
			\caption{Example~\ref{ex2}. Successive errors and gradient norms ($T=1$, $m=10$).}
			\label{f6}
		\end{minipage}
	\end{figure}
	
	\begin{figure}[htbp]
		\centering
		\begin{minipage}[t]{0.48\textwidth}
			\centering
			\includegraphics[width=\textwidth]{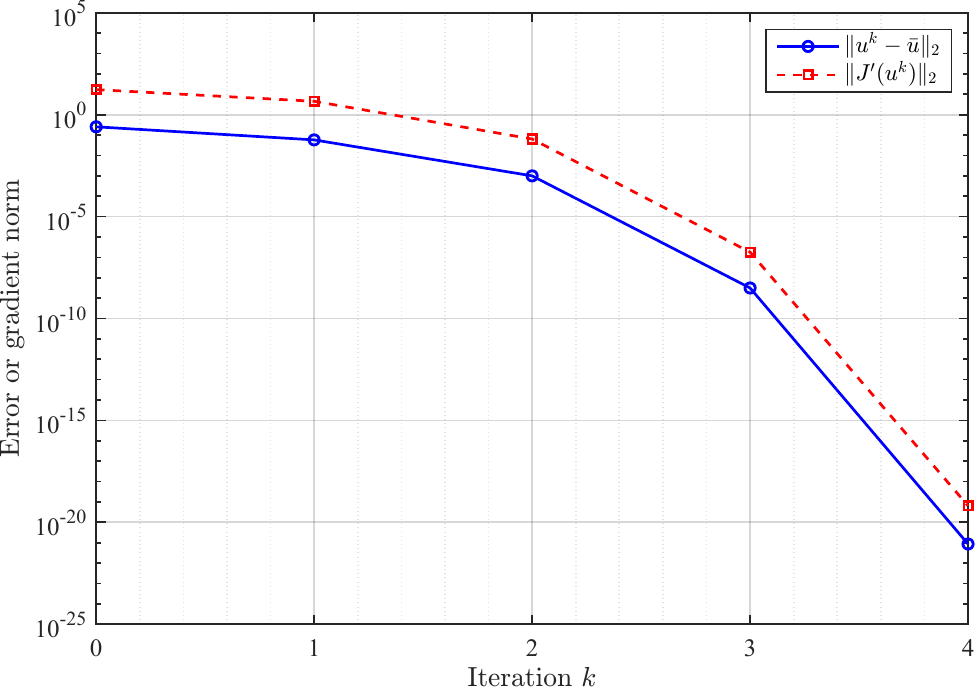}
			\caption{Example~\ref{ex2}. Control error and gradient norm ($T=3$, $m=10$).}
			\label{f7}
		\end{minipage}
		\hfill
		\begin{minipage}[t]{0.48\textwidth}
			\centering
			\includegraphics[width=\textwidth]{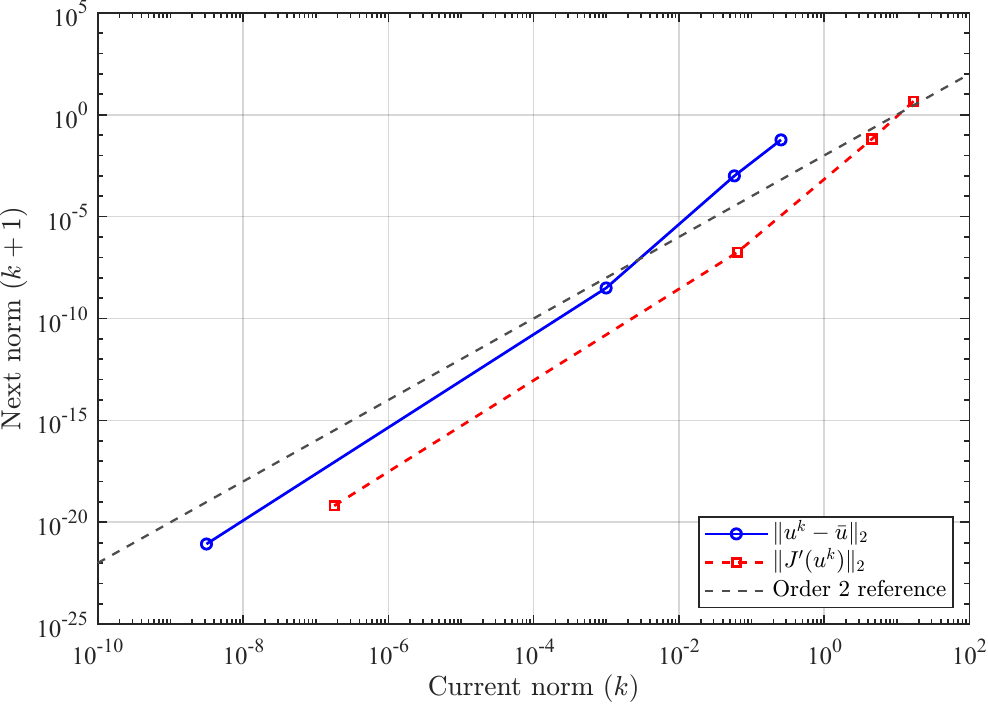}
			\caption{Example~\ref{ex2}. Successive errors and gradient norms ($T=3$, $m=10$).}
			\label{f8}
		\end{minipage}
	\end{figure}
	
	Figures~\ref{f5}--\ref{f8} show semilogarithmic plots of the control error $\norm{u^k-\bar{u}}_2$ and the gradient norm $\norm{J'(u^k(\cdot))}_2$, together with log--log plots of their values at successive iterations. For $T=1$ and $T=3$, both quantities fall below $10^{-12}$ after $3$ and $4$ Newton iterations, respectively. The log--log plots again show a trend consistent with quadratic or faster convergence, providing numerical support for the local quadratic convergence result in Theorem~\ref{TQCT}.
\end{example}

\newpage
\bibliographystyle{gbt7714-numerical}  
\bibliography{Reference}

@book{Anderson2007,
	title={Optimal control: linear quadratic methods},
	author = {Anderson, Brian D. O. and Moore, John B.},
	year={2007},
	publisher = {Dover Publications},
	address = {Mineola, NY},
	edition = {Augmented}
}

@article{Alt1990,
	author    = {Alt, Walter},
	title     = {The {Lagrange-Newton} method for infinite-dimensional optimization problems},
	journal   = {Numerical Functional Analysis and Optimization},
	year      = {1990},
	volume    = {11},
	number    = {3-4},
	pages = {201--224},
	doi = {10.1080/01630569008816371}
}

@article{Alt1994,
	author    = {Alt, Walter},
	title     = {Local convergence of the {Lagrange-Newton} method with applications to optimal control},
	journal   = {Control and Cybernetics},
	year      = {1994},
	volume    = {23},
	number    = {1-2},
	pages = {87--105}
}

@article{AltMalanowski1993,
	author    = {Alt, Walter and Malanowski, Kazimierz},
	title     = {The {Lagrange-Newton} method for nonlinear optimal control problems},
	journal   = {Computational Optimization and Applications},
	year      = {1993},
	volume    = {2},
	pages = {77--100},
	doi = {10.1007/bf01299143},
	number = {1}
}

@book{AtkinsonHan2005,
	author    = {Atkinson, Kendall and Han, Weimin},
	title = {Theoretical Numerical Analysis: A Functional Analysis Framework},
	year      = {2005},
	publisher = {Springer},
	address = {New York},
	doi = {10.1007/978-0-387-28769-0},
	edition = {2}
}

@article{Blanes2024,
	title = {Splitting methods for differential equations},
	author={Blanes, Sergio and Casas, Fernando and Murua, Ander},
	journal={Acta Numerica},
	volume={33},
	year={2024},
	doi = {10.1017/s0962492923000077},
	pages = {1--161}
}

@article{Boggs1995,
	author    = {Boggs, Paul T. and Tolle, Jon W.},
	title     = {Sequential quadratic programming},
	journal   = {Acta Numerica},
	year      = {1995},
	volume    = {4},
	pages = {1--51},
	doi = {10.1017/s0962492900002518}
}

@article{BonnansSilva2012,
	author    = {Bonnans, J. Fr{\'e}d{\'e}ric and Silva, Francisco J.},
	title     = {First and second order necessary conditions for stochastic optimal control problems},
	journal = {Applied Mathematics \& Optimization},
	year      = {2012},
	volume    = {65},
	pages = {403--439},
	doi = {10.1007/s00245-012-9162-4},
	number = {3}
}

@article{CasasMateos2026,
	title = {Quadratic convergence of an {SQP} method for some optimization problems with applications to control theory},
	author={Casas, Eduardo and Mateos, Mariano},
	journal={SIAM Journal on Control and Optimization},
	volume={64},
	number={3},
	pages={1127--1149},
	year={2026},
	publisher={SIAM},
	doi = {10.1137/25m176533x}
}

@inproceedings{Choi1989,
	title = {Efficient matrix-valued algorithms for solving stiff {Riccati} differential equations},
	author = {Choi, Chiu H. and Laub, Alan J.},
	booktitle = {Proceedings of the 28th IEEE Conference on Decision and Control},
	pages={885--887},
	year={1989},
	organization={IEEE},
	doi = {10.1109/cdc.1989.70248}
}

@article{Dieci1994,
	title = {Positive definiteness in the numerical solution of {Riccati} differential equations},
	author={Dieci, Luca and Eirola, Timo},
	journal={Numerische Mathematik},
	volume={67},
	number={3},
	pages={303--313},
	year={1994},
	publisher={Springer},
	doi = {10.1007/s002110050030}
}

@article{DontchevHagerPoore1995,
	title={Optimality, stability, and convergence in nonlinear control},
	author = {Dontchev, A. L. and Hager, W. W. and Poore, A. B. and Yang, Bing},
	journal = {Applied Mathematics \& Optimization},
	volume={31},
	number={3},
	pages={297--326},
	year={1995},
	publisher={Springer},
	doi = {10.1007/bf01215994}
}

@article{PalonmaresMagasatian1976,
	author    = {Garcia-Palomares, Ubaldo M. and Mangasarian, Olvi L.},
	title     = {Superlinearly convergent quasi-{Newton} algorithms for nonlinearly constrained optimization problems},
	journal   = {Mathematical Programming},
	year      = {1976},
	volume    = {11},
	number    = {1},
	pages = {1--13},
	doi = {10.1007/bf01580366}
}

@article{GobetGrangereau2022,
	author = {Gobet, Emmanuel and Grangereau, Maxime},
	title     = {{Newton} method for stochastic control problems},
	journal   = {SIAM Journal on Control and Optimization},
	year      = {2022},
	volume    = {60},
	number    = {5},
	pages = {2996--3025},
	doi = {10.1137/21m1408567}
}

@article{Han1976,
	author    = {Han, Shih-Ping},
	title     = {Superlinearly convergent variable metric algorithms for general nonlinear programming problems},
	journal   = {Mathematical Programming},
	year      = {1976},
	volume    = {11},
	number    = {1},
	pages = {263--282},
	doi = {10.1007/bf01580395}
}

@article{HehlNeitzel2024,
	title = {Local quadratic convergence of the {SQP} method for an optimal control problem governed by a regularized fracture propagation model},
	author={Hehl, Andreas and Neitzel, Ira},
	journal={ESAIM: Control, Optimisation and Calculus of Variations},
	volume={30},
	pages={68},
	year={2024},
	publisher={EDP Sciences},
	doi = {10.1051/cocv/2024052}
}

@article{Heinkenschloss1996,
	author    = {Heinkenschloss, Matthias},
	title     = {Projected sequential quadratic programming methods},
	journal   = {SIAM Journal on Optimization},
	year      = {1996},
	volume    = {6},
	number    = {2},
	pages = {373--417},
	doi = {10.1137/0806022}
}

@article{HeinkenschlossTroltzsch1998,
	title = {Analysis of the {Lagrange-SQP-Newton} method for the control of a phase field equation},
	author={Heinkenschloss, Matthias and Tr{\"o}ltzsch, Fredi},
	year = {1999},
	journal = {Control and Cybernetics},
	volume = {28},
	number = {2},
	pages = {177--211}
}

@article{HinzeKunisch2001,
	author    = {Hinze, Michael and Kunisch, Karl},
	title     = {Second order methods for optimal control of time-dependent fluid flow},
	journal   = {SIAM Journal on Control and Optimization},
	year      = {2001},
	volume    = {40},
	number    = {3},
	pages = {925--946},
	doi = {10.1137/s0363012999361810}
}

@article{HoppeNeitzel2021,
	author = {Hoppe, Fabian and Neitzel, Ira},
	title     = {Convergence of the {SQP} method for quasilinear parabolic optimal control problems},
	journal   = {Optimization and Engineering},
	year      = {2021},
	volume    = {22},
	number    = {4},
	pages = {2039--2085},
	doi = {10.1007/s11081-020-09547-2}
}

@article{ItoKunisch2000,
	author    = {Ito, Kazufumi and Kunisch, Karl},
	title     = {{Newton's} method for a class of weakly singular optimal control problems},
	journal   = {SIAM Journal on Optimization},
	year      = {2000},
	volume    = {10},
	number    = {3},
	pages = {896--916},
	doi = {10.1137/s1052623497320840}
}

@article{Kalman1960,
	title={Contributions to the theory of optimal control},
	author={Kalman, Rudolf Emil},
	journal = {Bolet{\'i}n de la Sociedad Matem{\'a}tica Mexicana (2)},
	volume={5},
	pages={102--119},
	year={1960}
}

@inproceedings{KellyKoppMoyer1963,
	author    = {Kelley, Henry J. and Kopp, Richard E. and Moyer, H. Gardner},
	title     = {A trajectory optimization technique based upon the theory of the second variation},
	booktitle = {Astrodynamics Conference},
	year      = {1963},
	doi = {10.2514/6.1963-415},
	note = {AIAA Paper 63-415},
	publisher = {American Institute of Aeronautics and Astronautics}
}

@article{Kleinman1968,
	title = {On an iterative technique for {Riccati} equation computations},
	author={Kleinman, David},
	journal={IEEE Transactions on Automatic Control},
	volume={13},
	number={1},
	pages={114--115},
	year={1968},
	publisher={IEEE},
	doi = {10.1109/tac.1968.1098829}
}

@incollection{KoppMcGill1964,
	author    = {Kopp, Richard E. and McGill, Robert},
	title     = {Several trajectory optimization techniques: {Part I}: Discussion},
	booktitle = {Computing Methods in Optimization Problems},
	publisher = {Academic Press},
	year      = {1964},
	pages = {65--89},
	doi = {10.1016/b978-1-4831-9812-5.50007-2}
}

@article{Kuvcera1973,
	title = {A review of the matrix {Riccati} equation},
	author={Ku{\v{c}}era, Vladim{\'\i}r},
	journal={Kybernetika},
	volume={9},
	number={1},
	pages={42--61},
	year={1973},
	publisher={Institute of Information Theory and Automation AS CR}
}

@article{Laub1979,
	title = {A {Schur} method for solving algebraic {Riccati} equations},
	author = {Laub, Alan J.},
	journal = {IEEE Transactions on Automatic Control},
	volume={24},
	number={6},
	pages={913--921},
	year={1979},
	publisher={IEEE},
	doi = {10.1109/tac.1979.1102178}
}

@article{Lang2015,
	title = {On the benefits of the {$LDL^{T}$} factorization for large-scale differential matrix equation solvers},
	author={Lang, Norman and Mena, Hermann and Saak, Jens},
	journal={Linear Algebra and its Applications},
	volume={480},
	pages={44--71},
	year={2015},
	publisher={Elsevier},
	doi = {10.1016/j.laa.2015.04.006}
}

@book{Machielsen1988,
	author    = {Machielsen, K. C. P.},
	title     = {Numerical solution of optimal control problems with state constraints by sequential quadratic programming in function space},
	year      = {1988},
	publisher = {Centrum voor Wiskunde en Informatica},
	address = {Amsterdam},
	series = {{CWI Tract}},
	number = {53}
}

@article{Malanowski1993,
	author    = {Malanowski, Kazimierz},
	title     = {Two-norm approach in stability and sensitivity analysis of optimization and optimal control problems},
	journal   = {Advances in Mathematical Sciences and Applications},
	year      = {1993},
	volume    = {2},
	number    = {2},
	pages = {397--443}
}

@incollection{Maurer1981,
	author    = {Maurer, Helmut},
	title     = {First and second order sufficient optimality conditions in mathematical programming and optimal control},
	booktitle = {Mathematical Programming at Oberwolfach},
	year      = {1981},
	pages = {163--177},
	doi = {10.1007/bfb0120927},
	series = {Mathematical Programming Studies},
	volume = {14},
	publisher = {Springer},
	address = {Berlin, Heidelberg}
}

@article{McGill1965,
	author    = {McGill, Robert},
	title     = {Optimum control, inequality state constraints, and the generalized {Newton-Raphson} algorithm},
	journal   = {Journal of the Society for Industrial and Applied Mathematics, Series A: Control},
	year      = {1965},
	volume    = {3},
	number    = {2},
	pages = {291--298},
	doi = {10.1137/0303021}
}

@article{Mitter1966,
	author    = {Mitter, Sanjoy K.},
	title     = {Successive approximation methods for the solution of optimal control problems},
	journal   = {Automatica},
	year      = {1966},
	volume    = {3},
	number    = {3-4},
	pages = {135--149},
	doi = {10.1016/0005-1098(66)90009-4}
}

@incollection{Powell1978,
	author    = {Powell, Michael J. D.},
	title     = {The convergence of variable metric methods for nonlinearly constrained optimization calculations},
	booktitle = {Nonlinear programming 3},
	publisher = {Academic Press},
	year      = {1978},
	pages = {27--63},
	doi = {10.1016/b978-0-12-468660-1.50007-4}
}

@article{Powell1978MP,
	author    = {Powell, Michael J. D.},
	title     = {Algorithms for nonlinear constraints that use {Lagrangian} functions},
	journal   = {Mathematical Programming},
	year      = {1978},
	volume    = {14},
	number    = {1},
	pages = {224--248},
	doi = {10.1007/bf01588967}
}

@article{SchleyLee1967,
	author    = {Schley, Charles H. and Lee, Imsong},
	title     = {Optimal control computation by the {Newton-Raphson} method and the {Riccati} transformation},
	journal   = {IEEE Transactions on Automatic Control},
	year      = {1967},
	volume    = {12},
	number    = {2},
	pages = {139--144},
	doi = {10.1109/tac.1967.1098542}
}

@article{SunLiYong2016,
	author    = {Sun, Jingrui and Li, Xun and Yong, Jiongmin},
	title     = {Open-loop and closed-loop solvabilities for stochastic linear quadratic optimal control problems},
	journal   = {SIAM Journal on Control and Optimization},
	year      = {2016},
	volume    = {54},
	number    = {5},
	pages = {2274--2308},
	doi = {10.1137/15m103532x}
}

@article{Tapia1974,
	author    = {Tapia, Richard A.},
	title     = {{Newton's} method for problems with equality constraints},
	journal   = {SIAM Journal on Numerical Analysis},
	year      = {1974},
	volume    = {11},
	number    = {1},
	pages = {174--196},
	doi = {10.1137/0711018}
}

@incollection{Tapia1978,
	author    = {Tapia, Richard A.},
	title     = {Quasi-{Newton} methods for equality constrained optimization: Equivalence of existing methods and a new implementation},
	booktitle = {Nonlinear programming 3},
	publisher = {Academic Press},
	year      = {1978},
	pages = {125--164},
	doi = {10.1016/b978-0-12-468660-1.50010-4}
}

@article{Troltzsch1999,
	author    = {Tr{\"o}ltzsch, Fredi},
	title     = {On the {Lagrange-Newton-SQP} method for the optimal control of semilinear parabolic equations},
	journal   = {SIAM Journal on Control and Optimization},
	year      = {1999},
	volume    = {38},
	number    = {1},
	pages = {294--312},
	doi = {10.1137/s0363012998341423}
}

@book{Troltzsch2010,
	title={Optimal control of partial differential equations: theory, methods, and applications},
	author={Tr{\"o}ltzsch, Fredi},
	volume={112},
	year={2010},
	publisher = {American Mathematical Society},
	doi = {10.1090/gsm/112},
	series = {Graduate Studies in Mathematics},
	address = {Providence, RI}
}

@book{YongZhou1999,
	author    = {Yong, Jiongmin and Zhou, Xun Yu},
	title     = {Stochastic Controls: {Hamiltonian} Systems and {HJB} Equations},
	year      = {1999},
	publisher = {Springer},
	address   = {New York},
	doi = {10.1007/978-1-4612-1466-3}
}

\end{document}